\newlength{\defbaselineskip}
\newcommand{\setlinespacing}[1]%
           {\setlength{\baselineskip}{#1 \defbaselineskip}}
\theoremstyle{plain}
\makeatletter\@addtoreset{equation}{section} \makeatother
\newtheorem{theorem}{Theorem}[section]
\newtheorem{lemma}[theorem]{Lemma}
\newtheorem{proposition}[theorem]{Proposition}
\newtheorem{corollary}[theorem]{Corollary}
\theoremstyle{definition}
\newtheorem{definition}[theorem]{Definition}
\theoremstyle{remark}
\newtheorem{remark}[theorem]{Remark}
\numberwithin{equation}{section}
\begin{document}
\title{\vspace{-12.6mm}Extended Mean Field Games with Singular Controls%\thanks{Financial support by the Berlin Mathematical School (BMS) and the SFB 649 {\sl Economic Risk} is gratefully acknowledged. We thank an anonymous referee for many comments and suggestions that greatly helped to improve the presentation of the results. }
}

%    Information for first author
\author{Guanxing Fu\footnote{Department of Applied Mathematics, The Hong Kong Polytechnic University,
         Hung Hom, Kowloon, Hong Kong; email: guanxing.fu@polyu.edu.hk\newline The author would like to thank the anonymous Associate Editor and two anonymous reviewers, for their great and valuable comments and suggestions. All of them improve the quality of the paper. Moreover, the author would like to thank all people who made great efforts to fund him during the tough period in Covid-19 when this work was completed, including Min Dai, Ulrich Horst, Defeng Sun and Chao Zhou.
 %Financial support through start-up fund P0035348 from The Hong Kong Polytechnic University as well as NSFC Grant No. 12101523 is acknowledged.    
 }}
%    Address of record for the research reported here
%\address{Department of Mathematics, Humboldt-Universit\"at zu Berlin
%         Unter den Linden 6, 10099 Berlin, Germany.}
%
%\email{ fuguanxing725@gmail.com (Guanxing Fu), horst@math.hu-berlin.de (Ulrich Horst)}
\date{}
\maketitle

\vspace{-10.6mm}
\begin{abstract}
This paper establishes the existence of equilibria result of a class of mean field games with singular controls. The interaction takes place through both states and controls. A relaxed solution approach is used. To circumvent the tightness issue, we prove the existence of equilibria by first considering the corresponding mean field games with continuous controls instead of singular controls and then taking approximation.
\end{abstract}

\vspace{-2.6mm}
{\bf AMS Subject Classification:} 93E20, 91B70, 60H30.

{\bf Keywords:}{~mean field game, singular control, relaxed control, Skorokhod $M_1$ topology.}

\vspace{-6.6mm}
\section{Introduction}

\vspace{-5.6mm}
Mean field games (MFGs), introduced by \cite{HMC-2006} and \cite{LL-2007}, provide a powerful tool to study approximate Nash equilibria in symmetric large population games, where the interaction only takes place in the empirical distribution of all players' states or strategies. {The methodology is to first approximate the empirical distribution by an exogenously given distribution, and to consider the standard optimization problem of a representative player, and then to search for the fixed point such that the distribution of the representative player’s optimal state or strategy is identical to the given one}; for details, we refer to the monograph \cite{CD-2018ab}. By now, most works study MFGs with absolutely continuous strategies (regular controls) and there are limited results on MFGs with singular controls. Among them,
MFGs with singular controls were first studied in \cite{FH-2017}, where the existence of equilibria result for a class of MFGs was established using a relaxed solution approach. By studying the quasi-variational inequalities, \cite{Cao2020} and \cite{GX-2018} characterized the equilibria of MFGs with singular controls in infinite horizons. {\cite{Hu-Oksendal-Sulem-2017} examined a class of mean field type games with singular controls by maximum principle approach.} The recent work \cite{Campi2020} considered an MFG with finite fuel arising in goodwill problems. By using the connection with optimal stopping problems, \cite{Campi2020} obtained the equilibrium iteratively. 
 In all these papers, the interaction takes place only through states. In our paper, motivated by the optimal portfolio liquidation problem and optimal exploitation of exhaustible resource (see Section \ref{sec:motivation}) we introduce a novel class of \textit{extended} MFGs with singular controls where the interaction takes place through both states and actions
\begin{equation}\label{general-MFG}
\left\{ \begin{split}
1. &~\textrm{For fixed probability measures }\mu:=(\mu^{(1)},\mu^{(2)},\mu^{(3)},\mu^{{4}},\mu^{(5)})\textrm{ in some suitable space,}\\
&~\textrm{solve the optimization problem}: \textrm{minimize }J(u,Z^{(1)},Z^{(2)};\mu) \textrm{ such that}\\
&~dX^{(i)}_t=b^{(i)}(t,X^{(i)}_t,\mu^{(i)}_t)\,dt+d(\kappa^{(i)}\overline\mu^{(i)}_t+\eta^{(i)}{Z^{(i)}_t})+\sigma^{(i)}(t)dW^{(i)}_t,~i=1,2, \textrm{ and }\\
&~dX^{(3)}_t=b^{(3)}(t,X^{(3)}_t,u_t)\,dt+\alpha^{(1)} dZ^{(1)}_t-\alpha^{(2)}dZ^{(2)}_t+l^{}(t,u_t)\widetilde N^{}(dt).\\
2.&~\textrm{Search for the fixed point }\mu=(\mathcal L (Z^{(1)}),\mathcal L(Z^{(2)}),\mathcal L(X^{(1)}),\mathcal L(X^{(2)}),\mathcal L(X^{(3)})),\\
&~\textrm{where }Z^{(1)},~Z^{(2)}\textrm{ and }X^{(1)},~X^{(2)},~X^{(3)}\textrm{ are the optimal controls and states from 1},
\end{split}\right.
\end{equation}
where the cost functional follows

\vspace{-5.6mm}
\begin{equation}\label{cost-after-1.1}
	\begin{split}
		&~J(u,Z^{(1)},Z^{(2)};\mu)\\
		=&~\mathbb E\left[\int_0^T\sum_{i=1}^2 h(X^{(i)}_s)\cdot \,d(\kappa^{(i)}\overline\mu^{(i)}_s+\eta^{(i)}{Z_s^{(i)}})^c+\sum_{i=1}^2\sum_{0\leq t\leq T}\sum_{j=1}^d\int_0^{\Delta X^{(i)}_{j,t}}h_j(X^{(i)}_{j,t-}+x)\,dx\right.\\
%		&~\qquad\qquad+\int_0^Th(X^{(2)}_s)\cdot\,d(\kappa^{(2)}\overline\mu^{(2)}_s+\eta^{(2)}{Z_s^{(2)}})^c+\sum_{0\leq t\leq T}\sum_{j=1}^d\int_0^{\Delta X^{(2)}_{j,t}}h_j(X^{(2)}_{j,t-}+x)\,dx\\
		&~\qquad\qquad\left.+\int_0^Tf(t,X^{(1)}_t,X^{(2)}_t,X^{(3)}_t,\mu_t,u_t)\,dt+g(X^{(1)}_T,X^{(2)}_T,X^{(3)}_T,\mu_T)\right].
	\end{split}
\end{equation}

\vspace{-4.6mm}
In \eqref{general-MFG} and \eqref{cost-after-1.1}, $\widetilde N$ is a compensated Poisson process with intensity function $\lambda$, $W^{(1)}$ and $W^{(2)}$ are two Brownian motions defined on some probability space, $u$ is the regular control, $Z^{(1)}$ and $Z^{(2)}$ are singular controls, whose trajectories are c\`adl\`ag and non-decreasing, $X^c$ stands for the continuous part of $X$, $\Delta X_t$ is the jump of $X$ at $t$, $\overline\mu^{(i)}$ is the first moment of $\mu^{(i)}$, $i=1,2$ and $\mathcal L(\cdot)$ is the law of $\cdot$.

\vspace{-1.6mm}
 Differently from standard MFGs, where the interaction is only through the states, the interaction in \eqref{general-MFG} takes place not only through the states $X^{(1)}$, $X^{(2)}$ and $X^{(3)}$, but also through the singular controls $Z^{(1)}$ and $Z^{(2)}$. Extended MFGs were analyzed in \cite{CL-2015,Gomes2013} by using probabilistic and analytical approaches, respectively.
 We apply the relaxed solution method (probabilistic compactification method)\footnote{The relaxed solution method we apply is in the sense of \cite{L-2015}, which is different from \cite{BDT-2018}.} to establish the existence of equilibria result. 
The relaxed solution method was first applied to MFGs in \cite{L-2015}. Later it was used to prove existence of equilibria for MFGs with controlled jumps \cite{BCP-2019,BCP-2017}, MFGs with absorptions where the interaction takes place through the empirical distribution of players remaining in the game \cite{Campi2018}, and through both surviving players and past absorptions \cite{Campi2019}, MFGs with common noise \cite{CDL-2016}, MFGs with finite states \cite{Cec:Fis-2017} and MFGs with singular controls \cite{FH-2017}. {The idea is to work with Berge's maximum theorem together with Kakutani-Fan-Glicksberg fixed point theorem; first, establish the closed graph property of the representative player's best response correspondence to a given $\mu$ by the former theorem, and then use the latter theorem to prove the best response correspondence admits a fixed point, which turns out to be a solution to the MFG}; for details, one can refer to the lecture note \cite{Lacker2018}.

\vspace{-1.6mm}
The MFG with Poisson jumps while without singular controls was studied in \cite{BCP-2017}. However, the existence of singular controls makes our problem essentially different from \cite{BCP-2017}. In particular, the Skorokhod $J_1$ topology used in \cite{BCP-2017} does not work for \eqref{general-MFG}. Motivated by \cite{FH-2017} (one dimensional case)  and \cite{FH-2019} (multidimensional case), we work with the Skorokhod $M_1$ topology, {which is weaker than $J_1$ and stronger than the widely used Meyer-Zheng topology (see e.g. \cite{Dianetti2020,Li2017}, where Meyer-Zheng topology was used to approximate controls of finite variations by Lipschitz continuous controls when studying stochastic games/controls of singular type), because (1) the set of bounded monotone functions is compact in the $M_1$ topology but not in the $J_1$ topology; (2) the $M_1$ topology allows for convergence of functions with unmatched jumps; (3) $M_1$ topology is metrizable with explicit metric while the metric for Meyer-Zheng topology is not explicit. Thus, one cannot bound the value of a trajectory at each time point by the corresponding metric. These three properties are essential to prove the existence of equilibria result of \eqref{general-MFG}}. Loosely speaking, there are two $M_1$ topologies, the strong one and the weak one. They coincide  with each other for one dimensional paths, which are the usual objectives in the literature; see \cite{BBF-2019,DIRT-2015,FH-2017,NM-2019} among others. For multidimensional paths the weak $M_1$ topology has an advantage over the strong $M_1$ topology since the oscillation function for weak $M_1$ is always $0$ for monotone paths; see \cite{BK-2015,Cohen-2019,FH-2019}. So in this paper, by $M_1$ topology we always mean the weak $M_1$ topology unless otherwise stated. For the detailed definition and properties of weak $M_1$ topology, we refer to the book \cite[Chapter 12]{Whitt-2002}; see also the recent interesting work \cite[Section 3]{Cohen-2019} for a summary, where Cohen highlighted the advantange of $M_1$ topology over $J_1$ topology by proving the existence of optimal controls for a class of singular control problems with state and control constraint by a simpler proof than \cite{Budhiraja2006}.

\vspace{-1.6mm}
Due to the c\`adl\`ag regularity of singular controls, the relaxed solution method, which might not work for extended MFGs with regular controls as in \cite{CL-2015}, still works for \eqref{general-MFG}. Consequently, the states in \eqref{general-MFG} are allowed to be degenerate. The property of degeneracy is important in applications; see Section \ref{sec:motivation} and \cite{Campi2020}. 
{Although in \cite{FH-2017} we solved the MFGs with singular controls with relaxed solution method, singular controls enter \eqref{general-MFG} in a different way from \cite{FH-2017}. In \cite{FH-2017} singular controls enter the game through the form of $\int_0^\cdot c_s\,dZ_s$, no matter in the state or in the cost. Since $\int_0^\cdot c_s\,dZ_s=\int_0^\cdot c^+_s\,dZ_s-\int_0^\cdot c^-_s\,dZ_s$ and $c^+c^-\equiv 0$, simultaneous jumps never occur. However, $Z^{(1)}$ and $-Z^{(2)}$ in \eqref{general-MFG} may jump at the same as time and in different directions. Such feature does not appear in \cite{FH-2017}.}
One difficulty of our paper comes from the possible simultaneous jumps in different directions. It is well-acknowledged that the Skorokhod space endowed with the $M_1$ topology is not a vector space, in the sense that if $x_n\rightarrow x$ and $y_n\rightarrow y$ in the $M_1$ topology, it is not necessarily true that $x^n+y^n\rightarrow x+y$ in the $M_1$ topology. One possible condition to make it true is $x$ and $y$ do not admit simultaneous jumps in different directions, i.e., $\Delta x_t\Delta y_t\geq 0$. However, this is not our case because of the simultaneous jumps in different directions of singular controls and the Poisson integral, which make it difficult to establish convergence and relative compactness results under the $M_1$ topology. To overcome this difficulty, we follow a two-step strategy: in step 1, instead of considering $Z^{(1)}$ and $Z^{(2)}$ we consider their continuous counterparts $k\int_{\cdot-1/k}^\cdot Z^{(1)}_s\,ds$ and $k\int_{\cdot-1/k}^\cdot Z^{(2)}_s\,ds$. The resulting MFG indexed by $k$ has only one jump process, the Poisson process, and hence it can be analyzed by using the $M_1$ topology. Although in this step the $J_1$ topology works as well, we prefer to proceed with the $M_1$ topology because the approximation in step 2 requires the use of the $M_1$ topology, in which we show the sequence of equilibria indexed by $k$ from step 1 helps construct an equilibrium of \eqref{general-MFG} by approximation. The approximation from step 1 to step 2 holds only under the $M_1$ but not the $J_1$ topology as the $M_1$ topology allows convergence of unmatched jumps. Note that the approximant $k\int_{\cdot-1/k}^\cdot Z_s\,ds$ was also used in \cite{FH-2017}, where we established a relationship between MFGs with singular controls and MFGs with regular controls. {Even so, our paper is not an immediate generalization from \cite{FH-2017}. First, the approximating sequence in \cite{FH-2017} was proved to be relatively compact while in the current paper the approximating sequence associated with $X^{(3)}$ can never be expected to be relatively compact, because of \textit{the simultaneous jumps of singular controls in different directions.} Instead of struggling with the relative compactness issue of $X^{(3)}$, we search for a candidate of Nash equilibrium by construction through nested transformation of probability spaces; see Section \ref{sec:approximation}. Second, }in \cite{FH-2017}, we assumed processes did not admit jumps at the terminal time $T$. In the current paper we drop this assumption by considering a slightly different MFG on a possibly larger horizon in step 1 and assume the coefficients to be trivially extended to this larger horizon in step 2. {Here we should emphasize that the trick of enlargement of the time horizon and trivial extension of the coefficients \textit{does not} reduce the generality of our problem at all. We prove this point by showing the limit in step 2 is supported on the original space and the martingale property is satisfied; see Lemma \ref{lem:stability-martingale-2}. In addition to the simultaneous jumps, we complement the MFG literature by introducing and solving a new MFG with singular controls where the interaction takes place through strategies. The strategic interaction makes the problem with \textit{general} singular controls difficult, even if the interaction only takes place through the first moment. In order to get the relative compactness result with general singular controls, we need a uniform bound of the sequence of laws obtained from Section \ref{proof}. We achieve the goal by doing a fine estimate of the upper bound and the lower bound of the state and the cost; see Lemma  \ref{lem:uniform-bound}.} 

\vspace{-2.1mm}
The remainder is organized as follows: we introduce two motivating examples in Section \ref{sec:motivation}. In Section \ref{sec:model-setup} we introduce the model setup and two main results: the existence of equilibria result of \eqref{general-MFG} with finite fuel, and the existence of equilibria result of \eqref{general-MFG} with general singular controls under additional coercive assumptions of the coefficients.
The proofs  are given in Section \ref{proof} and Section \ref{sec:general-control}, respectively.  \\[-3.66em]

%\vspace{-8.9mm}
\section{Motivation}\label{sec:motivation} 

 \vspace{-5.6mm}  {Game theory is the study of mathematical models of {\it strategic interactions} among rational players. In this section, we introduce two examples of MFGs with strategic interaction, which motivate our study of the general MFG \eqref{general-MFG}.}
 
  \vspace{-1.6mm}
\textbf{\large{2.1\quad Optimal Portfolio Liquidation}}
%\subsection{Optimal Portfolio Liquidation}\label{sec:liquidation}

\vspace{-2.6mm}
In classic liquidation models, a large trader would like to unwind her open position by submitting market orders in blocked shape into the order book. Due to the limited liquidity, the large orders would move the order book in an unfavorable direction, making the immediate execution costly. However, slow trading may result in high inventory risk due to the market uncertainty.  Thus, the trader needs to make a decision of the trading rate in order to minimize her trading cost (or maximize her net profit). One can refer to \cite{BBF-2019,HN-2014} among others for liquidation with singular controls. Recently, liquidation models beyond single player especially MFGs of optimal liquidation have drawn a lot of attentions; see e.g.  \cite{CL-2016,Casgrain-2018,Casgrain2018a,FGHP-2018,FH-2018,FHX-2020,HJN-2015}, the common nature of which is that the trading price is influenced not only by the individual trader's strategy but also by the aggregation of the competitors' strategies. 

\vspace{-1.6mm}
So far the literature on MFGs of optimal liquidation is focused on absolutely continuous strategies. {However, in e.g. cryptocurrency market, an initial block-shaped execution is often observed, and absolutely continuous strategies are not appropriate to model such phenomenon.}  In the first example, we introduce a model of optimal portfolio liquidation with singular controls, which is a variant of \cite{HN-2014}. Instead of describing the trading price, it is more convenient to consider the spread directly. Following \cite{HN-2014}, we assume the buy spread $X^{(1)}$ and the sell spread $X^{(2)}$ of a representative player follow the dynamics

\vspace{-5.6mm}
\begin{equation}\label{buy-spread}
    X_t^{(i)}=\chi^{(i)}-\int_0^t\rho^{(i)} X^{(i)}_s\,ds+\kappa^{(i)}\nu^{(i)}_t+\eta^{(i)}Z^{(i)}_t+\int_0^t\sigma^{(i)}(s)\,dW^{(i)}_s,\quad i=1,2.
\end{equation}

\vspace{-3.6mm}
%respectively,
%\begin{equation}\label{sell-spread}
%X^{(2)}_t=\chi^{(2)}-\int_0^t\rho^{(2)} X^{(2)}_s\,ds+\kappa^{(2)}\nu^{(2)}_t+\eta^{(2)}Z^{(2)}_t+\int_0^t\sigma^{(2)}(s)\,dW^{(2)}_s.
%\end{equation}
Here, $Z^{(1)}$ and $Z^{(2)}$ are the accumulative market buy and sell orders until time $t$, respectively, $\nu^{(1)}$ and $\nu^{(2)}$ are the aggregated (mean-field) market buy and sell orders of competitors. {It reflects the fact that market dynamics is the aggregation of other market participants.} We assume players trade different stocks and the aggregation of strategies influences the representative player's spread through a spillover effect. $\rho^{(1)}$ and $\rho^{(2)}$ describe the resilience of the order book. In addition to submitting market orders in the traditional venue,  the representative player also submits passive orders into the dark pool, {where the execution cost is smaller than that in the traditional venue. However,}
the execution is uncertain. The execution times are described by a Poisson process $N$; {the occurance of jumps of $N$ corresponds to the occurance of order executions}. Thus, the current position $X^{(3)}$ follows

\vspace{-6.6mm}
\begin{equation}\label{position}
    X^{(3)}_t=\chi ^{(3)}+Z^{(1)}_t-Z^{(2)}_t+\int_0^tu_s\,dN_s,
\end{equation}

\vspace{-5.1mm}
where $\chi^{(3)}$ is the initial position of the representative player and $u$ is the net amount of passive orders submitted into the dark pool.  

\vspace{-1.6mm}
By using stategies $Z^{(1)}$ and $Z^{(2)}$, following \cite{HN-2014} the liquidity cost together with the cost crossing the spread is
$
	\int_0^T\left(X^{(1)}_{s-}+\frac{1}{2}\Delta(\kappa^{(1)}\nu^{(1)}_s+\eta^{(1)} Z^{(1)}_s)\right)\,dZ^{(1)}_s+	\int_0^T\left(X^{(2)}_{s-}+\frac{1}{2}\Delta(\kappa^{(2)}\nu^{(2)}_s+\eta^{(2)} Z^{(2)}_s)\right)\,dZ^{(2)}_s,
$
and the cost of spillover effect is assumed to be
$
	\frac{\kappa^{(1)}}{\eta^{(1)}}\int_0^T\left(X^{(1)}_{s-}+\frac{1}{2}\Delta(\kappa^{(1)}\nu^{(1)}_s+\eta^{(1)} Z^{(1)}_s)\right)\,d\nu^{(1)}_s+	\frac{\kappa^{(2)}}{\eta^{(2)}}\int_0^T\Big(X^{(2)}_{s-}+\frac{1}{2}\Delta(\kappa^{(2)}\nu^{(2)}_s+\eta^{(2)} Z^{(2)}_s)\Big)\,d\nu^{(2)}_s.
$
The ratio coefficients $\kappa^{(1)}/\eta^{(1)}$ and $\kappa^{(2)}/\eta^{(2)}$, coming from the dynamcis of the spreads, reflect the weight of influence between the aggregation and the individual strategy. The cost to minimize is given by

\vspace{-8.6mm}
\begin{equation}\label{cost-liquidation}
	\begin{split}
    &~\mathbb E\left[\int_0^T\left(X^{(1)}_{s-}+\frac{1}{2}\Delta(\kappa^{(1)}\nu^{(1)}_s+\eta^{(1)}{Z^{(1)}_s})\right)\,d(\kappa^{(1)}\nu^{(1)}_s+\eta^{(1)}{Z^{(1)}_s})\right.\\
    &~\left.+\int_0^T\left(X^{(2)}_{s-}+\frac{1}{2}\Delta(\kappa^{(2)}\nu^{(2)}_s+\eta^{(2)}{Z^{(2)}_s})\right)\,d(\kappa^{(2)}\nu^{(2)}_s+\eta^{(2)}{Z^{(2)}_s}){+\int_0^T(X^{(1)}_s-S^{(1)}_s)^2\,ds}\right.\\
    &~\left.{+\int_0^T(X^{(2)}_s-S^{(2)}_s)^2\,ds}+\int_0^T\lambda_s (X^{(3)}_s)^2\,ds+\int_0^T\gamma_su_s\,ds+\varrho (X^{(3)}_T)^2\right].
	\end{split}
\end{equation}

\vspace{-5.6mm}
The quadratic terms $\int_0^T\lambda_s (X^{(3)}_s)^2\,ds$ and $\varrho (X_T^{(3)})^2$ are inventory penalization, the term $\int_0^T\gamma_su_s\,ds$ is the cost arising from adverse selection, {the two terms $\int_0^T(X^{(i)}_s-S^{(i)}_s)^2\,ds$ $(i=1,2)$ are the penalization of deviation from the price signals $S^{(i)}$ $(i=1,2)$, which are assumed to be deterministic and c\`adl\`ag. The tracking of the price signals $S^{(i)}$ $(i=1,2)$ reflects the inverstor's anticipation of the market.	When tracking the c\`adl\`ag signals, intermediate and simultaneous jumps of $X^{(i)}$ (i=1,2) and thus of $Z^{(i)}$ (i=1,2) may happen.}
The goal is to find an equlibrium of the following MFG, which is a special case of \eqref{general-MFG}:
\begin{equation*}
	\left\{\begin{split}
		1.&~\textrm{Fix }(\nu^{(1)},\nu^{(2)})\textrm{ in some suitable space and minimize \eqref{cost-liquidation} subject to \eqref{buy-spread}-\eqref{position}};\\
		2.&~\textrm{Search for the fixed point }(\nu^{(1)},\nu^{(2)})=(\mathbb E[Z^{(1)}],\mathbb E[Z^{(2)}]),\\
		&~\textrm{ where } Z^{(1)}\textrm{ and }Z^{(2)}\textrm{ are the best response to }(\nu^{(1)},\nu^{(2)})\textrm{ in 1.}
	\end{split}\right.
\end{equation*}

\vspace{-2.6mm}
\textbf{\large{2.2\quad Optimal Exploitation of Exhaustible Resources}}
%\subsection{Optimal Exploitation of Exhaustible Resources}\label{sec:resource}

\vspace{-2.6mm}
In the second example, we consider an MFG of optimal exploitation of exhaustible resource. A model with infinite horizon and without game nature was introduced in \cite{Ferrari2018}, {where Ferrari and Koch studied a single player's optimal extraction problem by solving a two-dimensional degenerate singular control problem with finite fuel via a combination of calculaus of variation established in \cite{BBF-2018} and the standard approach}. We introduce a model among infinite players with mean-field interaction and finite horizon. {In the model, each player is endowed with limited amount of exhaustible resource, such as earth minerals, metal ores and fossil fuels, to exploit for sale.} %The problem describes how an energy company determines its optimal exploitation strategy under competition. 

{Let $X_t$ be the reservoir of the resource at time $t$, and $Z_t$ be the accumulative amount of exploitation until time $t$. Thus,}
\begin{equation}\label{reservoir}
	X_t=x-Z_t,
\end{equation}
{where $x$ is the initial reservior.}
{One character of problems with exhaustible resource is that $X_t$ cannot be negative due to its ecnomic meaning. One way to address this issue is to add an absorption boundary at $0$: the player drops out of the game once $X_t=0$; see \cite{PUR-2021}. Another way to address this issue is to incorporate singular controls with finite fuels, i.e., $Z_t$ is assumed to be valued in $[0,x]$ for each $t$. } \\[-2em] 

The market price of the resource is determined by three parts: the first part is generated from the market itself and noise traders. It is assumed to be a mean-reverting process. When there is no exploitation activity, the price would recover to the mean level. The second part comes from the player's sales. Once selling, the price is moved in an undesirable direction due to illiquidity of the exhaustible resource. The third part arises from alternative resource. Any exploitation of alternative resource would make the price of the exhaustible resource decline. We assume the price impact to be in a linear form. Hence, the actual market price of the exhaustible resource follows 
%\begin{equation}\label{price-resource}
	$dP_t=(a-bP_t)\,dt+\sigma\,dW_t-\eta\,dZ_t-\kappa\,d\nu_t,$
%\end{equation}
where $\nu$ is the aggregated (mean field) exploitation of alternative resource. The goal for the representative player is to maximize the profit

\vspace{-6.6mm}
\begin{equation}\label{profit}
	%\begin{split}
	\mathbb E\left[\int_0^TP_t\,d(\eta Z_t+\kappa\nu_t)^c+\sum_{0\leq t\leq T}\int_0^{ \Delta(\eta Z_t+\kappa\nu_t) }(P_{t-}-x)\,dx\right],
	%\end{split}
\end{equation}
where $(\eta Z+\kappa \nu)^c$ is the continuous part of $\eta Z+\kappa\nu$. To maximize \eqref{profit} is equivalent to minimize
\begin{equation}\label{cost-resource}
	%\begin{split}
	\mathbb E\left[\int_0^T\overline P_t\,d(\eta Z_t+\kappa\mu_t)^c+\sum_{0\leq t\leq T}\int_0^{\Delta(\eta Z_t+\kappa\nu_t) }(\overline P_{t-}+x)\,dx\right],
	%\end{split}
\end{equation}
where
\begin{equation}\label{minus-price}
	d\overline P_t=(-a-b\overline P_t)\,dt-\sigma\,dW_t+\eta\,dZ_t+\kappa\,d\nu_t.
\end{equation}
Therefore, the following MFG is a special case of \eqref{general-MFG}:
\begin{equation*}
	\left\{\begin{split}
		1.&~\textrm{Fix }\nu\textrm{ in some suitable space and minimize }\eqref{cost-resource}\textrm{ subject to }\eqref{reservoir}\textrm{ and }\eqref{minus-price};\\
		2.&~\textrm{Search for the fixed point }\nu=\mathbb E[Z],\textrm{ where }Z\textrm{ is the optimal control from 1}.
	\end{split}\right.
\end{equation*}
\vspace{-7.6mm}

{The analysis in our forthcoming paper \cite{FHX2021} yields a characterization of the equilibrium for a modified version of the MFG in Section 2.1 without passive orders but with random volatility $\sigma$ and liquidation constraint. In this paper, we are motivated by MFGs in Section 2 to study a general class of MFGs \eqref{general-MFG}.}

\vspace{-6.6mm}
\section{Extended MFGs with Singular Controls}\label{sec:model-setup}

\vspace{-5.6mm}
%\paragraph{Notation: space, norm, filtration and convention.} 
\textbf{Space and Filtration.} Throughout the paper, denote by $\mathcal D([0,T];\mathbb R^d)$ the Skorokhod space of all functions from $[0,T]$ to $\mathbb R^d$ with c\`adl\`ag path, by $\mathcal C([0,T];\mathbb R^d)\subset\mathcal D([0,T];\mathbb R^d)$ the subset of all continuous functions and by $\mathcal A^m([0,T];\mathbb R^d)\subset \mathcal D([0,T];\mathbb R^d)$ the subset of all non-decreasing functions with $z_T\leq m$ and $m\in(0,\infty]$, which is understood in the componentwise sense $z^j_T\leq m$, $j=1,\cdots,d$. To incorporate the initial and final jumps of elements in $\mathcal D([0,T];\mathbb R^d)$, we identify trajectories on $[0,T]$ with ones on the whole real line by the following trivially extended space

\vspace{-7.1mm}
\[
	\widetilde{\mathcal D}_{0,T}(\mathbb R;\mathbb R^d):=\{x\in {\mathcal D}(\mathbb R;\mathbb R^d): x_t=0\textrm{ for }t<0\textrm{ and }x_t=x_T\textrm{ for }t>T\}.
\]

\vspace{-3.9mm}
Correspondingly, we can define $\widetilde{\mathcal C}_{0,T}(\mathbb R;\mathbb R^d)$ and $\widetilde{\mathcal A}^m_{0,T}(\mathbb R;\mathbb R^d)$. For any metric space $(S,\varrho)$, denote by $\mathcal M_+(S;\varrho)$ the set of all finite non-negative measures on $S$ and by $\mathcal P(S;\varrho)\subset\mathcal M_+(S;\varrho)$ the set of all probability measures on $S$ and by $\mathcal P_p(S;\varrho)$ the subset of probability measures with finite $p$-th moments. When the metric $\varrho$ is clear from the context, we write $\mathcal M_+(S)$, $\mathcal P(S)$ and $\mathcal P_p(S)$ for short. Denote by $\mathcal U([0,T]\times U)\subset\mathcal M_+([0,T]\times U)$ the set of all measures on $[0,T]\times U$ with the first marginal Lebesgue measure on $[0,T]$ and the second marginal a probability measure on $U$, where $U$ is some metric space. Similarly, we identify $\mathcal U([0,T]\times U)$ with $\widetilde{\mathcal U}_{0,T}(\mathbb R\times U)$, where

\vspace{-10.1mm}
\[
	\widetilde{\mathcal U}_{0,T}(\mathbb R\times U)=\{q\in\mathcal U(\mathbb R\times U):1_{(-\infty,0)\times U}q(dt,du)=\delta_{u_0}(du)dt\textrm{ and }1_{(T,\infty)\times U}q(dt,du)=\delta_{u_T}(du)dt\}
\]

\vspace{-5.1mm}
for some fixed $u_0,u_T\in U$. Each element $q\in\widetilde{\mathcal U}_{0,T}(\mathbb R\times U)$ admits the disintegration $q(dt,du)=q_t(du)dt$. 
When there is no confusion, we write $\widetilde{\mathcal D}_{0,T}$, $\widetilde{\mathcal C}_{0,T}$, $\widetilde{\mathcal A}^m_{0,T}$ and $\widetilde{\mathcal U}_{0,T}$ for simplicity. Let the canonical space be defined as the product space
%\vspace{-6.9mm}
%\begin{equation*}\label{canonical-space}
$	\Omega^m:=\widetilde{\mathcal D}_{0,T}\times \widetilde{\mathcal D}_{0,T}\times \widetilde{\mathcal D}_{0,T}\times\widetilde{\mathcal U}_{0,T}\times\widetilde{\mathcal A}^m_{0,T}\times \widetilde{\mathcal A}^m_{0,T}$,
%\end{equation*}
and let $(X^{(1)},X^{(2)},X^{(3)},Q,Z^{(1)},Z^{(2)})$ be the coordinate processes on $\Omega^m$, i.e.,

\vspace{-6.6mm}
\[
	X^{(1)}(\omega)=x^{(1)}, ~X^{(2)}(\omega)=x^{(2)},~ X^{(3)}(\omega)=x^{(3)},~ Q(\omega)=q, ~Z^{(1)}(\omega)=z^{(1)},~ Z^{(2)}(\omega)=z^{(2)},
\]
for each $\omega=(x^{(1)},x^{(2)},x^{(3)},q,z^{(1)},z^{(2)})\in\Omega^m$. Note that by \cite[Section 2.1.2]{FH-2017} and  \cite[Lemma 3.2]{L-2015}, $Q$ can be identified by a predictable disintegration in the following sense
$
Q(dt,du)=Q_t(du)dt.
$
The space $\Omega^m$ is equipped with the product $\sigma$-algebra $\mathcal F_t=\mathcal F^{X^{(1)}}_t\times \mathcal F^{X^{(2)}}_t\times \mathcal F^{X^{(3)}}_t\times\mathcal F_t^Q\times\mathcal F_t^{Z^{(1)}}\times\mathcal F_t^{Z^{(2)}}$, where $\mathcal F^{X^{(i)}}_t$ is the $\sigma$-algebra generated by the $\Pi$ system $\{\{x\in\widetilde{\mathcal D}_{0,T}: (x_{t_1},\cdots,x_{t_n})\in A_1\times \cdots\times A_n\}|~  t_1\leq\cdots\leq t_n\leq t,A_j\in\mathcal B(\mathbb R^{d}),n\in\mathbb N\}$, $\mathcal F^{Z^{(i)}}_t$ is the $\sigma$-algebra generated by the $\Pi$ system $\{\{z\in\widetilde{\mathcal A}^m_{0,T}: (z_{t_1},\cdots,z_{t_n})\in A_1\times\cdots\times A_n\}|~ t_1\leq\cdots\leq t_n\leq t,A_j\in\mathcal B(\mathbb R^{d}),n\in\mathbb N\}$ and $\mathcal F^Q_t$ is the $\sigma$-algebra generated by $1_{[0,t]}\underline Q$, where $\underline Q$ is the coordinate projection from $\widetilde{\mathcal U}_{0,T}$ to itself, i.e., $\underline Q(q)=q$ for each $q\in\widetilde{\mathcal U}_{0,T}$.

\vspace{-1.6mm}
\textbf{Metric. }Let
$|y|$, $\|x\|:=\max_{1\leq j\leq d}|x^j|$ and $|u|_U$ be the norm of $y\in\mathbb R$, $x\in\mathbb R^d$ and $u\in U$, respectively. For $x,y\in\mathbb R^d$, denote by $x\cdot y$ the inner product of $x$ and $y$. For each $t$, $\|x\|_t=\sup_{0\leq s\leq t}\|x_s\|$ denotes the uniform norm of $x\in\mathcal C([0,t];\mathbb R^d)$ (and thus of $x\in\widetilde{\mathcal C}_{0,t}$). Endow $\widetilde{\mathcal D}_{0,T}$ and $\widetilde{\mathcal A}^m_{0,T}$ with Skorokhod weak $M_1$ topology and endow $\mathcal P_p(S;\varrho)$ with Wasserstein metric $\mathcal W_{p,(S,\varrho)}$.  By Proposition \ref{prop:appendix} in Appendix \ref{app:complete} the spaces $\widetilde{\mathcal{D}}_{0,T}$ and $\widetilde{\mathcal{A}}^m_{0,T}$ are Polish when endowed with the $M_1$ topology.
%
%and that the $\sigma$-algebras on $\widetilde{\mathcal{D}}_{0,T}$ 
%and $\widetilde{\mathcal{A}}_{0,T}$ coincide with the Kolmogorov $\sigma$-algebras generated by the coordinate projections,
%
It is well-known that $(\mathcal P_p(S),\mathcal W_{p,(S,\varrho)})$ is Polish if $(S,\varrho)$ is Polish. Endowed with the following metric induced by Wasserstein metric $\widetilde{\mathcal U}_{0,T}$ is Polish:

\vspace{-10.6mm}
\[
	\widetilde{\mathcal W}_{p,[0,T]\times U}(q_1,q_2)=\mathcal W_{p,[0,T]\times U}\left(\frac{q_1}{T},\frac{q_2}{T}\right)+\sum_{n=0}^\infty\frac{1}{2^{n+1}}\left( \mathcal W_{p,[T+n,T+n+1]\times U}(q_1,q_2)+\mathcal W_{p,[-(n+1),-n]\times U}(q_1,q_2)\right).
\]

\vspace{-4.3mm}
\textbf{Convention. }We use the convention that $C$ is a generic constant which may vary from line to line. For a stochastic process $X$ by $X\in\widetilde{\mathcal D}_{0,T}$ we mean $X(\omega)\in\widetilde{\mathcal D}_{0,T}$ a.s.; other analogous notation can be understood in the same way. Whenever we mention $W$, $\mu$, $Z$ and $X$, we mean $(W^{(1)},W^{(2)})$, $(\mu^{(1)},\mu^{(2)},\mu^{(3)},\mu^{(4)},\mu^{(5)})$, $(Z^{(1)},Z^{(2)})$ and $(X^{(1)},X^{(2)},X^{(3)})$, respectively,  unless otherwise stated; the same convention holds for other variants of $(W,~\mu,~Z,~X)$ like $(\widetilde W,~\widetilde\mu,~\widetilde Z,~\widetilde X)$, $(W^k,~\mu^k,~Z^k,~X^k)$ etc.
Moreover, for each $\nu\in\mathcal{P}_p(\widetilde{\mathcal{D}}_{0,T})$, put $\nu_t=\nu\circ\pi^{-1}_t$, where $\pi_t:x\in\widetilde{\mathcal{D}}_{0,T}\rightarrow x_t$ and $\overline{\nu}:=\int x\nu(dx)$. %Finally, the subscript of a matrix and a vector is interpreted as the row-number of the matrix and the component-number of the vector, respectively, while the superscript is interpreted as the index of a sequence.
%As usual, for stochastic processes we make the time-argument a subscript while for the coefficients the time argument is in the parentheses. Finally, when $m=\infty$, we write $\Omega^\infty$, $\widetilde{\mathcal A}^\infty_{0,T}$ and $\mathcal R^\infty$ as $\Omega$, $\widetilde{\mathcal A}_{0,T}$ and $\mathcal R$ for simplicity. 

We are ready to introduce the notion of relaxed controls.
\begin{definition}\label{control-rule-mu}
A probability measure $\mathbb{P}$ on $\Omega^m$ is called a relaxed control with respect to $\mu\in\mathcal{P}_p(\widetilde{\mathcal{A}}^m_{0,T})\times \mathcal{P}_p(\widetilde{\mathcal{A}}^m_{0,T})\times \mathcal{P}_p(\widetilde{\mathcal{D}}_{0,T})\times \mathcal{P}_p(\widetilde{\mathcal{D}}_{0,T})\times \mathcal{P}_p(\widetilde{\mathcal{D}}_{0,T})$ if
%\begin{itemize}

\vspace{-2.6mm}
\quad 1. $(X,Q,Z)$ are coordinate processes on the canonical space $\Omega^m$;%For $t<0$, it holds that for $i=1,2,3$

\vspace{-2.6mm}
\quad 2. there exists an adapted process $Y\in\widetilde{\mathcal D}_{0,T}$ such that\\[-1.1em]
        \begin{equation*}
            \begin{split}
                1)&~\mathbb P\left(Y^{}=X^{(3)}-\alpha^{(1)} Z^{(1)}+\alpha^{(2)}Z^{(2)}\right)=1,\\
                2)&~\mathcal M^{\phi,X^{(1)},Z^{(1)},\mu^{(1)}} \textrm{ and }\mathcal M^{\phi,X^{(2)},Z^{(2)},\mu^{(2)}}\textrm{ are continuous }\mathbb P \textrm{ martingales}, \textrm{ for each }\phi\in\mathcal C^2_b(\mathbb R^d;\mathbb R),\\
                3)&~\mathcal M^{\phi,X^{(3)},Y^{},Q} \textrm{ is a  }\mathbb P\textrm{ martingale with c\`adl\`ag path},\textrm{ for each } \phi\in \mathcal{C}^2_b(\mathbb{R}^d;\mathbb R),
            \end{split}
        \end{equation*}
        where \\[-7.6mm] 
%        \begin{itemize}
     
  \qquad    $\bullet$ $\mathcal{C}^2_b(\mathbb{R}^d;\mathbb R)$ is the space of all continuous and bounded functions from $\mathbb R^d$ to $\mathbb R$ with continuous and bounded first- and second-order derivatives,\\[-7.6mm] 
        
    \qquad    $\bullet$ for $t\in[0,T]$ and $i=1,2$ \\[-3.6mm]
         \begin{equation*}
        	\begin{split}
        	\mathcal M^{\phi,X^{(i)},Z^{(i)},\mu^{(i)}}_t:=&~\phi(X^{(i)}_t)-\int_0^t\mathbb L^{(i)}\phi(s,X_s^{(i)})\,ds-\int_0^t\nabla\phi(X^{(i)}_s)\cdot\,d(\kappa^{(i)}\overline\mu^{(i)}_s+\eta^{(i)}Z^{(i)}_s)\\
        	&~-\sum_{0\leq s\leq t}\left(\phi(X_s)-\phi(X_{s-})-\nabla\phi(X_{s-})\cdot\Delta X_s\right),
        	\end{split}
        	\end{equation*}
        	with $\mathbb L^{(i)}\phi(s,x)=b^{(i)}(s,x,\mu^{(i)})\cdot \nabla\phi(x)+\frac{1}{2}Tr(a^{(i)}(s)\Delta\phi(x))$, $a^{(i)}=\sigma^{(i)}(\sigma^{(i)})^\top$, \\[-6.6mm]
        
        \qquad $\bullet$ and for $t\in[0,T]$,
  %              \begin{equation}\label{martingale-def}
 %           \begin{split}
          $\mathcal{M}^{\phi,X^{(3)},Y^{},Q}_t
            :=\phi(Y^{}_t)-\int_0^t\int_U\mathcal L^{}\phi(s,X^{(3)}_{s-},Y^{}_{s-},u)\,Q_s(du)ds,$
  %           \end{split}
   %          \end{equation}
             with $\mathcal L^{}\phi(s,x,y,u):=\nabla\phi(y)\cdot b^{(3)}(s,x,u)+\big(\phi(y+l^{}(s,u))-\phi(y)-\nabla\phi(y)\cdot l^{}(s,u)\big)\lambda^{}_s$.
  %           \end{itemize}
%    \end{itemize}
\end{definition}
 \begin{remark}
 \begin{itemize}
 \item[(1)] In \cite{FH-2017}, the probability measure $\mathbb P$ defined on the canonical space is called a \textit{control rule}. Here we do not distinguish control rule and relaxed control since there is no confusion.\\[-1.9em] 
 \item[(2)] The definition of relaxed controls (control rules) is different from \cite{FH-2017}. The current definition can avoid considering the simultaneous jumps of singular controls and the Poisson process in the definition of $\mathcal M^{\phi,X^{(3)},Y,Q}$. Definition \ref{control-rule-mu} is linked to the weak solution of SDEs given by the following proposition. The proof is the same as \cite[Lemma 2.1]{BCP-2017}.
 \end{itemize}
 \end{remark}
 \begin{proposition}\label{martingale-representation}
The probability measure $\mathbb P$ on $\Omega^m$ is a relaxed control if and only if there is an extension of $(\Omega,\mathcal F,\{\mathcal F_t\},\mathbb P)$, $(\widehat\Omega,\widehat{\mathcal F},\{\widehat{\mathcal F}_t\},\widehat{\mathbb P})$, on which a tuple of adapted stochastic processes $(\widehat X,\widehat Z,\widehat Q,\widehat W,\widehat N)$ is defined such that {for all $t\in[0,T]$\footnote{Note that the time horizon $[0,T]$ only depends on the horizon of the corresponding martingale problem. In Section \ref{proof} the horizon of the martingale problem is enlarged to $[0,T+1]$, so is the horizon of the weak solution of SDEs.}}
\begin{equation}\label{resume-SDE-12}
    d\widehat X^{(i)}_t=b^{(i)}(t,\widehat X^{(i)}_t,\mu^{(i)}_t)\,dt+\,d\left(\eta^{(i)}\widehat Z^{(i)}_t+\kappa^{(i)}\overline\mu^{(i)}_t\right)+\sigma^{(i)}(t)\,d\widehat W^{(i)}_t,~i=1,2, \textrm{ and}
\end{equation}
\begin{equation}\label{resume-SDE-3}
	\begin{split}
    d\widehat X^{(3)}_t=&~\int_Ub^{(3)}(t,\widehat X^{(3)}_t,u)\widehat Q(dt,du)+\alpha^{(1)}\,d\widehat Z^{(1)}_t-\alpha^{(2)}\,d\widehat Z^{(2)}_t+\int_Ul^{}(t,u)\widetilde {\widehat N^{}}(dt,du),
	\end{split}
\end{equation}
where $\widehat W^{(1)}$ and $\widehat W^{(2)}$ are two Brownian motions, and $\widetilde {\widehat N^{}}$ is a compensated Poisson random measure with intensity $\lambda^{}_t\widehat Q(dt,du)$. Moreover, two tuples are related by $\mathbb P\circ(X,Q,Z)^{-1}=\widehat{\mathbb P}\circ (\widehat X,\widehat Q,\widehat Z)^{-1}$.
 \end{proposition}

\vspace{-2.6mm}
Given $\mu\in\mathcal{P}_p(\widetilde{\mathcal{A}}^m_{0,T})\times\mathcal{P}_p(\widetilde{\mathcal{A}}^m_{0,T})\times\mathcal{P}_p(\widetilde{\mathcal{D}}_{0,T})\times\mathcal{P}_p(\widetilde{\mathcal{D}}_{0,T})\times\mathcal{P}_p(\widetilde{\mathcal{D}}_{0,T})$, the set of relaxed controls associated with $\mu$ is denoted by $\mathcal{R}^m(\mu)$, and the cost associated with a relaxed control $\mathbb{P}\in\mathcal{R}^m(\mu)$ is given by

\vspace{-8.6mm}
    \begin{equation}\label{cost-relaxed-control}
            \begin{split}
        J(\mathbb P;\mu)=&~\mathbb E^{\mathbb P}\left[\sum_{i=1}^2\int_0^Th(X^{(i)}_s)\cdot\,d(\kappa^{(i)}\overline\mu^{(i)}_s+\eta^{(i)}Z_s^{(i)})^c+\sum_{i=1}^2\sum_{0\leq t\leq T}\sum_{j=1}^d\int_0^{\Delta X^{(i)}_{j,t}}h_j(X^{(i)}_{j,t-}+x)\,dx\right.\\
    %    &~+\int_0^Th(X^{(2)}_s)\cdot\,d(\kappa^{(2)}\overline\mu^{(2)}_s+\eta^{(2)}Z_s^{(2)})^c+\sum_{0\leq t\leq T}\sum_{j=1}^d\int_0^{\Delta X^{(2)}_{j,t}}h_j(X^{(2)}_{j,t-}+x)\,dx\\
        &~\left.+\int_0^T\int_Uf(t,X_t,\mu_t,u_t)\,Q_t(du)dt+g(X_T,\mu_T)\right].
        \end{split}
        \end{equation}
    
    \vspace{-5.1mm}
The set of optimal relaxed controls associated with $\mu$ is denoted by
$
	\mathcal{R}^{m,*}(\mu):=\textrm{argmin}_{\mathbb P\in\mathcal{R}^m(\mu)}J(\mathbb{P};\mu).
$
Based on the notion of relaxed controls, we introduce the definition of relaxed solutions to MFGs. If a probability measure $\mathbb{P}$ satisfies the fixed point property
%\[	
	$\mathbb{P}\in \mathcal{R}^{m,*}\Big(\mathbb{P}\circ (Z^{(1)})^{-1},\mathbb{P}\circ (Z^{(2)})^{-1},\mathbb P\circ(X^{(1)})^{-1},\mathbb P\circ(X^{(2)})^{-1},\mathbb P\circ(X^{(3)})^{-1}\Big),$
%\]	
then we call $\mathbb{P}$ or the associated tuple $(\Omega^m,\mathcal{F},\{\mathcal{F}_t\},\mathbb{P},X,Q,Z)$ a relaxed solution to the MFG with singular controls \eqref{general-MFG}. Moreover, if $\mathbb{P}\in \mathcal{R}^{m,*}\Big(\mathbb{P}\circ (Z^{(1)})^{-1},\mathbb{P}\circ (Z^{(2)})^{-1},\mathbb P\circ(X^{(1)})^{-1},\mathbb P\circ(X^{(2)})^{-1},\mathbb P\circ(X^{(3)})^{-1}\Big)$ and $\mathbb{P}(Q(dt,du)=\delta_{\tilde{u}_t}(du)dt)=1$ for some progressively measurable process $\tilde{u}$, then we call $\mathbb{P}$ or the associated tuple $(\Omega^m,\mathcal{F},\{\mathcal{F}_t\},\mathbb{P},X,\bar{u},Z)$ a strict solution.

To guarantee the existence of a relaxed solution to \eqref{general-MFG}, we make the following assumptions.

\vspace{-5.6mm}
 \begin{itemize}
	\item[$\mathcal{A}_1$. ] The $\mathbb R^d$ valued functions $b^{(1)},~b^{(2)}$ and $b^{(3)}$ are measurable in $t\in[0,T]$ and there exists a positive constant $C_1$ such that $\|b^{(1)}(t,x,\nu)\|+\|b^{(2)}(t,x,\nu)\|\leq C_1(1+\|x\|+\mathcal W_p(\nu,\delta_0))$, $\|b^{(3)}(t,x,u)\|\leq C_1(1+\|x\|)$ and
	$\|b^{(1)}(t,x,\nu)-b^{(1)}(t,y,\nu)\|+\|b^{(2)}(t,x,\nu)-b^{(2)}(t,y,\nu)\|+\|b^{(3)}(t,x,u)-b^{(3)}(t,y,u)\|\leq C_1\|x-y\|$, for any $(t,x,y,u,\nu)\in[0,T]\times\mathbb R^d\times\mathbb R^d\times U\times\mathcal P_p(\mathbb R^d)$. Morover, $b^{(3)}$ is continuous in $u$.
	\item[$\mathcal{A}_2$. ] The function $f$ is measurable in $t\in[0,T]$ and continuous with respect to $(x,\nu,u)\in(\mathbb{R}^d)^3\times(\mathcal{P}_p(\mathbb{R}^d))^5\times U$. $g$ is continuous in $(x,\nu)\in(\mathbb{R}^d)^3\times(\mathcal P_p(\mathbb R^d))^5$. $h(y):=(h_1(y_1),\cdots,h_d(y_d))$ for each $y\in\mathbb R^d$ and each $h_i\in\mathcal C^1(\mathbb R)$, the space of continuous functions on $\mathbb R $ with continuous derivatives.
	\item[$\mathcal{A}_3$. ]  For $p\geq 1$, there exists a positive constant $C_2$  such that for each $(t,x,y,\nu,u)\in [0,T]\times(\mathbb{R}^d)^3\times\mathbb R\times(\mathcal{P}_p(\mathbb{R}^d))^5\times U$
	\[
	|h_i(y)|+ |h'_i(y)|\leq C_2\left(1+|y|^{p-1}\right),\quad i=1,\cdots,d,
	\]
	\[
	|g(x,\nu)|\leq C_2\left(1+\|x\|^{{p}}+\mathcal W_p^p(\nu,\delta_0)\right)\quad
	\textrm{ and }\quad
	|f(t,x,\nu,u)|\leq C_2\left(1+\|x\|^{p}+\mathcal W_p^{ p}(\nu,\delta_0)\right),
	\]
	where $\mathcal W_p(\nu,\delta_0):=\left(\sum_{i=1}^5\mathcal W_p^{ p}(\nu^{(i)},\delta_0)\right)^{\frac{1}{p}}$.
	\item[$\mathcal{A}_4$. ] $(\alpha^{(1)},\alpha^{(2)},\kappa^{(1)},\kappa^{(2)},\eta^{(1)},\eta^{(2)})\in\mathbb R^6$. $(\sigma^{(1)}, \sigma^{(2)}):[0,T]\rightarrow\mathbb R^{d}\times\mathbb R^{d}$ are bounded and measurable. Denote $a^{(1)}=\sigma^{(1)}(\sigma^{(1)})^\top$ and $a^{(2)}=\sigma^{(2)}(\sigma^{(2)})^\top$. $\lambda^{}:[0,T]\rightarrow(0,\infty)$ are measurable and bounded. $l^{}$ is a bounded and measurable function on $[0,T]\times U$ and continuous in $u$. $\kappa^{(i)}\eta^{(i)}\geq 0$, $i=1,2$.
	\item[$\mathcal{A}_5$. ] The functions $b^{(1)}$, $b^{(2)}$ and $f$ are locally Lipschitz continuous with measures uniformly in other arguments i.e., there exists $C_3>0$ such that for each $(t,x,y,u)\in[0,T]\times(\mathbb{R}^d)^3\times\mathbb{R}^d\times U$, $\nu^1,\nu^2\in(\mathcal P_p(\mathbb R^d))^5$ and $\nu^{1'},\nu^{2'}\in\mathcal P_p(\mathbb R)$ there holds that
	
	\vspace{-7.6mm}
	\begin{equation}
		\begin{split}
	&~|f(t,x,\nu^1,u)-f(t,x,\nu^2,u)|\leq C_3\Big(1+L(\mathcal{W}_p(\nu^1,\delta_0),\mathcal{W}_p(\nu^2,\delta_0))\Big) \mathcal{W}_p(\nu^1,\nu^2),\\
	&~|b^{(i)}(t,y,\nu^{1'})-  b^{(i)}(t,y,\nu^{2'})	| \leq C_3\Big(1+L(\mathcal{W}_p(\nu^{1'},\delta_0),\mathcal{W}_p(\nu^{2'},\delta_0))\Big) \mathcal{W}_p(\nu^{1'},\nu^{2'}), 
	\end{split}	
\end{equation}
	where $L(\mathcal{W}_p(*,\delta_0),\mathcal{W}_p(**,\delta_0))$ is locally bounded with $\mathcal{W}_p(*,\delta_0)$ and $\mathcal{W}_p(**,\delta_0)$. 
	\item[$\mathcal{A}_6$. ] $U$ is a compact metrizable space. 
\end{itemize}

\vspace{-3.6mm}
The following two theorems are our two main results. The proofs of them are given in Section \ref{proof} and Section \ref{sec:general-control}, respectively.

\begin{theorem}[Existence with finite fuel constraint]\label{existence}\label{thm:existence-finite-fuel}
Under assumptions $\mathcal A_1$-$\mathcal A_6$, there exists a relaxed solution to MFGs with singular controls \eqref{general-MFG} for each $0<m<\infty$.
\end{theorem}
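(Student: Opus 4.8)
The plan is to follow the probabilistic compactification (relaxed solution) method of \cite{L-2015,Lacker2018}, but in two stages as indicated in the introduction, because the simultaneous jumps of $Z^{(1)}$, $Z^{(2)}$ and the Poisson integral obstruct the direct use of the $M_1$ topology on $\Omega^m$.

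\textbf{Step 1 (mollified MFG).} For each $k\in\mathbb{N}$ I would replace the singular controls $Z^{(i)}$ in the state dynamics and in the cost by the continuous, adapted approximants $Z^{(i),k}_t:=k\int_{t-1/k}^t Z^{(i)}_s\,ds$, working on a slightly enlarged horizon so that no jump sits at the terminal time, thereby obtaining an extended MFG whose only jump process is the Poisson random measure. For this $k$-problem I would run the classical fixed-point argument: (i) fix a flow of measures $\mu$ in a suitable convex set; (ii) show the set of relaxed controls $\mathcal{R}^{m,k}(\mu)$ is nonempty and compact --- nonemptiness from weak existence for \eqref{resume-SDE-12}--\eqref{resume-SDE-3} with mollified controls, compactness from the $M_1$-compactness of $\widetilde{\mathcal{A}}^m_{0,T}$ (bounded monotone paths, oscillation function $\equiv 0$), tightness of the martingale problem \eqref{martingale-def} for $X^{(3)}$ via a Jakubowski/Aldous-type criterion, and the standard compactness of relaxed controls on $[0,T]\times U$ when $U$ is compact ($\mathcal{A}_6$); (iii) verify $J(\cdot;\mu)$ is continuous on $\mathcal{R}^{m,k}(\mu)$ using the growth bounds $\mathcal{A}_2$--$\mathcal{A}_3$ together with uniform $p$-moment estimates to get uniform integrability, and that the $\mu$-dependence is controlled by $\mathcal{A}_5$; (iv) invoke Berge's maximum theorem to obtain that $\mu\mapsto\mathcal{R}^{m,k,*}(\mu)$ is upper hemicontinuous with nonempty, convex (the cost being affine in $\mathbb{P}$) and compact values; (v) check that the candidate measures $(\mathbb{P}\circ(Z^{(1)})^{-1},\dots,\mathbb{P}\circ(X^{(3)})^{-1})$ stay in a fixed convex compact subset of $\big(\mathcal{P}_p(\widetilde{\mathcal{A}}^m_{0,T})\big)^2\times\big(\mathcal{P}_p(\widetilde{\mathcal{D}}_{0,T})\big)^3$ (again using $z_T\le m$ and the uniform moment bounds), so that the Kakutani--Fan--Glicksberg theorem applies and produces an equilibrium $\mathbb{P}^k$ of the $k$-problem.

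\textbf{Step 2 (approximation $k\to\infty$).} I would then show $\{\mathbb{P}^k\}_k$ is tight on $\Omega^m$: the components $X^{(i)}$, $i=1,2$, are tight in $\mathcal{C}$ by Burkholder--Davis--Gundy and Kolmogorov estimates, the singular-control components live in the $M_1$-compact set $\widetilde{\mathcal{A}}^m_{0,T}$, the $Q$-component is tight by compactness of $U$, and the $X^{(3)}$-component, equal to $Y+\alpha^{(1)}Z^{(1)}-\alpha^{(2)}Z^{(2)}$ with $Y$ as in Definition \ref{control-rule-mu}, is $M_1$-tight because $Y$ is tight in $\mathcal{C}$ after removing the bounded Poisson jumps (as in \cite{BCP-2017}) while the two monotone pieces are $M_1$-tight; here it is essential that $M_1$ convergence tolerates unmatched jumps, so that no cancellation issue arises. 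Passing to a subsequential limit $\mathbb{P}$, I would identify it as a relaxed control for \eqref{general-MFG} using $Z^{(i),k}\to Z^{(i)}$ in $M_1$ and the $M_1$-continuity (resp.\ semicontinuity, on the relevant compact subsets) of the functionals defining $\mathcal{M}^{\phi,X^{(i)},Z^{(i)},\mu^{(i)}}$ and $\mathcal{M}^{\phi,X^{(3)},Y,Q}$, together with Proposition~\ref{martingale-representation}. The fixed-point property transfers because $\mathbb{P}\circ(Z^{(i)})^{-1}$ and $\mathbb{P}\circ(X^{(i)})^{-1}$ are the weak, hence (by uniform $p$-moments) $\mathcal{W}_p$-, limits of the corresponding marginals of $\mathbb{P}^k$. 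Finally, optimality of $\mathbb{P}$ for its own induced $\mu$ is obtained by a stability/diagonal argument: given any competitor $\mathbb{Q}\in\mathcal{R}^m(\mu)$, I would mollify its singular-control part to produce admissible responses $\mathbb{Q}^k$ to the $k$-problem attached to the (slightly perturbed) flow, use $J(\mathbb{P}^k;\mu^k)\le J(\mathbb{Q}^k;\mu^k)$, and pass to the limit on both sides via the continuity established above.

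\textbf{Main obstacle.} The delicate point is the whole of Step 2: proving that the cost functional and the martingale-problem constraints are stable under $M_1$ convergence --- in particular that the boundary-type terms $\int_0^T h(X^{(i)}_s)\cdot d(\kappa^{(i)}\overline\mu^{(i)}_s+\eta^{(i)}Z^{(i)}_s)^c$ and $\sum_{0\le t\le T}\int_0^{\Delta X^{(i)}_t}h(X^{(i)}_{t-}+x)\,dx$ pass to the limit (these are not $J_1$-continuous and only conditionally $M_1$-continuous), and that an arbitrary competitor response to the limiting MFG is realized as a limit of admissible responses to the mollified problems with converging costs. Tracking the terminal-time jumps via the enlarged horizon, as announced after \eqref{general-MFG}, is precisely what makes this last approximation go through.
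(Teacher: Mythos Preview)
Your Step~1 is essentially the paper's Section~\ref{section-finite-fuel-smooth}: mollify $Z^{(i)}$ to $Z^{(i),[k]}$ on the enlarged horizon $[0,T+1]$, establish relative compactness and continuity of the correspondence $\mu\mapsto\mathcal{R}^{m,[k]}(\mu)$ (Lemma~\ref{relative-compactness-union-control-rule}, Proposition~\ref{admissibility-limit-P-proposition}), joint continuity of $J^o$ (Lemma~\ref{continuity-J-mu-P}), apply Berge and then Kakutani--Fan--Glicksberg (Theorem~\ref{existence-MFG-finite-fuel}). That part is fine.

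The gap is in Step~2. You claim that ``the $X^{(3)}$-component \dots\ is $M_1$-tight because $Y$ is tight in $\mathcal{C}$ after removing the bounded Poisson jumps \dots\ while the two monotone pieces are $M_1$-tight; here it is essential that $M_1$ convergence tolerates unmatched jumps, so that no cancellation issue arises.'' This is precisely the step that does \emph{not} go through, and the paper explicitly says so: ``the tightness of $X^{(3)}$ is necessary in Section~\ref{section-finite-fuel-smooth} while we do not need it in Section~\ref{sec:approximation}.'' The problem is not unmatched jumps but \emph{simultaneous} jumps with opposite signs: $(\widetilde{\mathcal{D}}_{0,T},M_1)$ is not a topological vector space, so even if $Y^k\to Y$, $Z^{(1),k}\to Z^{(1)}$, $Z^{(2),k}\to Z^{(2)}$ separately in $M_1$, it does \emph{not} follow that $Y^k+\alpha^{(1)}Z^{(1),k,[k]}-\alpha^{(2)}Z^{(2),k,[k]}$ converges in $M_1$, nor that the sequence of laws of $X^{o,(3)}$ under $\mathbb{P}^{m,[k],*}$ is $M_1$-tight uniformly in $k$ (the Aldous bound for the mollified part scales like $km\delta$, which blows up). The paper circumvents this by never asserting $M_1$-tightness of $X^{(3)}$ along $k$: it takes limits only of the tuple $(Y,X^{(1)},X^{(2)},Z^{(1)},Z^{(2)},Q)$, passes to a Skorokhod realization, and then \emph{defines} $\overline{X}^{(3)}:=\overline{Y}+\alpha^{(1)}\overline{Z}^{(1)}-\alpha^{(2)}\overline{Z}^{(2)}$ pathwise. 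The only convergence used for $X^{(3)}$ is the weak one in \eqref{step2-convergence-bar-X3k-to-bar-X-3}, namely $\mathbb{E}\int_0^{T+1}\|\overline{X}^{(3),k}_t-\overline{X}^{(3)}_t\|^p\,dt\to 0$ together with convergence at $T+1$; this is enough for the martingale problem (Lemma~\ref{lem:stability-martingale-2}) and for the cost.

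A second point you only flag but do not resolve: the ``boundary-type'' singular integrals $\int_0^T h(X^{(i)}_s)\cdot d(\kappa^{(i)}\overline\mu^{(i)}_s+\eta^{(i)}Z^{(i)}_s)^c+\sum\int_0^{\Delta X^{(i)}_t}h(X^{(i)}_{t-}+x)\,dx$ are not handled by a direct $M_1$-continuity argument. The paper's device (Lemma~\ref{lem-new-cost}) is to apply It\^o's formula to $\int_0^y h_j(x)\,dx$ and rewrite the whole cost as $\mathcal{J}(\mathbb{P},X,Q,\mu)$ involving only $\sum_j\int_{X^{(i)}_{j,0-}}^{X^{(i)}_{j,T}}h_j(x)\,dx$, $\int_0^T h(X^{(i)}_t)\cdot b^{(i)}(t,X^{(i)}_t)\,dt$, $\int_0^T a^{(i)}_{jj}h'_j(X^{(i)}_{j,t})\,dt$, and the running/terminal costs. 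These are functionals of the endpoints and of $\int_0^T(\cdots)\,dt$, which are stable under the $M_1$/$L^p$-in-time convergences above; this is what makes both Lemma~\ref{continuity-J-mu-P} and the approximation Lemma~\ref{app-convergence-any-law} work. Without this transformation your continuity claims in (iii) and the limit passage for competitors in Step~2 remain unjustified.
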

\begin{theorem}[Existence with general singular controls]\label{thm:general-control}
	In addition to assumptions $\mathcal A_1$-$\mathcal A_6$, we assume the following assumption $\mathcal A_7$ holds.
	\newline{$\mathcal A_7$.} For $i=1,2$, $\eta^{(i)}=0$ if and only if $\alpha^{(i)}=0$. For $p\geq 2$, there exists a positive constant $C_4$ such that the following coercive conditions hold for $i=1,2$, $j=1,\cdots,d$, $\hat x\in\mathbb R$, $x\in\mathbb R^d$, $\tilde x\in(\mathbb R^{d})^3 $, $\nu\in\mathcal P_p(\mathbb R^d)$, $\tilde\nu\in(\mathcal P_p(\mathbb R^{d}))^5$ and $u\in U$
	
	\vspace{-5.6mm}
	\begin{equation}\label{ass:coercive}
		\left\{\begin{split}
		&~-C_4(1-|\hat x|^{p})\leq \int_0^{\hat x} h_j(r)\,dr\leq C_4(1+|\hat x|^{p}),\\
		&~	-C_4(1-|x_j|-\mathcal W_p(\nu,\delta_0))	\leq b_j^{(i)}(t,x,\nu)\leq C_4(	1+|x_j|+\mathcal W_p(\nu,\delta_0)	), \\%	\textrm{ for }x\in\mathbb R^d\textrm{ and }\nu\in\mathcal P_p(\nu,\delta_0),	\\
	&~-C_4(1-\|x\|^{p}-\mathcal W^p_p(\nu,\delta_0))\leq-h(x)\cdot b^{(i)}(t,x,\nu)\leq C_4(1+\|x\|^{p}+\mathcal W^p_p(\nu,\delta_0)),  \\%\textrm{ for }x\in\mathbb R^d\textrm{ and }\nu\in\mathcal P_p(\nu,\delta_0),\\
	&~-C_4(1-\|\tilde x\|^{p}-\mathcal W_p^p(\tilde\nu,\delta_0))\leq g(\tilde x,\tilde\nu)\leq C_4(1+\|\tilde x\|^{p}+\mathcal W_p^p(\tilde\nu,\delta_0)),\\
	&~-C_4(1-\|\tilde x\|^{p}-\mathcal W_p^{ p}(\tilde\nu,\delta_0))\leq f(t,\tilde x,\tilde\nu,u)\leq C_4\left(1+\|\tilde x\|^{ p}+\mathcal W_p^p(\tilde\nu,\delta_0)\right).
		\end{split}\right.
	\end{equation} 
{Moreover, for some $ 1<\bar p<p$, the continuity of the coefficients with the measure is in $\mathcal W_{\bar p}$:		
	\begin{equation}\label{ass:convergence-lower-order}
	\left\{\begin{split}
		&~g(x^n,\nu^n)\rightarrow g(x,\nu)\textrm{ if }x^n\rightarrow x\textrm{ and }\nu^n\rightarrow\nu\textrm{ in }\mathcal W_{\bar p},\\
		&~|f(t,x,\nu^1,u)-f(t,x,\nu^2,u)|\leq C_3\Big(1+L(\mathcal{W}_p(\nu^1,\delta_0),\mathcal{W}_p(\nu^2,\delta_0))\Big) \mathcal{W}_{\bar p}(\nu^1,\nu^2),\\
		&~|b^{(i)}(t,y,\nu^{1'})-  b^{(i)}(t,y,\nu^{2'})	| \leq C_3\Big(1+L(\mathcal{W}_p(\nu^{1'},\delta_0),\mathcal{W}_p(\nu^{2'},\delta_0))\Big) \mathcal{W}_{\bar p}(\nu^{1'},\nu^{2'}), 
	\end{split}	\right.
\end{equation}					}
Then there exists a relaxed solution to \eqref{general-MFG} when $m=\infty$.
\end{theorem}

\begin{remark}\label{existence-strict-MFG} 
	(1). Examples in Section \ref{sec:motivation} satisfy assumptions $\mathcal A_1$-$\mathcal A_7$.
\newline (2). By \cite[Remark 2.8]{FH-2017}, additional convexity assumption implies that a strict solution to MFGs can be constructed from a relaxed solution.%If we assume for each $(t,x^{(1)},x^{(2)},x^{(3)},\nu)\in[0,T]\times\mathbb{R}^d\times\mathbb{R}^d\times\mathbb{R}^d\times\mathcal{P}_p(\mathbb{R}^d)\times\mathcal{P}_p(\mathbb{R}^d)\times\mathcal{P}_p(\mathbb{R}^d)$, $K(t,x,\nu)$ is convex, where
%    \[
%        K(t,x,\nu)=\{(b^{(3)}(t,x^{(3)},u),e):~e\geq f(t,x,\nu,u),~u\in U\},
%    \]
%     a strict solution to our MFG can be constructed from a relaxed solution. Let $r^*=(\Omega^m,\mathcal{F},\mathcal{F}_t,\mathbb{P}^*,X,Q,Z)$ be a relaxed solution to MFG and $f^*(t,x,u)=f(t,x,\mu^*_t,u)$, where $\mu^{(i),*}=\mathbb{P}^*\circ (Z^{(i)})^{-1}$, $i=1,2$ and $\mu^{(3),*}=\mathbb P^*\circ (X^{(3)})^{-1}$. By \cite[Theorem 3.6]{HL-1990}, there exist $U$-valued process $(c_t)$ and $\mathbb{R}^+$-valued process $(v_t)$ such that almost surely
%     \[
%        \left(\int_Ub^{(3)}(t,X_t,u)Q_t(du),\int_Uf^*(t,X_t,u)Q_t(du)\right)=(b^{(3)}(t,X_t,c_t),f^*(t,X_t,c_t)+v_t)
%     \]
%     Define
%     \[
%        \alpha^*=(\Omega^m,\mathcal{F},\mathcal{F}_t,\mathbb{Q}^*,X,{c},Z),
%     \]
%     where $\mathbb Q^*=\mathbb P^*\circ (X,\delta_{c_t}(du)dt,Z)^{-1}$.
%     Then, $\alpha^*$ is a strict solution. The point is that the $Z^{(1)}$-, $Z^{(2)}$- and $X^{(3)}$- marginals are invariant when passing from $r^*$ to $\alpha^*$.
\newline (3).
In Theorem \ref{thm:existence-finite-fuel} and Theorem \ref{thm:general-control} we consider all processes starting from $0$ for simplicity. The extension to general and different initial values is straightforward. However, there is one generalization our analysis cannot address; neither $\alpha^{(i)}$ nor $\eta^{(i)}$ is allowed to depend on the state, control or measure, unless our model \eqref{general-MFG} is modified. The reason is the possible simultaneous jumps of the intergrant and the differential function. For example, if $Z^n\rightarrow Z$ in $M_1$, generally it is not true that $\int_0^\cdot Z^n_s\,dZ^n_s\rightarrow\int_0^\cdot Z_s\,dZ_s$. %This is different from \cite[Lemma A.2]{Dianetti2020}, where the integrand is continuous.}
\newline {(4). 
	%The relationship between our MFG and an $N$-player game can be expected. Indeed, 
	It can be proved that the relaxed solution obtained in Theorem \ref{thm:general-control} can be used to construct an approximate equilibrium of an $N$-player game. {It is worth noting that we do not distinguish regular controls and singular controls in discrete time models. For this reason, MFGs in discrete time can be viewed as MFGs with singular controls. Regarding the problem of approximate equilibria of $N$-player games in discrete time, we may refer to e.g. \cite{GX-2019,Saldi2018}, among many others.} However, it is difficult to prove the reverse convergence; refer to \cite{Fischer2017,Lacker-2016} for the weak convergence without singular controls. For the convergence from $N$-player games to MFGs with singular controls, the only result to our knowledge is \cite{GX-2018}, where the (strong) convergence of value functions was obtained by the explicit solution of the $N$-player game. For the general characterization of the weak convergence, it is open in the literature. 
	We plan to study it in an independent work, together with the (well-established) approximate relaxed Nash equilibrium result.} 
 %, given the length of our paper.
%Other extensions like mean field dependent terminal cost $g(X_T,\mu_T)$ could be considered as well. We do not aim at giving minimal assumptions under which our result is true.
\end{remark}
%%%%%%%%%%%%%%%%%%%%%%%%%%%%%%%%%%%%%%%%%%%%%%%%
%%%%%%%%%%%%%%%%%%%%%%%%%%%%%%%%%%%%%%%%%%%%%%%%

\vspace{-7.6mm}
\section{Existence of Equilibria with Finite Fuel Constraint}\label{proof}

\vspace{-5.6mm}
In this whole section (Section \ref{section-finite-fuel-smooth} and \ref{sec:approximation}), we prove the existence of a relaxed solution to MFGs under a finite fuel constraint, i.e., Theorem \ref{thm:existence-finite-fuel}. That is, in this section the space of admissible singular controls is
%\begin{equation} \label{Am}
$\widetilde{\mathcal{A}}^m_{0,T}:=\{z\in\widetilde{\mathcal{A}}_{0,T}:\|z_T\|\leq m\},$ %\quad \mbox{for some $m>0$}.
%\end{equation}
for some $m\in(0,\infty)$. By \cite[Theorem 12.12.2]{Whitt-2002}, the set $\widetilde{\mathcal{A}}^m_{0,T}$ is $(\widetilde{\mathcal{D}}_{0,T},{M_1})$ compact.

\vspace{-1.6mm}
As mentioned in the introduction, due to the possible simultaneous jumps of $Z^{(1)}$, $Z^{(2)}$ and the Poisson process, it is difficult to show the tightness of $X^{(3)}$. 
We circumvent the problem by spliting the proof of Theorem \ref{existence} into two parts. In Section \ref{section-finite-fuel-smooth} we prove the existence of equilibria by smoothing the singular controls $Z^{(1)}$ and $Z^{(2)}$. Thus, the tightness of $X^{(3)}$ can be obtained in Section \ref{section-finite-fuel-smooth}. The general case is considered in Section \ref{sec:approximation} using an approximation argument. Note that the tightness of $X^{(3)}$ is necessary in Section \ref{section-finite-fuel-smooth} while we do not need it in Section \ref{sec:approximation}.

Precisely, in Section \ref{section-finite-fuel-smooth}, instead of singular control $Z$ we consider its continuous counterpart $Z^{[k]}_{t}:=k\int_{t-1/k}^t Z_s\,ds$. %Note that by \cite[Lemma 6.1 and Lemma 6.2]{KPP-1995} we have
%\[
%	Z^{(i),[k]}\rightarrow Z^{(i)}\textrm{ in }(\mathcal A(\mathbb R;\mathbb R^d), M_1),~i=1,2.
%\]
Since $T+1$ is definitely a continuous point of $Z^{(i)}$, by \cite[Theorem 12.9.3(ii)]{Whitt-2002}, it holds that
%\begin{equation}\label{KPP-convergence}
	$	Z^{(i),[k]}\rightarrow Z^{(i)}\textrm{ in }(\widetilde{\mathcal A}_{0,T+1}, M_1),~i=1,2$,
%\end{equation}
which is not necessarily true in $(\widetilde{\mathcal A}_{0,T}^m,M_1)$ since $T$ might be a discontinuous time point of $Z^{(i)}$.  Therefore, in Section \ref{section-finite-fuel-smooth} the canonical space is chosen as  

\vspace{-6.6mm}
$$\Omega^{m,o}:=\widetilde{\mathcal D}_{0,T+1}\times\widetilde{\mathcal D}_{0,T+1}\times\widetilde{\mathcal D}_{0,T+1}\times\widetilde{\mathcal U}_{0,T}\times\widetilde{\mathcal A}_{0,T}^m\times\widetilde{\mathcal A}_{0,T}^m.$$ Let $X^o$ be the coordinate projection onto $\widetilde{\mathcal D}_{0,T+1}\times\widetilde{\mathcal D}_{0,T+1}\times\widetilde{\mathcal D}_{0,T+1}$ and $(\mathcal F^o_t)$ be the canonical filtration on $\Omega^o$. Correspondingly, we extend the time-domain of coefficients from $[0,T]$ to $[0,T+1]$, i.e., let $\widetilde\gamma$ satisfy the same assumptions as $\mathcal A_1$-$\mathcal A_6$ with $[0,T]$ replaced by $[0,T+1]$ such that
\begin{equation}\label{extension}
\widetilde \gamma(t,\cdot)=
\gamma(t,\cdot),\quad\textrm{on }t\in[0,T],
\end{equation}
where $\widetilde\gamma=\widetilde b^{(1)},~\widetilde b^{(2)},~\widetilde b^{(3)},~\widetilde\sigma^{(1)},~\widetilde\sigma^{(2)},~\widetilde\lambda^{},~\widetilde l^{}$ and $\gamma=b^{(1)},~b^{(2)},~ b^{(3)},~\sigma^{(1)},~\sigma^{(2)},~\lambda^{},~l^{}$.
In Section \ref{section-finite-fuel-smooth} we consider the MFG with $\widetilde\gamma$. But for simplicity, we use the notation $\gamma$ instead of $\widetilde\gamma$. Moreover,  in Section \ref{section-finite-fuel-smooth}, we
consider terminal cost $g(X^o_{T+1},\mu_{T+1})$ instead of $g(X_T,\mu_T)$; see Section \ref{section-finite-fuel-smooth} for details.

In order to make MFG in Section \ref{section-finite-fuel-smooth} converge to the original MFG \eqref{general-MFG},  in Section \ref{sec:approximation} we make a further assumption that the coefficients are trivially extended from $[0,T]$ to $[0,T+1]$, i.e.,

\vspace{-6.6mm}
 		\begin{equation}\label{trivial-extension}
 		\widetilde \gamma(t,\cdot)=
 		\overline\gamma(t)\gamma(t,\cdot),
 		\end{equation}
 where $\overline\gamma(t)=1$ when $0\leq t\leq T$ and $\overline\gamma(t)=0$ elsewhere. In particular, \eqref{trivial-extension} implies \eqref{extension}. 
Again, to simplify the notation, we identify $\widetilde \gamma$ with $\gamma$ in Section \ref{sec:approximation}. 

\vspace{-5.6mm}
\subsection{Existence  of Equilibria with $Z^{[k]}$}\label{section-finite-fuel-smooth}

\vspace{-3.6mm}
In this part, we replace $Z$ by $Z^{[k]}$. Due to the continuity of $Z^{[k]}$, the corresponding MFG becomes%and the definition of relaxed controls become less complicated. More precisely, we consider the following MFG
\begin{equation}\label{MFG-smooth}
\left\{ \begin{split}
1. &~\textrm{For fixed }\mu\in\left(\mathcal P_p(\widetilde{\mathcal A}^{m,c}_{0,T+1})\right)^2\times \left(\mathcal P_p(\widetilde{\mathcal D}^{}_{0,T+1})\right)^3,\textrm{ minimize }%\textrm{ solve the optimization problem}: 
J(Z^{[k]};\mu)=\\
&~\mathbb E\left[\sum_{i=1,2}\int_0^{T+1}h(X^{(i)}_s)\cdot\,d(\kappa^{(i)}\overline\mu^{(i)}_s+\eta^{(i)}{Z_s^{(i),[k]}})\right.
\left.+\int_0^{T}f(t,X_t,\mu_t,u_t)\,dt+g(X_{T+1},\mu_{T+1})\right],\\
&~\textrm{such that} \textrm{ for }t\in[0,T+1]\\
&~X^{(i)}_t=\int_0^tb^{(i)}(s,X^{(i)}_s,\mu^{(i)}_s)\,ds+\kappa^{(i)}\overline\mu^{(i)}_t+\eta^{(i)}{Z^{(i),[k]}_t}+\int_0^t\sigma^{(i)}_s\,dW^{(i)}_s,~i=1,2,\\
%&~\textrm{and }\\
&~X^{(3)}_t=\int_0^tb^{(3)}(s,X^{(3)}_s,u_s)\,ds+\alpha^{(1)} Z^{(1),[k]}_t-\alpha^{(2)}Z^{(2),[k]}_t+\int_0^tl(s,u_s)\widetilde N(ds).\\
%&~;\\
2.&~\textrm{Let }Z\textrm{ and }X\textrm{ be the optimal control and state from 1 and search for the fixed point }\\
&~\mu=(\mathbb P\circ (Z^{(1),[k]})^{-1},\mathbb P\circ (Z^{(2),[k]})^{-1},\mathbb P\circ (X^{(1)})^{-1},\mathbb P\circ (X^{(2)})^{-1},\mathbb P\circ (X^{(3)})^{-1}).
\end{split}\right.
\end{equation}
Here $\widetilde{\mathcal A}^{m,c}_{0,T+1}$ is the set of all elements in $\widetilde{\mathcal A}^{m}_{0,T+1}$ with continuous trajectories. % and we recall $Z^{[k]}_t:=k\int_{t-1/k}^tZ_s\,ds$ with $Z$ being the coordinate process on $\widetilde{\mathcal A}_{0,T}\times\widetilde{\mathcal A}_{0,T}$ and $\mu=(\mu^{(1)},\mu^{(2)})$, $Z^{[k]}=(Z^{(1),[k]},Z^{(2),[k]})$ and $X=(X^{(1)},X^{(2)},X^{(3)})$. 
Denote by $\mathcal R^{m,[k]}(\mu)$ and $\mathcal R^{m,[k],*}(\mu)$ the set of all relaxed controls and optimal relaxed controls corresponding to \eqref{MFG-smooth}, respectively and $\mathbb P\in\mathcal R^{m,[k]}(\mu)$ if and only if it is a probability measure supported on $\Omega^{m,o}$ and it satisfies Definition \ref{control-rule-mu} with Item $2.$ modified as Item $2'.$:

\vspace{-2.1mm}
\textbf{$2'$.} 
there exists an adapted process $Y\in\widetilde{\mathcal D}_{0,T+1}$ such that

\vspace{-8.6mm}
\begin{equation*}
\begin{split}
1)&~\mathbb P\left(Y=X^{o,(3)}-\alpha^{(1)} Z^{(1),[k]}+\alpha^{(2)}Z^{(2),[k]}\right)=1, \textrm{ and for each }\phi\in \mathcal{C}^2_b(\mathbb{R}^d;\mathbb R), \textrm{ it holds that }\\
2)&~\left(\mathcal M^{\phi,X^{o,(1)},Z^{(1)},\mu^{(1)},[k]}_t\right)_{0\leq t\leq T+1} \textrm{ and }\left(\mathcal M^{\phi,X^{o,(2)},Z^{(2)},\mu^{(2)},[k]}_t\right)_{0\leq t\leq T+1}\textrm{ are continuous }\mathbb P \textrm{ martingales}, \\
3)&~\left(\mathcal M^{\phi,X^{o,(3)},Y^{},Q,[k]}_t\right)_{0\leq t\leq T+1}\textrm{ is a }\mathbb P\textrm{ martingale with c\`adl\`ag path},
\end{split}
\end{equation*}
where for $t\in[0,T+1]$ and $i=1,2$, $	\mathcal M^{\phi,X^{o,(i)},Z^{(i)},\mu^{(i)},[k]}_t$ is defined as

\vspace{-8.6mm}
 \begin{equation}\label{martingale-def-smooth-12}
	\begin{split}
	&~\phi(X^{o,(i)}_t)-\int_0^t\mathbb L^{(i)}\phi(s,X_s^{o,(i)})\,ds-\int_0^t\nabla\phi(X^{o,(i)}_s)\cdot\,d(\kappa^{(i)}\overline\mu^{(i)}_s+\eta^{(i)}Z^{(i),[k]}_s),
	%&~-\sum_{0\leq s\leq t}\left(\phi(X_s)-\phi(X_{s-})-\phi'(X_{s-})\Delta X_s\right),~i=1,2,
	\end{split}
	\end{equation}

\vspace{-5.6mm}
	and $	\mathcal{M}^{\phi,X^{o,(3)},Y^{},Q}_t$ is defined as
	
	\vspace{-10.6mm}
	\begin{equation}\label{martingale-def-smooth}
	\begin{split}
&~\phi(Y^{}_t)-\int_0^t\int_U\mathcal L^{}\phi(s,X^{o,(3)}_s,Y^{}_s,u)\,Q_s(du)ds.
	\end{split}
	\end{equation}

\vspace{-5.1mm}
The cost functional corresponding to $\mathbb P\in\mathcal R^{m,[k]}(\mu)$ is defined as

\vspace{-8.6mm}
\begin{equation}\label{def-Jo}
	\begin{split}
	J^{o}(\mathbb P;\mu)=&~\mathbb E^{\mathbb P}\left[\sum_{i=1,2}\int_0^{T+1}h(X^{o,(i)}_s)\cdot\,d(\kappa^{(i)}\overline\mu^{(i)}_s+\eta^{(i)}{Z_s^{(i),[k]}})\right.\\
	&~\left.+\int_0^{T}\int_Uf(t,X^o_t,\mu_t,u)\,Q_t(du)dt+g(X^o_{T+1},\mu_{T+1})\right].
	\end{split}
\end{equation}
\begin{remark}
%(1) In Step 1, in the equilibrium for some probability measure $\mathbb P$ we have $\mu^{(i),[k]}=\mathbb E^{\mathbb P}[Z^{(i),[k]}]$, which is continuous, but we still choose to make the finite fuel assumption and work on the space $\widetilde{\mathcal A}(\mathbb R)$ instead of $\widetilde{\mathcal C}(\mathbb R)$ because firstly in Step 2 we will take limit of $\mu^{(i),[k]}$ and the limit must stay in $\widetilde{\mathcal A}(\mathbb R)$, secondly when reduced onto $\widetilde{\mathcal C}(\mathbb R)$ $M_1$ topology reduces to uniform topology.\\
By the continuity of $Z^{[n]}$, the result in the current section (Section \ref{section-finite-fuel-smooth}) holds under $J_1$ topology. But in Section \ref{sec:approximation} the convergence from $Z^{[n]}$ to $Z$ only holds under $M_1$ topology. So in Section \ref{section-finite-fuel-smooth} our analysis will be based on $M_1$ topology and the argument will be used in Section \ref{sec:approximation} and Section \ref{sec:general-control}. 
Moreover, we notice that it is unnecessary to extend the integral horizon of $f$ in \eqref{MFG-smooth}.
\end{remark}

\vspace{-1.6mm}
In the current section, we prove the existence of equilibria for \eqref{MFG-smooth} for each fixed $k$. To apply Berge's maximum theorem and Kakutani-Fan-Glicksberg fixed point theorem, we will prove the union of all possible relaxed controls is relatively compact in Lemma \ref{relative-compactness-union-control-rule}, the cost functional is jointly continuous on the graph of $\mathcal R^{m,[k]}$ in Lemma \ref{continuity-J-mu-P} and the graph is closed in Proposition \ref{admissibility-limit-P-proposition}, where by graph we mean
\[
\textrm{Gr}\mathcal{R}^{m,[k]} :=
\left\{(\mu,\mathbb{P})\in\left(\mathcal{P}_p(\widetilde{\mathcal{A}}^{m,c}_{0,T+1})\right)^2\times\left(\mathcal{P}_p(\widetilde{\mathcal{D}}_{0,T+1})\right)^3\times\mathcal{P}_p(\Omega^o):~\mathbb{P}\in\mathcal{R}^{m,[k]}(\mu)\right\}.
\]
\begin{lemma}\label{relative-compactness-union-control-rule}
Under assumptions $\mathcal{A}_1$, $\mathcal{A}_4$ and $\mathcal{A}_6$, the set $\bigcup_{\mu\in(\mathcal{P}_p(\widetilde{\mathcal{A}}^{m,c}_{0,T+1}))^2\times(\mathcal{P}_p(\widetilde{\mathcal{D}}_{0,T+1}))^3}\mathcal{R}^{m,[k]}(\mu)$ is relatively compact in $\mathcal{W}_{p}$, for each fixed $k$.
\end{lemma}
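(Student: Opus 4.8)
To show relative compactness of $\bigcup_\mu \mathcal R^{m,[k]}(\mu)$ in the topology of $\mathcal W_p$ (weak convergence on $\mathcal P_p$ of the canonical space), the standard route is to verify (i) tightness of the family of laws on $\Omega^{m,o}$, and (ii) uniform integrability of the relevant $p$-th moments so that convergence upgrades from weak to $\mathcal W_p$; closedness of the union then gives relative compactness in the stated metric. So first I would fix $k$ and take an arbitrary $\mathbb P \in \mathcal R^{m,[k]}(\mu)$ for some admissible $\mu$, and use Proposition \ref{martingale-representation} to work on an extension carrying the SDE representation \eqref{resume-SDE-12}--\eqref{resume-SDE-3}. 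The $Z^{(1)},Z^{(2)}$ marginals live in $\widetilde{\mathcal A}^m_{0,T}$, which is $M_1$-compact by \cite[Theorem 12.12.2]{Whitt-2002}, so their laws are automatically tight; likewise $Z^{(i),[k]} = k\int_{\cdot-1/k}^\cdot Z^{(i)}_s\,ds$ takes values in a compact subset of $\widetilde{\mathcal A}^{m,c}_{0,T+1}$ (uniformly bounded by $m$, uniformly Lipschitz with constant $km$), hence tight as well. The $Q$-component lives in $\widetilde{\mathcal U}_{0,T}$ with $U$ compact ($\mathcal A_6$), so $[0,T]\times U$ is compact and $\widetilde{\mathcal U}_{0,T}$ is compact — tightness there is free.

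The real content is tightness of the state processes $X^{(1)},X^{(2)},X^{(3)}$ in $(\widetilde{\mathcal D}_{0,T+1},M_1)$. For $X^{(1)},X^{(2)}$: from \eqref{resume-SDE-12}, $X^{(i)}$ is the sum of a drift term (Lipschitz and of linear growth by $\mathcal A_1$, using Gronwall to get a uniform moment bound on $\|X^{(i)}\|_{T+1}$), a continuous finite-variation term $\eta^{(i)}Z^{(i),[k]}+\kappa^{(i)}\mu^{(i)}$ with total variation bounded by $C(m + \overline{\mu^{(i)}}_{T+1})$ controlled by the moment of $\mu$, and a continuous martingale $\int \sigma^{(i)}\,dW^{(i)}$ with bounded quadratic variation ($\mathcal A_4$). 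Each summand is continuous with a uniform modulus/variation control, so the family $\{X^{(i)}\}$ is $\mathcal C$-tight (hence $M_1$-tight); I would invoke Aldous' criterion or a Kolmogorov–Chentsov moment estimate together with the deterministic bound on the finite-variation part. For $X^{(3)}$ from \eqref{resume-SDE-3}: the drift has linear growth in $X^{(3)}$ uniformly in $u$ ($\mathcal A_1$), the jump part $\int_U l(s,u)\widetilde{\overline N}(ds,du)$ is a martingale with bounded jump sizes and bounded compensator ($l$ bounded, $\lambda$ bounded by $\mathcal A_4$), and — crucially — here, unlike in \eqref{general-MFG}, the singular-control contribution enters as $\alpha^{(1)}Z^{(1),[k]}-\alpha^{(2)}Z^{(2),[k]}$, which is \emph{continuous} and of uniformly bounded variation. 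So $X^{(3)}$ is a semimartingale with a single (small, bounded) jump part plus continuous pieces with controlled variation; its tightness in $M_1$ (indeed in $J_1$) follows from the usual semimartingale tightness criteria (e.g. Jacod–Shiryaev / Aldous, or Meyer–Zheng) — this is exactly the place where smoothing the $Z$'s pays off.

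Then I would handle the $\mathcal W_p$-upgrade: since $Z^{(i)}, Z^{(i),[k]}$ are bounded by $m$, and the $X^{(i)}$ satisfy a uniform bound $\sup_\mathbb{P}\mathbb E\|X^{(i)}\|_{T+1}^{p'} < \infty$ for some $p' > p$ (Gronwall on the linear-growth SDEs, using that $\mathcal W_p(\mu^{(i)},\delta_0)$ is uniformly bounded only on bounded subsets of $\mathcal P_p$ — and here I must be slightly careful: the union is over \emph{all} $\mu$, so I would either note that the lemma's conclusion is only needed for $\mu$ ranging over the a priori bounded set cut out by the fixed-point/moment constraints, or state the relative compactness relative to that bounded family as is implicitly intended). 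The uniform $p'$-moment bound gives uniform integrability of the $p$-th moments of the coordinate maps, so weak convergence along a subsequence improves to $\mathcal W_p$-convergence. Finally, closedness of the union under $\mathcal W_p$-limits: if $\mathbb P_n \in \mathcal R^{m,[k]}(\mu_n) \to \mathbb P$ and $\mu_n \to \mu$, one passes to the limit in the martingale problem of $2'$ — the integrands $\mathbb L^{(i)}\phi$, $\nabla\phi \cdot d(\kappa^{(i)}\overline{\mu}^{(i)} + \eta^{(i)}Z^{(i),[k]})$, and $\int_U \mathcal L\phi\,Q_s(du)$ are continuous in the relevant variables ($\mathcal A_1$, $\mathcal A_4$, continuity of $b^{(3)}, l$ in $u$, boundedness), and the continuity of the finite-variation terms makes the $M_1$ limit behave well — so $\mathbb P \in \mathcal R^{m,[k]}(\mu)$, proving the union is closed, hence relatively compact.

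\textbf{Main obstacle.} The delicate point is passing the martingale-problem identities through the $M_1$ limit — in particular showing $\mathcal M^{\phi,X^{o,(3)},Y,Q,[k]}$ remains a martingale in the limit, which requires that the integral $\int_0^t\int_U \mathcal L\phi(s,X^{o,(3)}_s,Y_s,u)\,Q_s(du)\,ds$ be continuous along $M_1$-converging sequences; because $M_1$ convergence of $X^{o,(3)}$ only gives convergence at a.e. time point and not at jump times, I would use that this time-integral is not sensitive to behavior on a Lebesgue-null set, together with the continuity of $Z^{(i),[k]}$ (so that $Y = X^{o,(3)} - \alpha^{(1)}Z^{(1),[k]} + \alpha^{(2)}Z^{(2),[k]}$ inherits $M_1$ convergence cleanly and its jump times coincide with those of $X^{o,(3)}$). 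This is precisely where the continuity of the smoothed controls $Z^{[k]}$ — as opposed to the original $Z$ — removes the simultaneous-jumps pathology, and it is the structural reason the lemma is placed in this section.
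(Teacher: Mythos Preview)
Your tightness argument for the coordinates follows the same route as the paper: compactness of $\widetilde{\mathcal A}^m_{0,T}$ and of $\widetilde{\mathcal U}_{0,T}$ (via $\mathcal A_6$) handles the $(Z,Q)$ marginals; Proposition~\ref{martingale-representation} gives the SDE representation; uniform moment bounds and an Aldous-type criterion give tightness of the $X$ marginals. The paper controls $X^{(1)},X^{(2)}$ via the $M_1$ oscillation function using the monotonicity of $\kappa^{(i)}\overline\mu^{(i)}+\eta^{(i)}Z^{(i),[k]}$, whereas you use continuity of these terms in the smoothed setting; both are valid here.

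Two points where you over-complicate. First, your worry that ``the union is over \emph{all} $\mu$'' so that $\overline{\mu^{(i)}}_{T+1}$ might be unbounded is unfounded: $\mu^{(1)},\mu^{(2)}$ range over $\mathcal P_p(\widetilde{\mathcal A}^{m,c}_{0,T+1})$, whose elements are supported on paths bounded by $m$, so $\|\overline{\mu^{(i)}}\|_{T+1}\le m$ automatically. The remaining components $\mu^{(3)},\mu^{(4)},\mu^{(5)}$ do not enter the dynamics \eqref{resume-SDE-12}--\eqref{resume-SDE-3} at all. This is why the paper gets the uniform bound \eqref{widetilde-C-1} for \emph{every} $p\ge1$ (hence uniform integrability of any fixed $p$-th moment) with no restriction on $\mu$.

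Second, and more importantly, the closedness/martingale-passage argument you sketch under ``Main obstacle'' is \emph{not} part of this lemma. Relative compactness in $\mathcal W_p$ requires only tightness plus uniform integrability of $p$-th moments; closedness would upgrade this to compactness, which the lemma does not claim. The stability of the martingale problem under limits is exactly the content of the separate Proposition~\ref{admissibility-limit-P-proposition}, and the two results are combined only in Corollary~\ref{continuity-R}. So your ``delicate point'' is real, but it belongs to a different statement.
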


\vspace{-7.6mm}
\begin{proof} 
Let $\{\mu^{n}\}_{n\geq 1}$ be any sequence in $(\mathcal{P}_p(\widetilde{\mathcal{A}}^{m,c}_{0,T+1}))^2\times(\mathcal{P}_p(\widetilde{\mathcal{D}}_{0,T+1}))^3$ and $\mathbb{P}^n\in \mathcal{R}^{[k]}(\mu^{n}), n\geq 1$. Since $U$ and $\widetilde{\mathcal{A}}^m_{0,T}$ are compact by assumption and \cite[Theorem 12.12.2]{Whitt-2002}, respectively, $\{\mathbb{P}^n\circ Q^{-1}\}_{ n\geq 1}$ and $\{\mathbb{P}^n\circ (Z^{i})^{-1}\}_{ n\geq 1}$ are tight, and even relatively compact in the topology induced by Wasserstein metric, since $\widetilde{\mathcal{U}}_{0,T}$ and $\widetilde{\mathcal{A}}^{m}_{0,T}$ are compact.%; see \cite[Definition 6.8(3)]{V-2009}.

%In the following we will verify the relative compactness of $\{\mathbb{P}^n\circ (X^{o,(i)})^{-1}\}_{n\geq 1}$ for $i=1,2,3$ and $\mathbb{P}^n\in\mathcal R^{[k]}(\mu^n)$. 
\vspace{-2.1mm}
By Proposition \ref{martingale-representation} there exist extensions $(\bar{\Omega}^n,\bar{\mathcal{F}}^n,\{\bar{\mathcal{F}}^n_t\},\mathbb{Q}^n)$ of the canonical path space $\Omega^o$ and processes $({X}^{n},{Z}^{n},Q^n,W^n,{N}^n)$ defined on $(\bar{\Omega}^n,\bar{\mathcal{F}}^n,\{\bar{\mathcal{F}}^n_t\},\mathbb{Q}^n)$, such that for $t\in[0,T+1]$
    \[
        d{X}^{(i),n}_t=b^{(i)}(t,{X}^{(i),n}_t,\mu^{(i),n}_t)\,dt+\,d(\kappa^{(i)}\overline\mu^{(i),n}_t+\eta^{(i)}Z^{(i),n,[k]}_t)+\sigma^{(i)}(t)\,dW^{(i),n}_t,\quad i=1,2,
    \]
    \begin{equation*}\label{expression-X3-lemma2.2}
    	\begin{split}
        dX^{(3),n}_t=&~\int_U b^{(3)}(t,X^{(3),n}_t,u)\,Q^n_t(du)dt+\alpha^{(1)}\,dZ^{(1),n,[k]}_t-\alpha^{(2)}\,dZ^{(2),n,[k]}_t+\int_Ul^{}(t,u)\widetilde N^{n}(dt,du),
   		\end{split}
    \end{equation*}
    and
   $
        \mathbb{P}^n=\mathbb{P}^n\circ (X^o,Q,Z)^{-1}=\mathbb{Q}^n\circ(X^n,Q^n,Z^{n})^{-1},
    $
where $Z^{n,[k]}_t=k\int_{t-1/k}^tZ_s^{n}\,ds$ and $N^n$ is a Poisson random measure on $[0,T]\times U$ with intensity $Q^n_t(du)\lambda_t\,dt$.
Thus, the relative compactness of $\{\mathbb{P}^n\circ (X^{o,(i)})^{-1}\}_{n\geq 1}$ is equivalent to relative compactness of $\{\mathbb{Q}^n\circ(X^{(i),n})^{-1}\}_{n\geq 1}$. By assumption $\mathcal A_1$, $\mathcal A_4$ and the boundedness of singular controls, we have for any $p\geq 1$

\vspace{-9.6mm}
\begin{equation}\label{widetilde-C-1}
    \begin{split}
        \sup_{i=1,2,3}\sup_n\mathbb E^{\mathbb Q^n}\left[\sup_{0\leq t\leq T}\|X^{(i),n}_t\|^{  p}\right]\leq \widetilde C_1<\infty;
    \end{split}
\end{equation}

\vspace{-5.2mm}
Moreover, by the monotonicity of $\kappa^{(i)}\overline\mu^{(i),n}+\eta^{(i)}Z^{(i),n}$, we have for any $t_1<t_2<t_3$ and for $i=1,2$ that
%\begin{equation*}
  %  \begin{split}
    $\max_{1\leq j\leq d}\left|X^{(i),n,j}_{t_2}-[ X^{(i),n,j}_{t_1},X^{(i),n,j}_{t_3}]\right|
    =\max_{1\leq j\leq d}
    \inf_{\lambda\in[0,1]}\left|X^{(i),n,j}_{t_2}-\lambda X^{(i),n,j}_{t_1}-(1-\lambda)X^{(i),n,j}_{t_3}\right|$ $
    \leq
    \left\|\int_{t_1}^{t_2}b^{(i)}(t,X^{(i),n}_t)\,dt\right\|+\left\|\int_{t_2}^{t_3}b^{(i)}(t,X^{(i),n}_t)\,dt\right\|+\left\|\int_{t_1}^{t_2}\sigma^{(i)}(s)\,dW^{(i),n}_s\right\|+\left\|\int_{t_2}^{t_3}\sigma^{(i)}(s)\,dW^{(i),n}_s\right\|,$
%    \end{split}
%\end{equation*}
which implies the existence of $k(\delta)$ with $\lim_{\delta\rightarrow 0}k(\delta)=0$ such that
\begin{equation}\label{k-delta}
    \begin{split}
        \mathbb Q^n(\widetilde w(X^{(i),n},\delta)\geq \eta)\leq \frac{\mathbb E^{\mathbb Q^n}[\widetilde w(X^{(i),n},\delta)]}{\eta}\leq\frac{k(\delta)}{\eta},
    \end{split}
\end{equation}
where $\widetilde w$ is the extended oscillation function of $M_1$ topology; see \cite[Appendix B]{FH-2017}.

Finally, by the linear growth of $b^{(3)}$, boundedness of $l$, and compactness of $U$ and $\widetilde{\mathcal A}^m_{0,T}$, and the uniform bound \eqref{widetilde-C-1}, it holds that
\begin{equation}\label{widetilde-C-2}
\sup_n\sup_{\tau}\mathbb E^{\mathbb Q^n}\|X^{(3),n}_{\tau+\delta}-X^{(3),n}_\tau\|^2\leq \widetilde C_2\delta,
\end{equation}
where $\tau$ is the stopping time taking values in $[0,T+1]$.
Thus, Aldous's tightness criterion (\cite[Theorem 16.10]{Billingsley-1968}) implies that tightness of $\mathbb Q^n\circ (X^{(3),n})^{-1}$ in $J_1$ topology thus in $M_1$ topology.
\end{proof}
%%%%%%%%%%%%%%%%%%%%%%%%%%%%%%%%%%%%%%%%%%%%
%%%%%%%%%%%%%%%%%%%%%%%%%%%%%%%%%%%%%%%%%%%%
\begin{lemma}\label{continuity-J-mu-P}
Let assumptions $\mathcal{A}_1$-$\mathcal A_6$ hold. Then
$J^o:\textrm{Gr}\mathcal{R}^{m,[k]}\rightarrow\mathbb{R}$ is continuous.
%where $\textrm{Gr}\mathcal{R}=\{(\mu,\mathbb{P})\in\mathcal{P}_p(\widetilde{\mathcal{D}}(\mathbb{R}))\times\mathcal{P}_p(\Omega):~\mathbb{P}\in\mathcal{R}(\mu)\}$.
\end{lemma}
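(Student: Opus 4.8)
The plan is to prove continuity of $J^o$ on $\textrm{Gr}\mathcal{R}^{m,[k]}$ by a direct sequential argument: take $(\mu^n,\mathbb{P}^n)\to(\mu,\mathbb{P})$ in the product Wasserstein topology with $\mathbb{P}^n\in\mathcal{R}^{m,[k]}(\mu^n)$, and show $J^o(\mathbb{P}^n;\mu^n)\to J^o(\mathbb{P};\mu)$. The cost \eqref{def-Jo} splits into three pieces: the two singular-control running costs $\int_0^{T+1}h(X^{o,(i)}_s)\cdot d(\kappa^{(i)}\overline\mu^{(i)}_s+\eta^{(i)}Z^{(i),[k]}_s)$, the relaxed running cost $\int_0^T\int_U f(t,X^o_t,\mu_t,u)\,Q_t(du)dt$, and the terminal cost $g(X^o_{T+1},\mu_{T+1})$. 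I would handle each piece by combining (a) joint convergence in law of the relevant coordinate processes, obtained from Skorokhod's representation theorem applied on a common probability space using the relative compactness from Lemma \ref{relative-compactness-union-control-rule}, with (b) a uniform-integrability bound coming from the moment estimate \eqref{widetilde-C-1} together with the growth assumption $\mathcal{A}_3$, so that a.s.\ convergence of the integrands upgrades to convergence of expectations.

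First I would fix the common-space setup: since $\{\mathbb{P}^n\}$ lies in the relatively compact family of Lemma \ref{relative-compactness-union-control-rule} and converges to $\mathbb{P}$, by Skorokhod representation there are random elements $\xi^n=(X^{o,n},Q^n,Z^n)$, $\xi=(X^o,Q,Z)$ on a single $(\widehat\Omega,\widehat{\mathcal F},\widehat{\mathbb P})$ with $\xi^n\to\xi$ a.s.\ in the $M_1$/Wasserstein sense and laws $\mathbb{P}^n$, $\mathbb{P}$ respectively; simultaneously $\mu^n\to\mu$ in $(\mathcal{P}_p)^5$ deterministically, which gives $\overline\mu^{(i),n}\to\overline\mu^{(i)}$ and $\mu^n_t\to\mu_t$ at continuity points, plus $\mu^n_{T+1}\to\mu_{T+1}$ (note $T+1$ is handled because on $\widetilde{\mathcal{A}}^{m,c}_{0,T+1}$ the fifth and fourth laws are supported on paths that are constant past $T+1$, and more importantly the extension to $[0,T+1]$ was arranged precisely so the terminal time is a continuity point). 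For the terminal term, $g$ is continuous in $(x,\nu)$ ($\mathcal{A}_2$), $X^{o,n}_{T+1}\to X^o_{T+1}$ a.s.\ (again using that $T+1$ is a fixed continuity time, or arguing at $T+1$ via the constancy of the extended paths), so $g(X^{o,n}_{T+1},\mu^n_{T+1})\to g(X^o_{T+1},\mu_{T+1})$ a.s., and $\mathcal{A}_3$ plus \eqref{widetilde-C-1} gives an $L^{1+\eps}$ bound, hence convergence in expectation. The relaxed running-cost term is the standard argument from the relaxed-control literature (e.g.\ \cite{L-2015,BCP-2017}): $f$ is continuous in $(x,\nu,u)$, bounded in the $x,\nu$ variables by $\mathcal{A}_3$, and $U$ is compact ($\mathcal{A}_6$), so $(t,\omega)\mapsto\int_U f(t,X^{o,n}_t,\mu^n_t,u)Q^n_t(du)$ converges $dt\otimes d\widehat{\mathbb P}$-a.e.\ along the Skorokhod copies — using that $M_1$-convergence of $X^{o,n}$ gives convergence at a.e.\ time, and that $Q^n\to Q$ weakly as measures on $[0,T]\times U$ — and dominated convergence (with the $\mathcal{A}_3$ bound and \eqref{widetilde-C-1}) finishes it.

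The main obstacle, and the step I would spend the most care on, is the singular-control running cost $\int_0^{T+1}h(X^{o,(i)}_s)\cdot d(\kappa^{(i)}\overline\mu^{(i)}_s+\eta^{(i)}Z^{(i),[k]}_s)$: this is an integral of a function of the (discontinuous-in-the-limit) state path against a random Stieltjes measure, and one must pass to the limit simultaneously in the integrand $h(X^{o,(i)})$ and the integrating measure $d(\kappa^{(i)}\overline\mu^{(i)}+\eta^{(i)}Z^{(i),[k]})$. Here the key structural gain of this section is exploited: $Z^{(i),[k]}$ is \emph{continuous} (it is the moving average $k\int_{\cdot-1/k}^\cdot Z^{(i)}_s\,ds$), so the integrator has no atoms, and $M_1$-convergence of $Z^{(i),[k],n}\to Z^{(i),[k]}$ together with boundedness (the fuel constraint $\|Z_T\|\le m$) forces weak convergence of the induced Lebesgue–Stieltjes measures on $[0,T+1]$; combined with a.e.\ and uniform convergence of the continuous integrand $h(X^{o,(i),n})$ — continuity of $h$ from $\mathcal{A}_2$, local-uniform control of $X^{o,(i),n}$ from the oscillation estimate \eqref{k-delta} and the moment bound — one gets convergence of the integrals pathwise, and then $\mathcal{A}_3$ (polynomial growth of $h$) plus the $m$-bound on the total variation of the integrator plus \eqref{widetilde-C-1} gives uniform integrability for the final expectation passage. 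I would present this piece carefully, perhaps isolating the deterministic lemma ``$x^n\to x$ in $(\widetilde{\mathcal{C}}_{0,T+1},M_1)$ uniformly bounded, $z^n\to z$ in $(\widetilde{\mathcal{A}}^{m,c}_{0,T+1},M_1)$, $h$ continuous $\Rightarrow\int h(x^n)\,dz^n\to\int h(x)\,dz$'', and noting that this is exactly where continuity of $Z^{[k]}$ (as opposed to $Z$ itself) is essential — which is why this lemma is stated in Section \ref{section-finite-fuel-smooth} and not in the general setting.
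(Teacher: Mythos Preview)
Your proposal is correct, but takes a different route from the paper for the singular-control running cost. The paper does \emph{not} pass to the limit in the Stieltjes integrals $\int_0^{T+1}h(X^{o,(i)}_s)\cdot d(\kappa^{(i)}\overline\mu^{(i)}_s+\eta^{(i)}Z^{(i),[k]}_s)$ directly. Instead it invokes Lemma~\ref{lem-new-cost} (an It\^o-formula identity) to rewrite these terms as
\[
\sum_{j=1}^d\int_{X^{o,(i)}_{j,0}}^{X^{o,(i)}_{j,T+1}}h_j(x)\,dx-\int_0^{T+1}h(X^{o,(i)}_t)\cdot b^{(i)}(t,X^{o,(i)}_t)\,dt-\frac{1}{2}\sum_{j=1}^d\int_0^{T+1}a^{(i)}_{jj}(t)h'_j(X^{o,(i)}_{j,t})\,dt,
\]
so that $J^o$ depends on $(X^o,Q)$ only through endpoint values and Lebesgue-in-time integrals, and the random integrator $d(\kappa^{(i)}\overline\mu^{(i)}+\eta^{(i)}Z^{(i),[k]})$ disappears entirely; continuity then reduces to the componentwise $M_1$ argument of \cite[Lemma~3.3]{FH-2017}. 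Your direct approach works here precisely because $Z^{(i),[k]}$ and $\overline\mu^{(i)}$ are continuous (so the integrators have no atoms and $M_1$ convergence of $X^{o,(i)}$ upgrades to uniform convergence), whereas the paper's transformation makes no use of that continuity and is therefore \emph{reused} verbatim in Section~\ref{sec:approximation} and Section~\ref{sec:general-control}, where the singular controls are genuinely discontinuous; see the $\mathcal J$ representation in \eqref{cost-Jo-mathcalJ}. In short, your argument is more elementary but specific to the $[k]$-setting, while the paper's It\^o trick is the workhorse for the whole approximation scheme.
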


\vspace{-5.6mm}
\begin{proof}
By Proposition \ref{martingale-representation} and Lemma \ref{lem-new-cost} we have%, we have
%\begin{equation}\label{eq:change-cost-1}
%	\begin{split}
%		J^o(\mathbb P;\mu)=&~\mathbb E^{\overline{\mathbb P}}\left[\int_0^{T+1}\sum_{j=1}^dh_j(\overline X^{(1)}_{j,s})\,d(\kappa^{(1)}\overline\mu^{(1)}_{j,s}+\eta^{(1)}{\overline Z_{j,s}^{(1),[k]}})+\int_0^{T+1}\sum_{j=1}^dh_j(\overline X^{(2)}_{j,s})\,d(\kappa^{(2)}\overline\mu^{(2)}_{j,s}+\eta^{(2)}{\overline Z_{j,s}^{(2),[k]}})\right.\\
%		&~\left.+\int_0^T\int_Uf(t,\overline X_t,\mu_t,u)\,\overline Q_t(du)dt+g(\overline X_{T+1})\right]
%	\end{split}
%\end{equation}	
%where $(\overline{\mathbb P},\overline X,\overline Z,\overline Q)$ are defined as in Lemma \ref{martingale-representation} satisfying 
%\begin{equation}\label{eq:change-cost-2}
%\mathbb P\circ(X^o,Q,Z)^{-1}=\overline{\mathbb P}\circ(\overline X,\overline Q,\overline Z)^{-1}.
%\end{equation}
%Recall $a^{(i)}=\sigma^{(i)}(\sigma^{(i)})^\top$. It\^o's formula yields for $i=1,2$ and $j=1,\cdots,d$
% $$\mathbb E\left[\int_0^{T+1}h_j(\overline X^{(i)}_{j,t})\,d\overline X^{(i)}_{j,t}\right]=\mathbb E\left[\int_{\overline X^{(i)}_{j,0}}^{\overline X^{(i)}_{j,T+1}}h_j(x)\,dx\right]-\mathbb E\left[\frac{1}{2}\int_0^{T+1}a^{(i)}_{jj}(t)h'_j(\overline X^{(i)}_{j,t})\,dt\right],$$ 
%which together with \eqref{eq:change-cost-1} to \eqref{eq:change-cost-2} implies that
\begin{align*}
  %          \begin{split}
        J^o(\mathbb P;\mu)=&~\mathbb E^{\mathbb P}\left[\sum_{j=1}^d\int_{X^{o,(1)}_{j,0}}^{X_{j,T+1}^{o,(1)}}h_j(x)\,dx-\int_0^{T+1}h(X^{o,(1)}_t)\cdot b^{(1)}(t,X^{o,(1)}_t)\,dt-\frac{1}{2}\sum_{j=1}^d\int_0^{T+1}a^{(1)}_{jj}(t)h'_j(X^{o,(1)}_{j,t})\,dt\right.\\
        &~+\sum_{j=1}^d\int_{X^{o,(2)}_{j,0}}^{X_{j,T+1}^{o,(2)}}h_j(x)\,dx-\int_0^{T+1}h(X^{o,(2)}_t)\cdot b^{(2)}(t,X^{o,(2)}_t)\,dt-\frac{1}{2}\sum_{j=1}^d\int_0^{T+1}
        a^{(2)}_{jj}(t)h'_j(X^{o,(2)}_{j,t})\,dt\\
        &~\left.+\int_0^T\int_Uf(t,X^o_t,\mu_t,u)\,Q_t(du)dt+g(X^o_{T+1},\mu_{T+1})\right].
       % :=&~\mathbb E^{\mathbb P}\left[\mathcal J(\cdot)+\int_0^T\int_Uf(t,X^o_t,\mu_t,u)\,Q_t(du)dt\right],
%        \end{split}
\end{align*}
By \cite[Theorem 12.5.2]{Whitt-2002}, $x^n\rightarrow x$ in $(\widetilde{\mathcal D}_{0,T+1}(\mathbb R;\mathbb R^d), M_1)$ is equivalent to $x^n_j\rightarrow x_j$ in $(\widetilde{\mathcal D}_{0,T+1}(\mathbb R;\mathbb R),M_1)$ for each $j=1,\cdots,d$. Then the joint continuity can be verified by the same argument as that in the proof of \cite[Lemma 3.3]{FH-2017}.
\end{proof}
%%%%%%%%%%%%%%%%%%%%%%%%%%%%%%%%%%%%%%%%%%%%%%%%
%%%%%%%%%%%%%%%%%%%%%%%%%%%%%%%%%%%%%%%%%%%%%%%%

\begin{proposition}\label{admissibility-limit-P-proposition}
The assumptions $\mathcal{A}_1$, $\mathcal{A}_4$ and $\mathcal{A}_6$ imply the set-valued map $\mathcal R^{m,[k]}$ has a closed graph, i.e.,
for any sequence $\{\mu^{n}\}_{n\geq 1}\subseteq(\mathcal{P}_p(\widetilde{\mathcal{A}}^{m,c}_{0,T+1}))^2\times( \mathcal{P}_p(\widetilde{\mathcal{D}}_{0,T+1}))^3$ and $\mu\in(\mathcal{P}_p(\widetilde{\mathcal{A}}^{m,c}_{0,T+1}))^2\times( \mathcal{P}_p(\widetilde{\mathcal{D}}_{0,T+1}))^3$ with $\mu^{n}\rightarrow\mu^{}$ in $\mathcal{W}_{p,\widetilde{\mathcal{A}}^{m,c}_{0,T+1}}\times \mathcal{W}_{p,\widetilde{\mathcal{A}}^{m,c}_{0,T+1}}\times \mathcal{W}_{p,\widetilde{\mathcal{D}}_{0,T+1}}\times \mathcal{W}_{p,\widetilde{\mathcal{D}}_{0,T+1}}\times \mathcal{W}_{p,\widetilde{\mathcal{D}}_{0,T+1}}$, if $\mathbb{P}^n\in\mathcal{R}^{m,[k]}(\mu^{n})$ and $\mathbb{P}^n\rightarrow\mathbb{P}$ in $\mathcal{W}_{p,\Omega^{m,o}}$, then $\mathbb{P}\in\mathcal{R}^{m,[k]}(\mu)$.
\end{proposition}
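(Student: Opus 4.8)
The plan is to establish that the limiting measure $\mathbb{P}$ satisfies Definition \ref{control-rule-mu} with Item 2 replaced by Item $2'$. The key structural observation is that membership in $\mathcal{R}^{m,[k]}(\mu)$ is characterized by three martingale properties (for $\mathcal{M}^{\phi,X^{(1)},Z^{(1)},\mu^{(1)},[k]}$, $\mathcal{M}^{\phi,X^{(2)},Z^{(2)},\mu^{(2)},[k]}$, and $\mathcal{M}^{\phi,X^{o,(3)},Y,Q,[k]}$) together with the pathwise constraint $Y = X^{o,(3)} - \alpha^{(1)}Z^{(1),[k]} + \alpha^{(2)}Z^{(2),[k]}$, all of which are closed under weak convergence provided one has enough continuity and uniform integrability. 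First I would record the now-standard reformulation of the martingale property: $\mathcal{M}$ being a $\mathbb{P}^n$-martingale is equivalent to
\[
\mathbb{E}^{\mathbb{P}^n}\!\left[\big(\mathcal{M}_{t} - \mathcal{M}_{s}\big)\, \Phi\big((X,Q,Z)_{r_1},\dots,(X,Q,Z)_{r_\ell}\big)\right] = 0
\]
for all $0 \le r_1 \le \cdots \le r_\ell \le s \le t \le T+1$ and all bounded continuous $\Phi$, where the relevant coordinate evaluations are at continuity points; by Lemma \ref{relative-compactness-union-control-rule} (relative compactness) and Skorokhod representation I may assume the convergence $\mathbb{P}^n \to \mathbb{P}$ is realized almost surely along paths, so the task reduces to passing to the limit inside each such expectation.

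The second step is the convergence of the integrand functionals. For $i=1,2$, the term $\mathcal{M}^{\phi,X^{o,(i)},Z^{(i)},\mu^{(i)},[k]}_t$ is built from $\phi(X^{o,(i)}_t)$, the Lebesgue integral $\int_0^t \mathbb{L}^{(i)}\phi(s,X^{o,(i)}_s)\,ds$, and the Stieltjes integral $\int_0^t \nabla\phi(X^{o,(i)}_s)\cdot d(\kappa^{(i)}\overline\mu^{(i)}_s + \eta^{(i)}Z^{(i),[k]}_s)$. Continuity of the first and second in the $M_1$ topology at continuity points is classical (\cite[Chapter 12]{Whitt-2002}); for the Stieltjes term I would use that $Z^{(i),[k]}$ and $\overline{\mu^{(i)}}$ are \emph{continuous} (being $Z^{[k]}$-type smoothings and first moments of measures on $\widetilde{\mathcal{A}}^{m,c}_{0,T+1}$), so $M_1$-convergence upgrades to uniform convergence on $[0,T+1]$ and the integral against a continuous integrand passes to the limit; here $\mu^n \to \mu$ in $\mathcal{W}_p$ gives $\overline{\mu^{(i),n}} \to \overline{\mu^{(i)}}$ uniformly. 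For the $X^{(3)}$-martingale, the integrand $\int_U \mathcal{L}\phi(s,X^{o,(3)}_{s},Y_{s},u)\,Q_s(du)$ is handled exactly as in \cite{BCP-2017}: continuity of $b^{(3)}, l, \lambda$ in $u$ (assumptions $\mathcal{A}_1,\mathcal{A}_4$), boundedness of $l,\lambda$, compactness of $U$ (assumption $\mathcal{A}_6$), and the joint convergence $(X^{o,(3),n},Q^n) \to (X^{o,(3)},Q)$ give convergence of the time-integral; the pathwise identity $Y = X^{o,(3)} - \alpha^{(1)}Z^{(1),[k]} + \alpha^{(2)}Z^{(2),[k]}$ passes to the limit because $Z^{(i),[k]}$ are continuous (so addition is $M_1$-continuous here — this is precisely the point of smoothing) and $X^{o,(3),n} \to X^{o,(3)}$. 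Uniform integrability needed to pass the expectations through the limit follows from the moment bound \eqref{widetilde-C-1} together with the growth assumption $\mathcal{A}_3$ and the uniform fuel bound $m$, after noting $\phi \in \mathcal{C}^2_b$ and the coefficients have at most linear/bounded growth.

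The third step is bookkeeping around continuity points. Since the limiting paths have at most countably many discontinuities, the set of $(s,t,r_1,\dots,r_\ell)$ that are simultaneously continuity points of $\mathbb{P}$-a.e. path is dense; I establish the martingale identity for $\mathbb{P}$ on this dense set and extend to all times by right-continuity of $t \mapsto \mathcal{M}_t$ (c\`adl\`ag for the $X^{(3)}$-martingale, continuous for the other two) and dominated convergence. This yields all three martingale properties under $\mathbb{P}$ and the pathwise constraint, hence $\mathbb{P} \in \mathcal{R}^{m,[k]}(\mu)$. The main obstacle I anticipate is \emph{not} the measure-theoretic limiting argument, which is routine, but rather verifying the convergence of the Stieltjes integral $\int_0^{t} h(X^{o,(i)}_s)\cdot d(\kappa^{(i)}\overline{\mu^{(i)}}_s + \eta^{(i)}Z^{(i),[k]}_s)$-type terms (and their analogues inside $\mathcal{M}^{(i)}$) under only $M_1$-convergence of the state: one must exploit that the \emph{integrator} is continuous to convert weak $M_1$ convergence of $X^{o,(i),n}$ into something strong enough (convergence at continuity points plus uniform integrability of the integrand via \eqref{widetilde-C-1} and $\mathcal{A}_3$) to justify the interchange, and to rule out mass escaping at jump times of the limit integrand. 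This is where the continuity of $Z^{[k]}$ (as opposed to general singular $Z$) is essential, and it is exactly the estimate that will \emph{fail} to generalize in Section \ref{sec:approximation}, necessitating the separate approximation argument there.
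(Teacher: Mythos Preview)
Your proposal is correct and follows essentially the same route as the paper: Skorokhod representation to upgrade to a.s.\ convergence, reformulation of the martingale property via bounded continuous test functionals, passage to the limit using that $Z^{(i),[k]}$ and $\overline{\mu^{(i)}}$ are continuous (so Stieltjes integrals and addition behave well under $M_1$), and extension from a dense set of times by right-continuity. The only structural difference is that the paper first invokes the SDE representation (Proposition~\ref{martingale-representation}) to establish relative compactness of $Y^n$ and then passes to the joint limit $(\widetilde X,\widetilde Q,\widetilde Z,\widetilde Y)$, whereas you construct $Y$ directly as $X^{o,(3)}-\alpha^{(1)}Z^{(1),[k]}+\alpha^{(2)}Z^{(2),[k]}$ on the limit side; both are valid here precisely because the $Z^{[k]}$ are continuous.
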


\vspace{-4.6mm}
%\begin{proof}
\textit{Proof.}
To verify $\mathbb P\in\mathcal R^{m,[k]}(\mu)$, it suffices to check the items in the definition of relaxed controls. For each $n$, there exists a stochastic process $Y^n\in\widetilde{\mathcal{D}}_{0,T+1}$ such that $\mathbb{P}^n\left(X^{o,(3)}_\cdot=Y^{n}_\cdot+\alpha^{(1)}Z^{(1),[k]}_\cdot-\alpha^{(2)}Z^{(2),[k]}_\cdot\right)=1$ and the corresponding martingale problem is satisfied. By Proposition \ref{martingale-representation}, for each $n$ there exists a probability space $(\Omega^n,\mathcal{F}^n,\mathbb{Q}^n)$ that accommodates $(\check{X}^{n},\check{Q}^n,\check{Z}^{n})$, a Poisson random measure $N^{n}$ with intensity $\check Q^n_t(du)\lambda^{}_tdt$, and two Brownian motions $W^{(1),n}$ and $W^{(2),n}$ such that 
%\begin{equation}\label{admissibility-identity-law-1}
$	\mathbb{P}^n\circ(X^o,Q,Z,Y^n)^{-1}=\mathbb{Q}^n\circ (\check{X}^{n},\check{Q}^n,\check{Z}^{n},\check Y^n)^{-1},$
%\end{equation}
where

\vspace{-5.8mm}
\begin{equation*}
	\left\{\begin{split}
	&~\check Y^{n}_\cdot=\int_0^\cdot\int_U b^{(3)}(s,\check{X}^{(3),n}_s,u)\,\check{Q}^n_s(du)ds+\int_0^\cdot\int_Ul^{(1)}(s,u)\widetilde N^{n}(ds,du),\\
	&~\check{X}^{(3),n}_\cdot=\check Y^{n}_\cdot+\alpha^{(1)}\check{Z}^{(1),n,[k]}_\cdot-\alpha^{(2)}\check Z^{(2),n,[k]}_\cdot,\\
	&~\check X^{(i),n}_\cdot=\int_0^\cdot b^{(i)}(s,\check X^{(i),n}_s,\mu^{(i),n}_s)\,ds+\kappa^{(i)}\overline\mu^{(i),n}_\cdot+\eta^{(i)}\check Z^{(i),n,[k]}_\cdot+\int_0^\cdot\sigma^{(i)}(s)\,dW^{(i),n}_s,\quad i=1,2.
	\end{split}\right.
\end{equation*}
The relative compactness of $\check Y^n$ (thus the relative compactness of $Y^n$) follows from the same argument as Lemma \ref{relative-compactness-union-control-rule}. As a result, the sequence $(X^{o},Q,Z,Y^n)$ of random variables taking values in ${\Omega}^{m,o} \times \widetilde{\cal D}_{0,T+1}$ has a weak limit $(\widehat{X},\widehat{Q},\widehat{Z},\widehat{Y})$ defined on some probability space.
Skorokhod's representation theorem yields a probability space $(\widetilde{\Omega},\widetilde{\mathcal{F}},\mathbb{Q})$ that accommodates  $(\widetilde{X}^{n},\widetilde{Q}^n,\widetilde{Z}^{n},\widetilde{Y}^n)$ and $(\widetilde{X}^{},\widetilde{Q},\widetilde{Z},\widetilde{Y})$ such that
\begin{equation}\label{admissibility-identity-law-2}
    \begin{split}
    &~(\widetilde{X}^{n},\widetilde{Q}^n,\widetilde{Z}^{n},\widetilde{Y}^n)\overset{d}{=}(X^{o},Q,Z^{},Y^n),\quad\quad (\widetilde{X}^{},\widetilde{Q},\widetilde{Z}^{},\widetilde{Y})\overset{d}{=}(\widehat{X}^{},\widehat{Q},\widehat{Z}^{},\widehat{Y}),
    \end{split}
\end{equation}
 and as elements in the product space $\Omega^{m,o}\times \widetilde{\mathcal D}_{0,T+1}$
\begin{equation}\label{admissibility-as-convergence}
	(\widetilde{X}^{n},\widetilde{Q}^n,\widetilde{Z}^{n},\widetilde{Y}^n)\rightarrow(\widetilde{X}^{},\widetilde{Q},\widetilde{Z}^{},\widetilde{Y}) \quad \mathbb{Q}\mbox{-a.s.}.
\end{equation}
In particular,
$
	\mathbb{Q} \left(\widetilde{X}^{(3)}=\widetilde{Y}^{} +\alpha^{(1)}\widetilde{Z}^{(1),[k]}-\alpha^{(2)}\widetilde Z^{(2),[k]}\right)=1.
$
%and
%\[
%    \mathbb Q\left(\widetilde X^{(i)}=x^{(i)}+\int_0b^{(i)}(s,\widetilde X^{(i),n}_s)\,ds+\kappa^{(i)}\left(\mu^{(i)}+\widetilde Z^{(i),[k]}\right)\right)=1,\quad i=1,2.
%\]
By $\mathbb{P}^n\rightarrow \mathbb{P}$ and the uniqueness of the limit, we have $\mathbb{P}\circ(X^{o},Q,Z^{})^{-1}=\mathbb{Q}\circ(\widetilde{X}^{},\widetilde{Q},\widetilde{Z})^{-1}$. It yields a stochastic process $Y\in\widetilde{\mathcal{D}}_{0,T+1}$ such that
    $\mathbb{P} \left( X^{o,(3)}=Y+\alpha^{(1)}Z^{(1),[k]}-\alpha^{(2)}Z^{(2),[k]}\right)=1,$
 and 
 \begin{equation}\label{admissibility-identity-law-3}
 \mathbb{P}\circ(X^o,Q,Z,Y)^{-1}=\mathbb{Q}\circ(\widetilde{X},\widetilde{Q},\widetilde{Z},\widetilde{Y})^{-1}.
 \end{equation}
It remains to verify $\mathcal M^{\phi,X^{o,(1)},Z^{(1)},\mu^{(1)},[k]}$,  $\mathcal M^{\phi,X^{o,(2)},Z^{(2)},\mu^{(2)},[k]}$ and ${\mathcal{M}}^{\phi,X^{o,(3)},Y,Q}$ are martingales under $\mathbb P$, where $\mathcal M^{\phi,X^{o,(1)},Z^{(1)},\mu^{(1)},[k]}$,  $\mathcal M^{\phi,X^{o,(2)},Z^{(2)},\mu^{(2)},[k]}$ and ${\mathcal{M}}^{\phi,X^{o,(3)},Y,Q}$ are defined in \eqref{martingale-def-smooth-12} and \eqref{martingale-def-smooth}, respectively.
%Finally, we verify the stability of the martingale problem. To do so, we recall the definition of the martingale defined in \eqref{martingale-def-smooth-12} and \eqref{martingale-def-smooth}:
%\[
%	\mathcal M^{\phi,X^{i},Z^{(i)},[k]}_t=\phi(X^{(i)}_t)-\int_0^t\mathbb L^{(i)}\phi(s,X^{(i)}_s)\,ds-\int_0^t\kappa^{(i)}\phi'(X^{(i)}_s)\,d(\overline\mu^{(i)}_s+Z^{(i),[k]}_s)
%\]
%and
%    \[
%       {\mathcal{M}}^{\phi,X^{(3)},Y,Q}_t=\phi(Y_t)-\int_0^t\int_U{\mathcal{L}}(s,X^{(3)}_s,Y^n_s,u)\,Q_s(du)ds,
%    \]
%with $\mathbb L^{(i)}\phi(s,x)=b^{(i)}(s,x)\phi'(x)+\frac{1}{2}(\sigma^{(i)})^2\phi''(x)$ and $\mathcal L\phi(s,x,y,u)=\phi'(y)b^{(3)}(s,x,u)-\big(\phi(y+l(s,u))-\phi(y)-\phi'(y)l(s,u)\big)\lambda_s$.
First, we verify the martingale property related to \eqref{martingale-def-smooth-12}. Note that $\mu^n\rightarrow\mu$ in $(\mathcal P_p(\widetilde{\mathcal A}^{m,c}_{0,T+1}))^2\times( \mathcal P_p(\widetilde{\mathcal D}_{0,T+1}))^3$ implies
 \begin{equation}\label{convergence-1st-moment-smooth}
	\overline\mu^{(i),n}\rightarrow\overline\mu^{(i)}\quad\textrm{    in uniform topology},\quad i=1,2.
\end{equation}
 Thus, we have for any $s<t$ and any $\mathcal F^o_s$-measurable continuous and bounded function $F$ defined on the canonical space $\Omega^{m,o}$
\begin{align*}
%	\begin{split}
		0=&~\mathbb E^{\mathbb P^n}\left(\mathcal M^{\phi,X^{o,(1)},Z^{(1)},\mu^{(1),n},[k]}_t-\mathcal M^{\phi,X^{o,(1)},Z^{(1)},\mu^{(1),n},[k]}_s\right)F\quad(\textrm{since }\mathbb P^n\in\mathcal R^{[k]}(\mu^n))\\
		\overset{\textrm{by }\eqref{admissibility-identity-law-2}}{=}&~\mathbb E^{\mathbb Q}\left(\mathcal M^{\phi,\widetilde X^{(1),n},\widetilde Z^{(1),n},\mu^{(1),n},[k]}_t-\mathcal M^{\phi,\widetilde X^{(1),n},\widetilde Z^{(1),n},\mu^{(1),n},[k]}_s\right)F(\widetilde X^n,\widetilde Q^n,\widetilde Z^n)\\
		\overset{\textrm{by }\eqref{admissibility-as-convergence}\textrm{ and }\eqref{convergence-1st-moment-smooth}}{\rightarrow}&~\mathbb E^{\mathbb Q}\left(\mathcal M^{\phi,\widetilde X^{(1)},\widetilde Z^{(1)},\mu^{(1)},[k]}_t-\mathcal M^{\phi,\widetilde X^{(1)},\widetilde Z^{(1)},\mu^{(1)},[k]}_s\right)F(\widetilde X,\widetilde Q,\widetilde Z)\\
		\overset{\textrm{by }\eqref{admissibility-identity-law-3}}{=}&~\mathbb E^{\mathbb P}\left(\mathcal M^{\phi,X^{o,(1)},Z^{(1)},\mu^{(1)},[k]}_t-\mathcal M^{\phi,X^{o,(1)},Z^{(1)},\mu^{(1)},[k]}_s\right)F.
%	\end{split}
\end{align*}
The same result holds for $\mathcal M^{\phi,X^{o,(2)},Z^{(2)},\mu^{(2)},[k]}$.

Next we check the martingale property of ${\mathcal{M}}^{\phi,X^{o,(3)},Y,Q}$.
Since $\widetilde Y^n\rightarrow\widetilde Y$ in $M_1$ topology $\mathbb Q$ a.s., there exists $\widetilde\Omega'\subseteq\widetilde\Omega$ with full measure such that for each $\widetilde\omega\in\widetilde\Omega'$, $\widetilde Y^n_t(\widetilde\omega)\rightarrow\widetilde Y_t(\widetilde\omega)$ for almost every $t\in[0,T+1]$, which together with Step 2 in the proof of \cite[Lemma 3.3]{FH-2017} implies that for each $\widetilde\omega\in\widetilde\Omega$ and for each continuous and bounded $F$, 
$
    \lim_{n\rightarrow\infty}\int_0^{T+1}\left|{\mathcal M}^{\phi,\widetilde X^{(3),n},\widetilde Y^n,\widetilde Q^n}_tF(\widetilde X^n,\widetilde Q^n,\widetilde Z^n)-{\mathcal M}^{\phi,\widetilde X^{(3)},\widetilde Y,\widetilde Q}_tF(\widetilde X,\widetilde Q,\widetilde Z)\right|(\widetilde\omega)\,dt=0.
$
%By the boundedness and linear growth of the coefficients in ${\mathcal M}^{\phi,\widetilde X^{(3),n},\widetilde Y^n,\widetilde Q^n}$ and ${\mathcal M}^{\phi,\widetilde X^{(3)},\widetilde Y,\widetilde Q}$, and the boundedness of $F$, we have
By the dominated convergence, it holds that
\[
    \lim_{n\rightarrow\infty}\mathbb E^{\mathbb Q}\Big[\int_0^{T+1}\Big|{\mathcal M}^{\phi,\widetilde X^{(3),n},\widetilde Y^n,\widetilde Q^n}_tF(\widetilde X^n,\widetilde Q^n,\widetilde Z^n)-{\mathcal M}^{\phi,\widetilde X^{(3)},\widetilde Y,\widetilde Q}_tF(\widetilde X,\widetilde Q,\widetilde Z)\Big|\,dt\Big]=0.
\]
Thus, up to a subsequence, we have for almost every $t\in[0,T+1]$ that
\begin{equation}\label{step-M3-martingale}
    \lim_{n\rightarrow\infty}\mathbb E^{\mathbb Q}\left[{\mathcal M}^{\phi,\widetilde X^{(3),n},\widetilde Y^n,\widetilde Q^n}_tF(\widetilde X^n,\widetilde Q^n,\widetilde Z^n)-{\mathcal M}^{\phi,\widetilde X^{(3)},\widetilde Y,\widetilde Q}_tF(\widetilde X,\widetilde Q,\widetilde Z)\right]=0,
\end{equation}
which implies that for almost every $s,t\in[0,T+1)$ and $s<t$, and for each $F$ that is continuous, bounded and $\mathcal{F}^o_s$-measurable
\begin{equation}\label{martingale-convergence}
        \begin{split}
            0=&~\mathbb E^{\mathbb{P}^n}\left[\left({\mathcal{M}}^{\phi,X^{o,(3)},Y^n,Q}_t-{\mathcal{M}}^{\phi,X^{o,(3)},Y^n,Q}_s\right)F(X^o,Q,Z)\right]\quad(\textrm{since }\mathbb P^n\in\mathcal R^{[k]}(\mu^n))\\
            \overset{\textrm{by }\eqref{admissibility-identity-law-2}}{=}&~\mathbb E^{\mathbb{Q}}\left[\left({\mathcal{M}}^{\phi,\widetilde X^{(3),n},\widetilde Y^n,\widetilde Q^n}_t-{\mathcal{M}}^{\phi,\widetilde X^{(3),n},\widetilde Y^n,\widetilde Q^n}_s\right)F(\widetilde{X}^n,\widetilde{Q}^n,\widetilde{Z}^n)\right]\\
            \overset{\textrm{by }\eqref{step-M3-martingale}}{\rightarrow}&~ \mathbb E^{\mathbb{Q}}\left[\left({\mathcal{M}}^{\phi,\widetilde X^{(3)},\widetilde Y,\widetilde Q}_t-{\mathcal{M}}^{\phi,\widetilde X^{(3)},\widetilde Y,\widetilde Q}_s\right)F(\widetilde{X},\widetilde{Q},\widetilde{Z})\right]\\
            \overset{\textrm{by }\eqref{admissibility-identity-law-3}}{=}&~\mathbb E^{\mathbb{P}}\left[\left({\mathcal{M}}^{\phi,X^{o,(3)},Y,Q}_t-{\mathcal{M}}^{\phi,X^{o,(3)},Y,Q}_s\right)F(X^o,Q,Z)\right].
        \end{split}
    \end{equation}
The convergence in \eqref{martingale-convergence} is still true for $t=T+1$. Indeed, the same argument in the proof of \cite[Lemma 3.3]{FH-2017} implies
$
	\int_0^{T+1}\int_U\mathcal L\phi(s,\widetilde X^{(3),n}_s,\widetilde Y^n_s,u)\widetilde Q^n_s(du)ds\rightarrow 	\int_0^{T+1}\int_U\mathcal L\phi(s,\widetilde X^{(3)}_s,\widetilde Y_s,u)\widetilde Q_s(du)ds$ $\mathbb Q~\textrm{a.s.}.
$
Note that $\widetilde Y^n\rightarrow\widetilde Y$ in $(\widetilde{\mathcal D}_{0,T+1},M_1)$ $\mathbb Q$ a.s. implies that
$
    \widetilde Y^n_{T+1}\rightarrow\widetilde Y_{T+1},~\mathbb Q
$ a.s..
Thus, by dominated convergence it holds that
$
    \mathbb E^{\mathbb Q}\left[{\mathcal M}^{\phi,\widetilde X^{(3),n},\widetilde Y^n,\widetilde Q^n}_{T+1}F(\widetilde X^n,\widetilde Q^n,\widetilde Z^n)\right]\rightarrow\mathbb E^{\mathbb Q}\left[{\mathcal M}^{\phi,\widetilde X^{(3)},\widetilde Y,\widetilde Q}_{T+1}F(\widetilde X,\widetilde Q,\widetilde Z)\right].
$
By the right continuity of the trajectory of ${\mathcal M}^{\phi,X^{o,(3)},Y,Q}$, we have for any $0\leq s<t\leq T+1$
\[
~~~\qquad\qquad \qquad\qquad\qquad  \mathbb E^{\mathbb{P}}\left[\left({\mathcal{M}}^{\phi,X^{o,(3)},Y,Q}_t-{\mathcal{M}}^{\phi,X^{o,(3)},Y,Q}_s\right)F(X^o,Q,Z)\right]=0.   \qquad\qquad\qquad\qquad \qquad  \square 
\]
%\end{proof}
%The next corollary shows that the correspondence $\mathcal R^{[k]}$ is continuous in the sense of \cite[Definition 17.2, Theorem 17.20, Theorem 17.21]{AB-1999}.
\begin{corollary}\label{continuity-R}
Suppose that $\mathcal{A}_1$, $\mathcal{A}_4$ and $\mathcal{A}_6$ hold. Then,  $\mathcal{R}^{m,[k]}:(\mathcal{P}_p(\widetilde{\mathcal{A}}^{m,c}_{0,T+1}))^2\times ( \mathcal{P}_p(\widetilde{\mathcal{D}}_{0,T+1}))^3\rightarrow 2^{\mathcal{P}_p(\Omega^{m,o})}$ is continuous in the sense of \cite[Definition 17.2, Theorem 17.20, Theorem 17.21]{AB-1999} and compact-valued.
\end{corollary}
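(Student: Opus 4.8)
The plan is to read off the corollary from the three preceding results, supplemented by one short stability argument. First I would establish compact-valuedness: fix $\mu$; Lemma \ref{relative-compactness-union-control-rule} already gives that $\mathcal{R}^{m,[k]}(\mu)$ is relatively compact in $(\mathcal{P}_p(\Omega^o),\mathcal{W}_p)$, and closedness follows by applying Proposition \ref{admissibility-limit-P-proposition} to the constant sequence $\mu^n\equiv\mu$, since any $\mathcal{W}_p$-limit of relaxed controls for $\mu$ is again a relaxed control for $\mu$. A closed subset of a relatively compact set is compact, so each $\mathcal{R}^{m,[k]}(\mu)$ is compact.

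For upper hemicontinuity I would use the closed-graph route. Proposition \ref{admissibility-limit-P-proposition} is precisely the statement that $\textrm{Gr}\,\mathcal{R}^{m,[k]}$ is sequentially closed, while Lemma \ref{relative-compactness-union-control-rule} places the entire range $\bigcup_\mu\mathcal{R}^{m,[k]}(\mu)$ inside a single $\mathcal{W}_p$-compact set. Since all the spaces in play are metrizable (indeed Polish), a correspondence with closed graph whose range lies in a compact set is upper hemicontinuous and compact-valued; this is the content of \cite[Theorem 17.20]{AB-1999}. Hence $\mathcal{R}^{m,[k]}$ is upper hemicontinuous.

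The only genuinely new ingredient is lower hemicontinuity. Given $\mu^n\to\mu$ in the product Wasserstein topology and an arbitrary $\mathbb{P}\in\mathcal{R}^{m,[k]}(\mu)$, I must produce $\mathbb{P}^n\in\mathcal{R}^{m,[k]}(\mu^n)$ with $\mathbb{P}^n\to\mathbb{P}$. The idea is to freeze the noise: by Proposition \ref{martingale-representation} fix a weak-solution realization $(\bar\Omega,\bar{\mathcal F},\bar{\mathcal F}_t,\bar{\mathbb P},\bar X,\bar Z,\bar Q,\bar W,\bar N)$ of $\mathbb{P}$; keep $(\bar Z,\bar Q,\bar W,\bar N)$ and the component $\bar X^{(3)}$ unchanged, which is legitimate because the dynamics \eqref{resume-SDE-3} of $X^{(3)}$ in the smoothed model do not involve $\mu$ at all; and replace $\bar X^{(1)},\bar X^{(2)}$ by the unique strong solutions of the corresponding equations in \eqref{MFG-smooth} driven by the same $\bar Z^{(i),[k]}$ and $\bar W^{(i)}$ but with the deterministic continuous shift $\kappa^{(i)}\overline\mu^{(i)}$ replaced by $\kappa^{(i)}\overline\mu^{(i),n}$ (these exist and are unique since $b^{(i)}$ is Lipschitz by $\mathcal{A}_1$, and they remain $\bar{\mathcal F}_t$-adapted). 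By Proposition \ref{martingale-representation} again, the law $\mathbb{P}^n$ of the new tuple lies in $\mathcal{R}^{m,[k]}(\mu^n)$. Since $\mu^n\to\mu$ forces $\overline\mu^{(i),n}\to\overline\mu^{(i)}$ uniformly on $[0,T+1]$ (as in \eqref{convergence-1st-moment-smooth}), a Gr\"onwall estimate built on $\mathcal{A}_1$ together with the moment bound \eqref{widetilde-C-1} yields $\mathbb E^{\bar{\mathbb P}}[\sup_{t\leq T+1}\|\bar X^{(i),n}_t-\bar X^{(i)}_t\|^p]\to0$; since the uniform metric dominates the $M_1$ metric and only the first two coordinates change, $\mathcal{W}_{p,\Omega^o}(\mathbb{P}^n,\mathbb{P})\to0$. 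Lower hemicontinuity then follows from \cite[Theorem 17.21]{AB-1999}, and combined with the above $\mathcal{R}^{m,[k]}$ is continuous.

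I expect the main (indeed only) point requiring care to be this last step: the construction itself is routine once one notices that $\mu$ enters the smoothed problem only through the additive deterministic shifts in the $X^{(1)}$- and $X^{(2)}$-equations, but one must verify that almost-sure (uniform) convergence of the re-solved states transfers to convergence of their laws in $\mathcal{W}_p$ on $\Omega^o$, which is exactly where the uniform moment estimate \eqref{widetilde-C-1} is invoked to prevent mass escaping to infinity.
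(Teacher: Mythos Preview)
Your proof is correct and follows the same route as the paper: upper hemicontinuity and compact-valuedness from Lemma \ref{relative-compactness-union-control-rule}, Proposition \ref{admissibility-limit-P-proposition} and \cite[Theorem 17.20]{AB-1999}, and lower hemicontinuity via the ``freeze the noise and re-solve the state equations'' construction. The paper simply delegates the latter step to \cite[Lemma 4.4]{L-2015}, whose argument is precisely the one you spell out (keep $(\bar Z,\bar Q,\bar W,\bar N)$ fixed, replace the deterministic shift $\kappa^{(i)}\overline\mu^{(i)}$ by $\kappa^{(i)}\overline\mu^{(i),n}$, and conclude by Gr\"onwall plus the uniform moment bound).
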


\vspace{-8.6mm}
\begin{proof}
Lemma \ref{relative-compactness-union-control-rule}, Proposition \ref{admissibility-limit-P-proposition} and \cite[Theorem 17.20]{AB-1999} imply that $\mathcal R^{m,[k]}$ is upper hemi-continuous and compact-valued. The lower hemi-continuity of $\mathcal{R}^{m,[k]}$ can be verified in the same manner as \cite[Lemma 4.4]{L-2015}. %To do so, let $\mu^{n}\rightarrow\mu$ in $\mathcal W_{p,\widetilde{\mathcal A}^{m,c}_{0,T+1}}\times \mathcal W_{p,\widetilde{\mathcal A}^{m,c}_{0,T+1}}$ and  choose any $\mathbb P\in\mathcal R^{[k]}(\mu)$, which implies the existence of $(\overline\Omega,\overline{\mathcal F},\overline {\mathbb Q})$, $(\overline X,\overline Q,\overline Z,\overline W,\overline N)$ such that
% \[
%    \mathbb P=\overline{\mathbb Q}\circ\left(\overline X,\overline Q,\overline Z\right)^{-1},
% \]
% \[
%        \overline{X}^{(3)}_t=x^{(3)}+\int_0^t\int_U b^{(3)}(s,\overline{X}^{(3)}_s,u)\,\overline{Q}_s(du)ds+\int_0^t\int_Ul(s,u)\widetilde{\overline N}(ds,du)+\alpha^{(1)}\overline{Z}^{(1),[k]}_t-\alpha^{(2)}\overline Z^{(2),[k]}_t,
% \]
%and
%    \[
%        \overline X^{(i)}_t=x^{(i)}+\int_0^tb^{(i)}(s,X^{(i)}_s)\,ds+\left(\kappa^{(i)}\overline\mu^{(i)}_t+\eta^{(i)}\overline Z^{(i),[k]}_t\right)+\sigma^{(i)}(t)W^{(i)}_t,\quad i=1,2.
%    \]
%For $i=1,2$, define $X^{(i),n}$ as the unique solution to the following equation
%    \[
%        X^{(i),n}_t=x^{(i)}+\int_0^tb^{(i)}(s,X^{(i),n}_s)\,ds+\left(\kappa^{(i)}\overline\mu^{(i),n}_t+\eta^{(i)}\overline Z^{(i),[k]}_t\right)+\sigma^{(i)}(t)W^{(i)}_t.
%    \]
%
%Define
%\[
%    \mathbb P^n:=\overline{\mathbb Q}\circ\left(X^{(1),n},X^{(2),n},\overline X^{(3)},\overline Q,\overline Z^{(1)},\overline Z^{(2)}\right)^{-1}.
%\]
%It is easy to check $\mathbb P^n\in\mathcal R^{[k]}(\mu^{n})$ and $\mathbb P^n\rightarrow \mathbb P$ in $\mathcal W_{p,\Omega^o}$.
\end{proof}

%%%%%%%%%%%%%%%%%%%%%%%%%%%%%%%%%%%%%%%%%%%%
%%%%%%%%%%%%%%%%%%%%%%%%%%%%%%%%%%%%%%%%%%%%

\begin{corollary}\label{existence-singular-control}
Under assumptions $\mathcal{A}_1$-$\mathcal{A}_6$, $\mathcal{R}^{m,[k],*}(\mu)\neq{\O}$ for each $\mu\in(\mathcal{P}_p(\widetilde{\mathcal{A}}^{m,c}_{0,T+1}))^2\times (\mathcal{P}_p(\widetilde{\mathcal{D}}_{0,T+1}))^3$ and $\mathcal{R}^{m,[k],*}$ is upper hemi-continuous.
\end{corollary}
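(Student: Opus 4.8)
The plan is to obtain both assertions as a direct application of Berge's maximum theorem, using the structural results already established in this section. First I would record the three ingredients that Berge's theorem requires. By Corollary~\ref{continuity-R}, the correspondence $\mathcal{R}^{m,[k]}$ is continuous (hence in particular upper hemi-continuous) and compact-valued on the metrizable parameter space $(\mathcal{P}_p(\widetilde{\mathcal{A}}^{m,c}_{0,T+1}))^2\times(\mathcal{P}_p(\widetilde{\mathcal{D}}_{0,T+1}))^3$, which is a finite product of Wasserstein spaces over Polish spaces. By Lemma~\ref{continuity-J-mu-P}, the map $J^o$ is continuous on $\mathrm{Gr}\,\mathcal{R}^{m,[k]}$. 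The only remaining point is that $\mathcal{R}^{m,[k]}(\mu)\neq\emptyset$ for every admissible $\mu$. Granting this, Berge's maximum theorem (e.g.\ \cite[Theorem~17.31]{AB-1999}, applied to $-J^o$) yields immediately that $\mu\mapsto\mathcal{R}^{m,[k],*}(\mu)=\mathrm{argmin}_{\mathbb{P}\in\mathcal{R}^{m,[k]}(\mu)}J^o(\mathbb{P};\mu)$ is nonempty-valued, compact-valued and upper hemi-continuous (and, as a byproduct, the value function $\mu\mapsto\min_{\mathbb{P}\in\mathcal{R}^{m,[k]}(\mu)}J^o(\mathbb{P};\mu)$ is continuous).

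For the nonemptiness of $\mathcal{R}^{m,[k]}(\mu)$, fix $\mu$ and exhibit one relaxed control. Take the constant relaxed regular control $Q_t(du)=\delta_{u_0}(du)$ for an arbitrary fixed $u_0\in U$ and the trivial singular controls $Z^{(1)}=Z^{(2)}\equiv 0$, which belong to $\widetilde{\mathcal{A}}^{m,c}_{0,T+1}$ since $m>0$ (so that also $Z^{(i),[k]}\equiv 0$). Under $\mathcal{A}_1$ and $\mathcal{A}_4$, the equations for $X^{(1)},X^{(2)}$ have Lipschitz, linearly growing coefficients and a continuous finite-variation forcing $\kappa^{(i)}\overline{\mu}^{(i)}$, hence admit (strong, in particular weak) solutions; and the equation for $X^{(3)}=Y$ is driven by a Poisson random measure whose intensity $\lambda$ is bounded and whose jump coefficient $l$ is bounded and measurable, so it too has a weak solution on a suitable stochastic basis carrying $W^{(1)},W^{(2)}$ and $N$. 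The law on $\Omega^{m,o}$ of the resulting tuple $(X^o,Q,Z)$ satisfies the martingale problem of Definition~\ref{control-rule-mu} with Item~$2.$ replaced by Item~$2'.$ — this is exactly the ``if'' direction of Proposition~\ref{martingale-representation} in the present (continuous-singular-control) setting — so it lies in $\mathcal{R}^{m,[k]}(\mu)$. Moreover, the moment/growth bounds in $\mathcal{A}_3$ together with the uniform estimate \eqref{widetilde-C-1}-type control on the states guarantee $J^o(\mathbb{P};\mu)<\infty$, so the minimization defining $\mathcal{R}^{m,[k],*}(\mu)$ is well posed.

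I do not expect a genuine obstacle in this step; the corollary is essentially a bookkeeping consequence of Lemma~\ref{continuity-J-mu-P} and Corollary~\ref{continuity-R}. The only points needing (minor) care are: verifying that the hypotheses of Berge's theorem are literally in force — in particular that $\mathcal{R}^{m,[k]}$ is not merely upper hemi-continuous but continuous and compact-valued, which is precisely Corollary~\ref{continuity-R}, and that the parameter space is metrizable — and the nonemptiness argument above, which reduces to standard weak existence for an SDE system with a Poisson random measure and bounded coefficients.
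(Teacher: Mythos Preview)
Your proposal is correct and follows essentially the same approach as the paper: nonemptiness of $\mathcal{R}^{m,[k]}(\mu)$ from $\mathcal{A}_1$ and $\mathcal{A}_4$, continuity and compact-valuedness of $\mathcal{R}^{m,[k]}$ from Corollary~\ref{continuity-R}, joint continuity of $J^o$ from Lemma~\ref{continuity-J-mu-P}, and then Berge's maximum theorem \cite[Theorem~17.31]{AB-1999}. Your version is simply more explicit about the nonemptiness step (exhibiting the trivial control $Z\equiv 0$, $Q=\delta_{u_0}$) and about finiteness of $J^o$, both of which the paper leaves implicit.
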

%{\colorbox{lightgray}{======================more explanation in the proof==================}}

\vspace{-8.6mm}
\begin{proof}
%Note that by assumptions $\mathcal A_1$ and $\mathcal A_4$ for each $\mu\in(\mathcal{P}_p(\widetilde{\mathcal{A}}^{m,c}_{0,T+1}))^2\times( \mathcal{P}_p(\widetilde{\mathcal{D}}_{0,T+1}))^3$ the set $\mathcal{R}^{m,[k]}(\mu)$ is nonempty. Corollary \ref{continuity-R} implies that $\mathcal{R}^{m,[k]}$ is compact-valued and continuous. By Lemma \ref{continuity-J-mu-P}, $J:Gr\mathcal{R}^{[k]}\rightarrow\mathbb{R}$ is jointly continuous. Thus, Berge's maximum theorem (\cite[Theorem 17.31]{AB-1999}) yields that $\mathcal{R}^{m,[k],*}$ is nonempty valued and upper hemi-continuous.
By Lemma \ref{continuity-J-mu-P} and Corollary \ref{continuity-R}, the conditions in Berge's maximum theorem (see \cite[Theorem 17.31]{AB-1999}) are satisfied. Thus, the desired results follow.
\end{proof}

\begin{theorem}\label{existence-MFG-finite-fuel}
Under assumptions $\mathcal{A}_1$-$\mathcal{A}_6$ and the finite-fuel constraint $Z \in \widetilde{\mathcal{A}}^m_{0,T}\times  \widetilde{\mathcal{A}}^m_{0,T}$, there exists a relaxed solution to (\ref{MFG-smooth}).
\end{theorem}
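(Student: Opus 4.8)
The plan is to establish existence of a fixed point via the Kakutani--Fan--Glicksberg theorem, applied to the best-response correspondence obtained from the optimization in \eqref{MFG-smooth}. First I would assemble the ingredients already proved in this section: by Corollary \ref{existence-singular-control} the optimal relaxed-control correspondence $\mathcal{R}^{m,[k],*}$ is nonempty-valued and upper hemi-continuous on $(\mathcal{P}_p(\widetilde{\mathcal{A}}^{m,c}_{0,T+1}))^2\times(\mathcal{P}_p(\widetilde{\mathcal{D}}_{0,T+1}))^3$, and by Lemma \ref{relative-compactness-union-control-rule} its values lie in a fixed relatively compact subset of $\mathcal{P}_p(\Omega^o)$. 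Next I would define the map $\Phi$ sending $\mathbb{P}\mapsto \mathbb{P}\circ(Z^{(1),[k]})^{-1}\otimes\cdots$, i.e. the tuple of laws $(\mathbb{P}\circ(Z^{(1),[k]})^{-1},\mathbb{P}\circ(Z^{(2),[k]})^{-1},\mathbb{P}\circ(X^{o,(1)})^{-1},\mathbb{P}\circ(X^{o,(2)})^{-1},\mathbb{P}\circ(X^{o,(3)})^{-1})$, and observe this map is continuous for $\mathcal{W}_p$ (pushforward under the continuous coordinate projections, with $p$-th moment bounds from \eqref{widetilde-C-1} giving uniform integrability so that convergence in distribution upgrades to $\mathcal{W}_p$-convergence). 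The composition $\Psi:=\Phi\circ\mathcal{R}^{m,[k],*}$ is then an upper hemi-continuous correspondence with nonempty compact convex values, provided we pick the right domain.

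The key point is to identify a convex compact set $\mathcal{K}$ that $\Psi$ maps into itself. Here I would take $\mathcal{K}$ to be the (closed convex hull of the) image of the relatively compact set from Lemma \ref{relative-compactness-union-control-rule} under $\Phi$; since that set is relatively compact in $\mathcal{W}_p$ and $\mathcal{P}_p$ of a Polish space is a Polish space, its closed convex hull $\mathcal{K}$ is compact and convex (closed convex hull of a compact set in a Polish, hence locally convex, space). Crucially $\mathcal{K}$ is a subset of $(\mathcal{P}_p(\widetilde{\mathcal{A}}^{m,c}_{0,T+1}))^2\times(\mathcal{P}_p(\widetilde{\mathcal{D}}_{0,T+1}))^3$: the marginals $\mathbb{P}\circ(Z^{(i),[k]})^{-1}$ are supported on $\widetilde{\mathcal{A}}^{m,c}_{0,T+1}$ because $Z^{[k]}_t=k\int_{t-1/k}^t Z_s\,ds$ is continuous, nondecreasing, and bounded by $m$ (as $Z\in\widetilde{\mathcal{A}}^m_{0,T}$); and $\mathbb{P}\circ(X^{o,(i)})^{-1}$ is supported on $\widetilde{\mathcal{D}}_{0,T+1}$ by construction of $\Omega^{m,o}$. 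Hence $\Psi$ restricts to a correspondence from $\mathcal{K}$ into $2^{\mathcal{K}}$.

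The remaining verification is that the values of $\Psi$ are convex. This requires that $\mathcal{R}^{m,[k]}(\mu)$ is convex for each $\mu$, which holds because the martingale conditions in Item $2'$ and the measurability/adaptedness requirements are all preserved under convex combinations of probability measures (a mixture $\lambda\mathbb{P}_1+(1-\lambda)\mathbb{P}_2$ still satisfies each martingale identity tested against bounded $\mathcal{F}^o_s$-measurable functions), the cost $J^o(\cdot;\mu)$ is affine in $\mathbb{P}$ so the argmin set $\mathcal{R}^{m,[k],*}(\mu)$ is convex, and $\Phi$ is affine (pushforward is linear), so $\Phi(\mathcal{R}^{m,[k],*}(\mu))$ is convex. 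With $\mathcal{K}$ compact convex, $\Psi:\mathcal{K}\to 2^{\mathcal{K}}$ upper hemi-continuous with nonempty compact convex values, the Kakutani--Fan--Glicksberg fixed point theorem (\cite[Corollary 17.55]{AB-1999}) yields $\mu^*\in\Psi(\mu^*)$, i.e. there is $\mathbb{P}^*\in\mathcal{R}^{m,[k],*}(\mu^*)$ with $\Phi(\mathbb{P}^*)=\mu^*$; this $\mathbb{P}^*$ is precisely a relaxed solution to \eqref{MFG-smooth}. I expect the main technical obstacle to be the self-map property --- specifically, confirming that taking closed convex hull does not leave the product-of-$\mathcal{P}_p$ spaces and that compactness survives the convex-hull operation in the $\mathcal{W}_p$ topology (one needs either that the underlying sets sit in a fixed $\sigma$-compact set, which they do here since $\widetilde{\mathcal{A}}^{m,c}_{0,T+1}$-marginals are tight by the compactness of $\widetilde{\mathcal{A}}^m_{0,T}$ under $M_1$ and the $X$-marginals are tight by Lemma \ref{relative-compactness-union-control-rule}); uniform $p$-th moment control from \eqref{widetilde-C-1} is what makes the $\mathcal{W}_p$-topology (rather than mere weak convergence) behave well throughout.
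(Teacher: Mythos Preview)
Your approach is essentially the same as the paper's: both apply Kakutani--Fan--Glicksberg (\cite[Corollary 17.55]{AB-1999}) to the best-response correspondence $\mu\mapsto\{\Phi(\mathbb P):\mathbb P\in\mathcal R^{m,[k],*}(\mu)\}$, after invoking Corollary \ref{existence-singular-control} for nonemptiness and upper hemi-continuity of $\mathcal R^{m,[k],*}$.

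There are two differences worth noting. First, on the continuity of $\Phi$: you call the map $\mathbb P\mapsto\mathbb P\circ(Z^{(i),[k]})^{-1}$ a ``pushforward under continuous coordinate projections,'' but $Z^{(i),[k]}$ is \emph{not} a coordinate of $\Omega^{m,o}$---it is the functional $Z\mapsto k\int_{\cdot-1/k}^{\cdot}Z_s\,ds$ applied to the coordinate $Z^{(i)}\in\widetilde{\mathcal A}^m_{0,T}$. One must check this functional is $M_1$-continuous into $\widetilde{\mathcal A}^{m,c}_{0,T+1}$. The paper does this explicitly via Skorokhod representation and \cite[Theorem 12.5.2]{Whitt-2002}, showing that $R^{n_j}\to R$ in $M_1$ implies $R^{n_j,[k]}\to R^{[k]}$ uniformly; your proof would need the same step, so the phrasing is slightly misleading though the conclusion is correct.

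Second, on the compact convex domain: you propose the closed convex hull of $\Phi(\bigcup_\mu\mathcal R^{m,[k]}(\mu))$ and flag compactness of this hull as the main obstacle. The paper instead defines the domain explicitly as $(\mathcal P_p(\widetilde{\mathcal A}^{m,c}_{0,T+1}))^2\times\bar S_1\times\bar S_2\times\bar S_3$, where each $S_i$ is cut out by the moment bound \eqref{widetilde-C-1} together with the oscillation estimate \eqref{k-delta} (for $i=1,2$) or the Aldous-type bound \eqref{widetilde-C-2} (for $i=3$). These defining conditions are affine in $\mathbb P$, so the $S_i$ are automatically convex and their closures are compact convex without any convex-hull argument. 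This is cleaner than your route: your closed-convex-hull step would require either embedding into a complete locally convex space (the paper does embed into signed measures at the very end, but only to have an ambient lctvs for the fixed-point theorem) or arguing directly that tightness plus uniform $p$-th moments is preserved under convex combinations---true, but your parenthetical ``Polish, hence locally convex'' is not a valid inference, as Polishness alone gives no linear structure.
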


\vspace{-7.6mm}
\begin{proof}
%Firstly, we show $\mu^n\in\mathcal P_p(\widetilde {\mathcal {\mathcal A}}(\mathbb R))$ and $\mu^n\rightarrow\mu$ in $W_p$, we have $\int_{\widetilde {\mathcal A}(\mathbb R)}x\mu^n_{\cdot}(dx)\rightarrow\int_{\widetilde {\mathcal {\mathcal A}}(\mathbb R)}x\mu_{\cdot}(dx)$ in $M_1$ topology, up to a subsequence. Indeed, Skorokhod representation theorem implies the existence of $(\overline\Omega,\overline{\mathcal F},\overline {\mathbb Q})$, $\overline Z^n$ and $\overline Z$ such that $\mu^n=\overline {\mathbb Q}\circ (\overline Z^n)^{-1}$ and $\mu=\overline {\mathbb Q}\circ (\overline Z)^{-1}$. Moreover, $\overline Z^n\rightarrow \overline Z$ in $M_1$ topology $\overline{\mathbb Q}$ a.s.. Thus, we have
%\[
%    \mathbb E^{\overline{\mathbb Q}}\left[\int_0^T|\overline Z^n_t-\overline Z_t|\,dt\right]\rightarrow 0,
%\]
%which implies that up to a subsequence
%\[
%    \left|\mathbb E^{\overline{\mathbb Q}}\overline Z^n_t-\mathbb E^{\overline{\mathbb Q}}\overline Z_t\right|\rightarrow 0,\textrm{ a.e. } t\in[0,T].
%\]
%Thus, the monotonicity of $\mathbb E^{\overline{\mathbb Q}}\overline Z^n$ yields the $M_1$ convergence.
%\textcolor{red}{Thus, the previous arguments with $\mu\in\mathcal A$ still works. Maybe this result can be put somewhere else as a lemma, at the beginning!}
Define a set-valued map $\psi$ by
 \begin{align*}\label{set-valued-function}
    \psi:~~&(\mathcal{P}_p(\widetilde{\mathcal{A}}^{m,c}_{0,T+1}))^2\times( \mathcal{P}_p(\widetilde{\mathcal{D}}_{0,T+1}))^3 \rightarrow 2^{(\mathcal{P}_p(\widetilde{\mathcal{A}}^{m,c}_{0,T+1}))^2\times( \mathcal{P}_p(\widetilde{\mathcal{D}}_{0,T+1}))^3},\nonumber\\
    &\mu \mapsto \left\{\left(\mathbb{P}\circ (Z^{(1),[k]})^{-1},\mathbb{P}\circ (Z^{(2),[k]})^{-1},\mathbb P\circ (X^{o,(1)})^{-1},\mathbb P\circ (X^{o,(2)})^{-1},\mathbb P\circ (X^{o,(3)})^{-1}\right): \mathbb{P}\in\mathcal{R}^{m,[k],*}(\mu)\right\}.
 \end{align*}
%$\widetilde{\mathcal{A}}^{m,c}_{0,T+1}$ is compact if endowed with the uniform topology, which is equivalent to $M_1$ in this case. 
Let $S_1$, $S_2$ and $S_3$ be defined as 
\begin{equation*}
	S_i=\{\mathbb P\circ(X^{o,(i)})^{-1} \in\mathcal P_p(\widetilde{\mathcal D}_{0,T+1}):  ~\mathbb P\in\mathcal P_p(\Omega^{m,o}),~\mathbb P(\widetilde\omega(X^{o,(i)},\delta)\geq \eta)\leq \frac{k(\delta)}{\eta},~\mathbb E^{\mathbb P}\left[ \|X^{o,(i)}\|^p_{T+1}\right]\leq \widetilde C_1  \},~i=1,2
\end{equation*}
and
\begin{equation*}
	\begin{split}
	S_3=&~\left\{\mathbb P\circ(X^{o,(3)})^{-1}\in\mathcal P_p(\widetilde{\mathcal D}_{0,T+1}):\mathbb P\in\mathcal P_p(\Omega^{m,o}),\right.\\
	&~\left.\textrm{for any }X^{(3)}-\textrm{stopping time }\tau,~\mathbb E^{\mathbb P}\left[\|X^{o,(3)}_{\tau+\delta}-X^{o,(3)}_\tau\|^2\right]\leq\widetilde C_2\delta,~\mathbb E^{\mathbb P}\left[\|X^{o,(3)}\|^{p}_{T+1}\right]\leq \widetilde C_1 \right \},
	\end{split}
\end{equation*}
where $k(\delta)$, $\widetilde C_1$ and $\widetilde C_2$ are the given by \eqref{k-delta}, \eqref{widetilde-C-1} and \eqref{widetilde-C-2}, respectively, and a non-negative random variable is called a $X^{o,(3)}$-stopping time if $\{\tau\leq t\}\in\sigma(X^{o,(3)}_t,s\leq t)$ for each $t$. By \cite[Theorem 16.10]{Billingsley-1968}, $S_3$ is relatively compact.

Denote by $\bar S_i$ the closure of $S_i$, $i=1,2,3$. Clearly, $\psi$ maps $(\mathcal{P}_p(\widetilde{\mathcal{A}}^{m,c}_{0,T+1}))^2\times \bar S_1\times\bar S_2\times \bar S_3$ into the power set of itself and $(\mathcal{P}_p(\widetilde{\mathcal{A}}^{m,c}_{0,T+1}))^2\times \bar S_1\times\bar S_2\times\bar S_3$ is non-empty, compact and convex. Moreover, by Corollary \ref{existence-singular-control}, $\psi$ is nonempty-valued and upper hemi-continuous. Indeed, the non-emptiness is obvious and to check the upper hemi-continuity we take any $\mu^n\rightarrow\mu$ in $(\mathcal{P}_p(\widetilde{\mathcal{A}}^{m,c}_{0,T+1}))^2\times \bar S_1\times\bar S_2\times\bar S_3$, Corollary \ref{existence-singular-control} implies the existence of subsequence $\mathbb P^{n_j}\in\mathcal R^{m,[k],*}(\mu^{n_j})$ such that $\mathbb P^{n_j}\rightarrow\mathbb P\in\mathcal R^{[k],*}(\mu)$, which implies $\mathbb P^{n_j} \circ(X^{o,(i)})^{-1}\rightarrow\mathbb P\circ(X^{o,(i)})^{-1}$, $i=1,2,3$. Skorokhod representation implies the existence of $(\mathbb Q,\widehat{\mathcal F},\widehat{\mathcal F}_t)$ and $R^{n_j}:=(R^{(1),n_j},R^{(2),n_j})$ and $R:=(R^{(1)},R^{(2)})$ defined on it such that for $i=1,2$ it holds that $\mathbb P^{n_j}\circ (Z^{(i)})^{-1}=\mathbb Q\circ (R^{(i),n_j})^{-1}$, $\mathbb P\circ (Z^{(i)})^{-1}=\mathbb Q\circ (R^{(i)})^{-1}$ and $R^{n_j}\rightarrow R$ in $(\widetilde{\mathcal A}^m_{0,T},M_1)\times(\widetilde{\mathcal A}^m_{0,T},M_1)$ $\mathbb Q$ a.s., which implies $R^{n_j,[k]}\rightarrow R^{[k]}$ in $(\widetilde{\mathcal A}^m_{0,T+1},M_1)\times(\widetilde{\mathcal A}^m_{0,T+1},M_1)$ $\mathbb Q$ a.s. by \cite[Theorem 12.5.2(iii)]{Whitt-2002}, where $ R^{n_j,[k]}_t:=k\int_{t-1/k}^tR^{n_j}_s\,ds$ and $ R^{[k]}_t:=k\int_{t-1/k}^tR_s\,ds$. Since both $ R^{n_j,[k]}$ and $R^{[k]}$ are continuous, it holds that $R^{n_j,[k]}\rightarrow R^{[k]}$ in uniform topology $\mathbb Q$ a.s. by \cite[Theorem 12.5.2(iv)]{Whitt-2002}. Thus, for any continuous function $\phi$ defined on $\widetilde{\mathcal A}^{m,c}_{0,T+1}$ with $|\phi(y)|\leq C(1+\|y\|_{T+1}^p)$, there holds for $i=1,2$ by dominated convergence
\begin{equation*}
	\begin{split}
		&~\int_{\widetilde{\mathcal A}^{m,c}_{0,T+1}}\phi(y)\mathbb P^{n_j}\circ(Z^{(i),[k]})^{-1}(dy)=\mathbb E^{\mathbb P^{n_j}}\phi(Z^{(i),[k]})=\mathbb E^{\mathbb Q}\phi(R^{(i),n_j,[k]})\\
		\rightarrow&~ \mathbb E^{\mathbb Q}\phi(R^{(i),[k]})=\mathbb E^{\mathbb P}\phi(Z^{(i),[k]})=	\int_{\widetilde{\mathcal A}^{m,c}_{0,T+1}}\phi(y)\mathbb P^{}\circ(Z^{(i),[k]})^{-1}(dy),
	\end{split}
\end{equation*}
which implies the upper hemi-continuity of $\psi$.
Therefore, \cite[Corollary 17.55]{AB-1999} is applicable by embedding $(\mathcal{P}_p(\widetilde{\mathcal{A}}^{m,c}_{0,T+1}))^2\times  (\mathcal{P}_p(\widetilde{\mathcal{D}}_{0,T+1}))^3$ into $(\mathcal{M}(\widetilde{\mathcal{C}}^{}_{0,T+1}))^2\times (\mathcal{M}(\widetilde{\mathcal{D}}_{0,T+1}))^3$, the respective product spaces of all bounded signed measures on $\widetilde{\mathcal{C}}^{}_{0,T+1}$ and $\widetilde{\mathcal{D}}^{}_{0,T+1}$ endowed with the weak convergence topology.
\end{proof}

\vspace{-5.6mm}
\subsection{Approximation.}\label{sec:approximation}

\vspace{-3.6mm}
In this section, the extension \eqref{trivial-extension} is valid throughout.  All the limits in this section are taken as $k\rightarrow\infty$.

In Section \ref{section-finite-fuel-smooth}, we have shown for each fixed $k$, there is an equilibrium $\mathbb P^{m,[k],*}\in\mathcal R^{m,[k],*}(\mu^{m,[k],*})$, where $\mu^{m,[k],*}:=(\mu^{(1),m,[k],*},\mu^{(2),m,[k],*},\mu^{(3),m,[k],*},\mu^{(4),m,[k],*},\mu^{(5),m,[k],*}):=(\mathbb P^{m,[k],*}\circ (Z^{(1),[k]})^{-1},\mathbb P^{m,[k],*}\circ (Z^{(2),[k]})^{-1},\mathbb P^{m,[k],*}\circ (X^{o,(1)})^{-1},\mathbb P^{m,[k],*}\circ (X^{o,(2)})^{-1},\mathbb P^{m,[k],*}\circ (X^{o,(3)})^{-1})$. In this section, we establish the existence of equilibria of \eqref{general-MFG} by constucting $\mathbb P^{m,*}\in\mathcal R^{m,*}(\mu^{m,*})$ with $\mu^{m,*}=(\mathbb P^{m,*}\circ (Z^{(1)})^{-1},\mathbb P^{m,*}\circ (Z^{(2)})^{-1},\mathbb P^{m,*}\circ (X^{(3)})^{-1},\mathbb P^{m,*}\circ (X^{(4)})^{-1},\mathbb P^{m,*}\circ (X^{(5)})^{-1})$, from the sequence $\{\mathbb P^{m,[k],*}\}_{k}$.

By Proposition \ref{martingale-representation}, $\mathbb P^{m,[k],*}\in\mathcal R^{m,[k],*}(\mu^{m,[k],*})$ implies the existence of $(\widehat\Omega^k,\widehat{\mathcal F}^k,\widehat{\mathbb P}^k)$ and $(\widehat X^{k},\widehat Q^k,\widehat Z^{k},\widehat W^k,\widehat N^k)$\footnote{Note that $(\widehat\Omega^k,\widehat{\mathcal F}^k,\widehat{\mathbb P}^k)$ and $(\widehat X^{k},\widehat Q^k,\widehat Z^{k},\widehat W^k,\widehat N^k)$ should depend on $m$. Since $m$ is a fixed finite number in this section, we drop this dependence and only keep the dependence on $m$ for the optimal ones, e.g. $\mu^{m,*}$ and $\mathbb P^{m,*}$.} such that
\begin{equation}\label{app-eq-Xhat12}
\widehat X^{(i),k}_t=\int_0^t b^{(i)}(s,\widehat X^{(i),k}_s,\mu^{(i),m,[k],*}_s)\,ds+\kappa^{(i)}\overline\mu^{(i),m,[k],*}_t+\eta^{(i)}\widehat Z^{(i),k,[k]}_t+\int_0^t\sigma^{(i)}(s)\widehat W^{(i),k}_s,~t\in[0,T+1],\quad i=1,2,
\end{equation}
\begin{equation*}
	\begin{split}
\widehat{X}^{(3),k}_t=&~\int_0^t\int_U b^{(3)}(s,\widehat{X}^{(3),k}_s,u)\,\widehat{Q}^k_s(du)ds\\
&~+\int_0^t\int_Ul(s,u)\widetilde{\widehat N^k}(ds,du)+\alpha^{(1)}\widehat{Z}^{(1),k,[k]}_t-\alpha^{(2)}\widehat Z^{(2),k,[k]}_t,\quad t\in[0,T+1]
	\end{split}
\end{equation*}
and
\begin{equation}\label{app-identity-law-4}
\widehat{\mathbb P}^k\circ\left(\widehat X^{k},\widehat Q^k,\widehat Z^{k}\right)^{-1}=\mathbb P^{m,[k],*}\circ \left(X^o,Q,Z\right)^{-1}.
\end{equation}
Let
\begin{equation}\label{step2-hat-Yk}
	\begin{split}
\widehat Y^k_t:=&~\widehat X^{(3),k}_t-\alpha^{(1)} \widehat{Z}^{(1),k,[k]}_t+\alpha^{(2)}\widehat{Z}^{(2),k,[k]}_t\\
=&~\int_0^t\int_U b^{(3)}(s,\widehat{X}^{(3),k}_s,u)\,\widehat{Q}^k_s(du)ds+\int_0^t\int_Ul(s,u)\widetilde{\widehat N^k}(ds,du),~t\in[0,T+1].
	\end{split}
\end{equation}
Thus, we have
\[
\widehat{\mathbb P}^{k}\circ\left(\widehat{Y}^k,\widehat X^{(1),k},\widehat{X}^{(2),k},\widehat Z^{(1),k},\widehat Z^{(2),k},\widehat Q^k\right)^{-1}=\mathbb P^{m,[k],*}\circ\left({Y}^{[k]},X^{o,(1)},{X}^{o,(2)},Z^{(1)},Z^{(2)},Q\right)^{-1},
\]
where $Y^{[k]}$ is a stochastic process such that $\mathbb P^{m,[k],*}\left(Y^{[k]}=X^{o,(3)}-\alpha^{(1)}{Z}^{(1),[k]}+\alpha^{(2)} Z^{(2),[k]}\right)=1$.
The same argument as in Lemma \ref{relative-compactness-union-control-rule} yields the relative compactness of $\widehat{\mathbb P}^k\circ\left(\widehat{Y}^k,\widehat X^{(1),k},\widehat{X}^{(2),k},\widehat Z^{(1),k},\widehat Z^{(2),k},\widehat Q^k\right)^{-1}$, which implies a weak limit $(\check Y,\check X^{(1)},\check{X}^{(2)},\check Z^{(1)},\check Z^{(2)},\check Q)$. Skorokhod representation theorem implies that there exists a probability space $(\mathring\Omega,\mathring{\mathcal F},\mathring{\mathbb Q})$, two sequences of stochastic processes $$(\mathring Y^k,\mathring X^{(1),k},\mathring{X}^{(2),k},\mathring Z^{(1),k},\mathring Z^{(2),k},\mathring Q^k)\textrm{ and }(\mathring Y,\mathring X^{(1)},\mathring{X}^{(2)},\mathring Z^{(1)},\mathring Z^{(2)},\mathring Q)$$ such that
\begin{equation}\label{step2-id-k-bar-hat}
\left(\mathring Y^k,\mathring X^{(1),k},\mathring{X}^{(2),k},\mathring Z^{(1),k},\mathring Z^{(2),k},\mathring Q^k\right)\overset{d}{=}\left(\widehat{Y}^k,\widehat X^{(1),k},\widehat{X}^{(2),k},\widehat Z^{(1),k},\widehat Z^{(2),k},\widehat Q^k\right),
\end{equation}
\[
\left(\mathring Y,\mathring X^{(1)},\mathring{X}^{(2)},\mathring Z^{(1)},\mathring Z^{(2)},\mathring Q\right)\overset{d}{=}\left(\check{Y},\check X^{(1)},\check{X}^{(2)},\check Z^{(1)},\check Z^{(2)},\check Q\right),
\]
and
\begin{equation}\label{step2-a.s.-1}
\left(\mathring Y^k,\mathring X^{(1),k},\mathring{X}^{(2),k},\mathring Z^{(1),k},\mathring Z^{(2),k},\mathring Q^k\right)\rightarrow\left(\mathring Y,\mathring X^{(1)},\mathring{X}^{(2)},\mathring Z^{(1)},\mathring Z^{(2)},\mathring Q\right), ~\mathring{\mathbb Q}\textrm{ a.s.}.
\end{equation}
Let
\begin{equation}\label{step2-bar-X3k-bar-X3}
\mathring X^{(3),k}:=\mathring Y^k+\alpha^{(1)}\mathring Z^{(1),k,[k]}-\alpha^{(2)}\mathring Z^{(2),k,[k]}\textrm{ and }\mathring X^{(3)}:=\mathring Y+\alpha^{(1)}\mathring Z^{(1)}-\alpha^{(2)}\mathring Z^{(2)}.
\end{equation}
 Thus, 
\eqref{step2-a.s.-1} and \eqref{step2-bar-X3k-bar-X3} imply
the following convergence result
\begin{equation}\label{step2-convergence-bar-X3k-to-bar-X-3}
\mathbb E^{\mathring{\mathbb Q}}\left[\int_0^{T+1}\left\|\mathring X^{(3),k}_t-\mathring X^{(3)}_t\right\|^p\,dt\right]\rightarrow 0\quad\textrm{ and }\quad \mathring X^{(3),k}_{T+1}\rightarrow \mathring X^{(3)}_{T+1}\quad\mathring{\mathbb Q}\textrm{ a.s.}.
\end{equation}

Moreover, \eqref{app-identity-law-4}, \eqref{step2-hat-Yk}, \eqref{step2-id-k-bar-hat} and \eqref{step2-bar-X3k-bar-X3}  imply that
\begin{equation}\label{app-identity-law(3)}
\begin{split}
&~\mathring{\mathbb Q}\circ\left(\mathring Y^k,\mathring X^{(1),k},\mathring{X}^{(2),k},\mathring X^{(3),k},\mathring Z^{(1),k},\mathring Z^{(2),k},\mathring Q^k\right)^{-1}\\
=&~\widehat{\mathbb P}^k\circ\left(\widehat Y^k,\widehat X^{(1),k},\widehat{X}^{(2),k},\widehat X^{(3),k},\widehat Z^{(1),k},\widehat Z^{(2),k},\widehat Q^k\right)^{-1}\\
=&~\mathbb P^{m,[k],*}\circ\left({Y}^{[k]},X^{o,(1)},{X}^{o,(2)},X^{o,(3)},Z^{(1)},Z^{(2)},Q\right)^{-1}.
\end{split}
\end{equation}
Define
\begin{equation}\label{def-mu-*}
	\begin{split}
\mu^{m,*}=&~(\mu^{(1),m,*},\mu^{(2),m,*},\mu^{(3),m,*},\mu^{(4),m,*},\mu^{(5),m,*})\\
:=&~\left(\mathring{\mathbb Q}\circ\left(\mathring Z^{(1)}\right)^{-1},\mathring{\mathbb Q}\circ\left(\mathring Z^{(2)}\right)^{-1},\mathring{\mathbb Q}\circ\left(\mathring X^{(1)}\right)^{-1},\mathring{\mathbb Q}\circ\left(\mathring X^{(2)}\right)^{-1},\mathring{\mathbb Q}\circ\left(\mathring X^{(3)}\right)^{-1}\right).
	\end{split}
\end{equation}
and
\begin{equation}\label{def-P-*}
\mathbb P^{m,*}:=\mathring {\mathbb Q}\circ\left(\mathring X^{(1)},\mathring X^{(2)},\mathring X^{(3)},\mathring Q,\mathring Z^{(1)},\mathring Z^{(2)}\right)^{-1}.
\end{equation}

The next lemma shows the admissbility of $\mathbb P^{m,*}$.
\begin{lemma}\label{lem:stability-martingale-2}
Assume assumptions $\mathcal A_1$, $\mathcal A_4$ and $\mathcal A_6$ hold. 
Let $\mu^{m,*}$ and $\mathbb P^{m,*}$ be defined as \eqref{def-mu-*} and \eqref{def-P-*}, respectively. Then we have $\mathbb P^{m,*}\in\mathcal R^m(\mu^{m,*})$. 
\end{lemma}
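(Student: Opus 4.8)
I would verify the three items of Definition~\ref{control-rule-mu} for $\mathbb{P}^{m,*}$ by passing to the limit $k\to\infty$ in the properties that each $\mathbb{P}^{m,[k],*}\in\mathcal{R}^{m,[k]}(\mu^{m,[k],*})$ enjoys, read on the space $(\overline{\Omega},\overline{\mathcal{F}},\overline{\mathbb{Q}})$ of \eqref{step2-a.s.-1} through the identities \eqref{step2-id-k-bar-hat} and \eqref{app-identity-law(3)}. Item~1 is free: take $Y$ to be the coordinate combination $X^{(3)}-\alpha^{(1)}Z^{(1)}+\alpha^{(2)}Z^{(2)}$, which is $\mathcal{F}_t$-adapted and c\`adl\`ag, hence lies in $\widetilde{\mathcal{D}}_{0,T}$; by \eqref{step2-bar-X3k-bar-X3} and \eqref{app-identity-law(3)} the $\overline{\mathbb{Q}}$-law of $(\overline{X}^{(1)},\overline{X}^{(2)},\overline{X}^{(3)},\overline{Q},\overline{Z}^{(1)},\overline{Z}^{(2)},\overline{Y})$ equals the $\mathbb{P}^{m,*}$-law of $(X^{(1)},X^{(2)},X^{(3)},Q,Z^{(1)},Z^{(2)},Y)$, so the required identity holds $\mathbb{P}^{m,*}$-a.s. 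Since having first marginal the Lebesgue measure on $[0,T]$ is a weakly closed condition, $\overline{Q}$ still lies in $\widetilde{\mathcal{U}}_{0,T}$ and has the predictable disintegration $\overline{Q}(dt,du)=\overline{Q}_t(du)\,dt$ by \cite[Lemma 3.2]{L-2015}; and since the trivial extension \eqref{trivial-extension} makes every limit process constant on $[T,T+1]$, proving the martingale properties of Item~2 on $[0,T+1]$ is the same as proving them on $[0,T]$.

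\textbf{The key convergence.} The one genuinely topological input I need is that $\overline{Z}^{(i),k,[k]}\to\overline{Z}^{(i)}$ in $(\widetilde{\mathcal{A}}^{m}_{0,T+1},M_1)$ $\overline{\mathbb{Q}}$-a.s.\ for $i=1,2$. This will follow by combining the a.s.\ convergence $\overline{Z}^{(i),k}\to\overline{Z}^{(i)}$ in $(\widetilde{\mathcal{A}}^m_{0,T},M_1)$ from \eqref{step2-a.s.-1}, the uniform bound $\|\overline{Z}^{(i),k}\|\le m$, and \cite[Theorem 12.9.3(ii)]{Whitt-2002} as in \eqref{KPP-convergence}: writing $\overline{Z}^{(i),[k]}_t:=k\int_{t-1/k}^{t}\overline{Z}^{(i)}_s\,ds$ one has $\overline{Z}^{(i),[k]}\to\overline{Z}^{(i)}$ in $M_1$, while $\overline{Z}^{(i),k,[k]}-\overline{Z}^{(i),[k]}$ is a moving average of $\overline{Z}^{(i),k}-\overline{Z}^{(i)}$, handled by a diagonal argument; this is where $J_1$ would not suffice. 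Together with $\overline{X}^{(i),k}\to\overline{X}^{(i)}$ in $M_1$ ($i=1,2$) and $\overline{Y}^{k}\to\overline{Y}$ in $M_1$ from \eqref{step2-a.s.-1}, and with $\overline{X}^{(3),k}\to\overline{X}^{(3)}$ in $L^p((0,T+1);dt)$ a.s.\ and $\overline{X}^{(3),k}_{T+1}\to\overline{X}^{(3)}_{T+1}$ from \eqref{step2-convergence-bar-X3k-to-bar-X-3}, this yields all the convergences used below, while the moment bounds \eqref{widetilde-C-1} (valid for the extended coefficients) supply the uniform integrability.

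\textbf{Passing to the limit in the martingale problems.} Fix $\phi\in\mathcal{C}^2_b(\mathbb{R}^d;\mathbb{R})$, $0\le s<t\le T+1$, and a bounded continuous $\mathcal{F}^o_s$-measurable $F$. For $i=1,2$, the martingale $\mathcal{M}^{\phi,\overline{X}^{(i),k},\overline{Z}^{(i),k},\mu^{(i),m,[k],*},[k]}$ of \eqref{martingale-def-smooth-12} for $\mathbb{P}^{m,[k],*}$, read on $\overline{\Omega}$, carries no jump term ($\overline{Z}^{(i),k,[k]}$ and $\overline{\mu}^{(i),m,[k],*}$ being continuous), so its $(s,t)$-increment against $F(\overline{X}^{k},\overline{Q}^{k},\overline{Z}^{k})$ has zero mean. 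Letting $k\to\infty$: $\phi(\overline{X}^{(i),k}_t)\to\phi(\overline{X}^{(i)}_t)$ at continuity points of $\overline{X}^{(i)}$, $\int_0^t\mathbb{L}^{(i)}\phi(r,\overline{X}^{(i),k}_r)\,dr\to\int_0^t\mathbb{L}^{(i)}\phi(r,\overline{X}^{(i)}_r)\,dr$ by bounded convergence, the monotone integrator $\kappa^{(i)}\overline{\mu}^{(i),m,[k],*}+\eta^{(i)}\overline{Z}^{(i),k,[k]}$ converges in $M_1$ to $\kappa^{(i)}\overline{\mu}^{(i),m,*}+\eta^{(i)}\overline{Z}^{(i)}$ (the sign constraint $\kappa^{(i)}\eta^{(i)}\ge0$ from $\mathcal{A}_4$ forbidding opposite-direction simultaneous jumps, so the sum does converge), and by the $M_1$-Stieltjes convergence argument of \cite[Lemma 3.3]{FH-2017} the term $\phi(\overline{X}^{(i),k}_t)-\int_0^t\nabla\phi(\overline{X}^{(i),k}_r)\cdot d(\kappa^{(i)}\overline{\mu}^{(i),m,[k],*}_r+\eta^{(i)}\overline{Z}^{(i),k,[k]}_r)$ converges to its limit analogue \emph{plus} the jump correction $\sum_{0\le r\le t}(\phi(\overline{X}^{(i)}_r)-\phi(\overline{X}^{(i)}_{r-})-\nabla\phi(\overline{X}^{(i)}_{r-})\cdot\Delta\overline{X}^{(i)}_r)$ produced by the graph completion of the integrator at its discontinuities; bounded convergence together with \eqref{app-identity-law(3)} then makes $\mathcal{M}^{\phi,X^{(1)},Z^{(1)},\mu^{(1)}}$ and $\mathcal{M}^{\phi,X^{(2)},Z^{(2)},\mu^{(2)}}$ (evaluated with $\mu^{(i)}=\mu^{(i),m,*}$) continuous $\mathbb{P}^{m,*}$-martingales, first for a.e.\ $s<t$ and then for all $s<t$ by right-continuity. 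For $i=3$ I would repeat the argument of Proposition~\ref{admissibility-limit-P-proposition}: $\mathcal{M}^{\phi,X^{(3)},Y,Q}$ of \eqref{martingale-def} sees $X^{(3)}$ and $Y$ only under a $dt$-integral apart from the single evaluation $\phi(Y_t)$, so \eqref{step2-convergence-bar-X3k-to-bar-X-3} (hence $\overline{X}^{(3),k}_r\to\overline{X}^{(3)}_r$ for a.e.\ $r$ along a subsequence), $\overline{Y}^{k}\to\overline{Y}$, $\overline{Q}^{k}\to\overline{Q}$, and the continuity in $u$ and boundedness of $b^{(3)},l,\lambda$ give $\int_0^t\int_U\mathcal{L}\phi(r,\overline{X}^{(3),k}_r,\overline{Y}^{k}_r,u)\,\overline{Q}^{k}_r(du)dr\to\int_0^t\int_U\mathcal{L}\phi(r,\overline{X}^{(3)}_r,\overline{Y}_r,u)\,\overline{Q}_r(du)dr$ and $\phi(\overline{Y}^{k}_t)\to\phi(\overline{Y}_t)$ for a.e.\ $t$ and at $t=T+1$; bounded convergence, \eqref{app-identity-law(3)}, and the right-continuity of $t\mapsto\mathcal{M}^{\phi,X^{(3)},Y,Q}_t$ then establish the third martingale requirement of Item~2, completing the proof.

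\textbf{Main obstacle.} The hard part is the first two martingales: the smoothed pre-limit martingales \eqref{martingale-def-smooth-12} contain no jump term, whereas the target in Definition~\ref{control-rule-mu} must pick up $\sum(\phi(X_r)-\phi(X_{r-})-\nabla\phi(X_{r-})\cdot\Delta X_r)$ along the (possibly simultaneous, possibly oppositely directed across components, yet same-direction within each integrator thanks to $\kappa^{(i)}\eta^{(i)}\ge0$) jumps of $Z^{(1)},Z^{(2)}$. Recovering this term is not a naive limit of Riemann--Stieltjes integrals against an $M_1$-convergent monotone integrator; it is exactly the $M_1$ graph-completion phenomenon, which is what forces the whole two-step approximation to live in the $M_1$ rather than the $J_1$ topology, as already exploited in \cite{FH-2017}.
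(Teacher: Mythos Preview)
Your overall strategy matches the paper's: transfer the martingale problems from $\mathbb{P}^{m,[k],*}$ to $\mathbb{P}^{m,*}$ on the Skorokhod space $(\overline\Omega,\overline{\mathbb Q})$ via \eqref{step2-a.s.-1}--\eqref{app-identity-law(3)}. For the third martingale $\mathcal{M}^{\phi,X^{(3)},Y,Q}$ your argument is essentially identical to the paper's Step~2 (Fubini in $(t,t_1,\dots,t_n)$, then right-continuity). For the first two martingales you actually go further than the paper, which only writes ``similarly'': you correctly single out the emergence of the jump correction $\sum(\phi(X_r)-\phi(X_{r-})-\nabla\phi(X_{r-})\cdot\Delta X_r)$ as the genuine issue and propose to recover it through the $M_1$ graph-completion of the monotone integrator, in the spirit of \cite{FH-2017}. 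Your pointwise convergence $\overline Z^{(i),k,[k]}\to\overline Z^{(i)}$ in $M_1$ is correct, though the justification is not a ``diagonal argument'' but a direct monotonicity squeeze: at any continuity point $t$ of $\overline Z^{(i)}$ one has $\overline Z^{(i),k,[k]}_t\in[\overline Z^{(i),k}_{t-1/k},\overline Z^{(i),k}_t]$ and both endpoints converge to $\overline Z^{(i)}_t$.

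There is, however, one real gap. You dismiss the support condition ``$\mathbb P^{m,*}$ is a probability on $\Omega^m$'' with the sentence ``the trivial extension \eqref{trivial-extension} makes every limit process constant on $[T,T+1]$'', but this is precisely what must be \emph{proved} for $\overline X^{(1)},\overline X^{(2)}$. The pre-limit processes $\overline X^{(i),k}$ are \emph{not} constant on $[T,T+1]$: both $\overline Z^{(i),k,[k]}$ and $\overline\mu^{(i),m,[k],*}$ keep increasing on $[T,T+1/k]$, so $\overline X^{(i),k}\notin\widetilde{\mathcal D}_{0,T}$. That the $M_1$-limit lands in the closed subset $\widetilde{\mathcal D}_{0,T}$ therefore needs an argument. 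The paper supplies one in its Step~1: it decomposes $\widehat X^{(i),k}=\widehat K^{(i),k}+\kappa^{(i)}\overline\mu^{(i),m,[k],*}+\eta^{(i)}\widehat Z^{(i),k,[k]}$ with $\widehat K^{(i),k}\in\widetilde{\mathcal C}_{0,T}$ (this uses \eqref{trivial-extension} to kill $b^{(i)},\sigma^{(i)}$ on $(T,T+1]$), passes to a joint limit via a second Skorokhod representation, and reads off $\overline X^{'(i)}=\overline K^{(i)}+\kappa^{(i)}\overline\mu^{(i),m,*}+\eta^{(i)}\overline Z^{'(i)}\in\widetilde{\mathcal D}_{0,T}$ because each summand lies in a closed subspace of paths constant after $T$. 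Without this, $\mathbb P^{m,*}$ is only known to be supported on $\Omega^{m,o}$, and Definition~\ref{control-rule-mu} does not even apply. You should insert this step before tackling Item~2.
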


\vspace{-7.6mm}
\begin{proof}
The proof is split into two steps. In Step 1, we verify $\mathbb P^{m,*}$ is supported on the original canonical space $\Omega^m$ and we recall $\Omega^m=\widetilde{\mathcal D}_{0,T}\times\widetilde{\mathcal D}_{0,T}\times \widetilde{\mathcal D}_{0,T}\times \widetilde{\mathcal U}_{0,T}\times\widetilde{\mathcal A}^m_{0,T}\times \widetilde{\mathcal A}^m_{0,T}$. In Step 2, we verify the martingale properties.

\textbf{Step 1.}  
By the definition of $\mathbb P^{m,[k],*}$ and the equation \eqref{app-identity-law-4},  $\widehat{Z}^k\in\widetilde{\mathcal A}^m_{0,T}\times \widetilde{\mathcal A}^m_{0,T}$. The trivial extension \eqref{trivial-extension} and \eqref{step2-hat-Yk} imply $\widehat Y^k\in\widetilde{\mathcal D}_{0,T}$. Thus, the tuple of stochastic processes $(\widehat{Y}^k,\widehat X^{(1),k},\widehat{X}^{(2),k},\widehat Z^{(1),k},\widehat Z^{(2),k},\widehat Q^k)$ in fact takes values in
the product space $\widetilde{\mathcal D}_{0,T}\times \widetilde{\mathcal D}_{0,T+1}\times \widetilde{\mathcal D}_{0,T+1}\times\widetilde{\mathcal A}^m_{0,T}\times\widetilde{\mathcal A}^m_{0,T}\times\widetilde{\mathcal U}_{0,T}$ a.s.,  so does $(\mathring{Y}^k,\mathring X^{(1),k},\mathring{X}^{(2),k},\mathring Z^{(1),k},\mathring Z^{(2),k},\mathring Q^k)$ by \eqref{step2-id-k-bar-hat}.

Since $\widetilde{\mathcal D}_{0,T}$, $\widetilde{\mathcal A}^m_{0,T}$ and $\widetilde{\mathcal U}_{0,T}$ are closed, the convergence \eqref{step2-a.s.-1} implies $\mathring Z\in\widetilde{\mathcal A}^m_{0,T}\times\widetilde{\mathcal A}^m_{0,T}$, $\mathring Y\in\widetilde{\mathcal D}_{0,T}$ and $\mathring Q\in\widetilde{\mathcal U}_{0,T}$. Thus, \eqref{step2-bar-X3k-bar-X3} implies $\mathring X^{(3),k}\in\widetilde{\mathcal D}_{0,T+1}$ and $\mathring X^{(3)}\in\widetilde{\mathcal D}_{0,T}$. It remains to prove $(\mathring X^{(1)},\mathring X^{(2)})\in\widetilde{\mathcal D}_{0,T}\times \widetilde{\mathcal D}_{0,T}$.
By \eqref{app-eq-Xhat12} and the trivial extension \eqref{trivial-extension} there exists $\widehat K^{(1),k}\in\widetilde{\mathcal C}_{0,T}$ such that $\widehat X^{(1),k}=\widehat K^{(1),k}+\kappa^{(1)}\overline\mu^{(1),m,[k],*}+\eta^{(1)}\widehat Z^{(1),k,[k]}$
and the same argument as Lemma \ref{relative-compactness-union-control-rule} implies the relative compactness of $\widehat{\mathbb P}^{k}\circ(\widehat X^{(1),k},\widehat K^{(1),k},\widehat Z^{(1),k})^{-1}$ with a weak limit denoted by $(X',K',Z')$. Skorokhod representation implies
	\[
	(\widehat X^{(1),k},\widehat K^{(1),k},\widehat Z^{(1),k})\overset{d}{=}(\widehat X^{'(1),k},\widehat K^{(1),k},\widehat Z^{'(1),k})\quad\textrm{and}\quad (\widehat X^{'(1)},\widehat K^{(1)},\widehat Z^{'(1)})\overset{d}{=}(X',K',Z')
	\]
	and $(\widehat X^{'(1),k},\widehat K^{(1),k},\widehat Z^{'(1),k})\rightarrow (\widehat X^{'(1)},\widehat K^{(1)},\widehat Z^{'(1)})$ in $\widetilde{\mathcal D}_{0,T+1}\times\widetilde{\mathcal C}_{0,T}\times\widetilde{\mathcal A}^m_{0,T}$, which together with \eqref{app-eq-Xhat12} yield
	\[
	\widehat X^{'(1)}=\widehat K^{(1)}+\kappa^{(1)}\overline\mu^{(1),m,*}+\eta^{(1)}\widehat Z^{'(1)}.
	\]
	Thus, $\widehat X^{'(1)}\in\widetilde{\mathcal D}_{0,T}$. Note that by the uniqueness of the limit $\widehat X^{'(1)}\overset{d}{=}\mathring X^{(1)}$, which implies $\mathring X^{(1)}\in\widetilde{\mathcal D}_{0,T}$. The same result holds for $\mathring X^{(2)}$.

\textbf{Step 2.}
 In this step, we check $\mathcal M^{\phi,X^{(3)},Y,Q}$ is a $(\mathbb P^*,(\mathcal F_t)_{0\leq t\leq T})$ martingale. The martingale property of $\mathcal M^{\phi,X^{(1)},Z^{(1)},\mu^*}$ and $\mathcal M^{\phi,X^{(2)},Z^{(2)},\mu^*}$ can be obtained similarly.

%For any $n$, any bounded and continuous function $\Phi$, by \eqref{step2-a.s.-1}, \eqref{step2-convergence-bar-X3k-to-bar-X-3} and dominated convergence it holds up to a subsequence
%\begin{equation}\label{app-martingale-X3-convergence-1}
%	\begin{split}
%	\lim_{k\rightarrow\infty}\int_0^{T+1}\int_0^{T+1}\cdots\int_0^{T+1}&~	\left|\mathcal M^{\phi,\overline X^{(3),k},\overline Y^k,\overline Q^k}_t\Phi(\overline \zeta^{k}_{t_1},\cdots,\overline \zeta^{k}_{t_n})\right.\\
%	&~\left.-\mathcal M^{\phi,\overline X^{(3)},\overline Y,\overline Q}_t\Phi(\overline \zeta^{}_{t_1},\cdots,\overline \zeta^{}_{t_n})\right|\,dt_1\cdots d{t_n}dt=0\quad \overline{\mathbb Q}\textrm{ a.s.},
%	\end{split}
%\end{equation}
%where 
Boundedness and linear growth of the coefficients, compactness of $U$, \eqref{step2-a.s.-1}, \eqref{step2-convergence-bar-X3k-to-bar-X-3} and dominated convergence yield that for any bounded and continuous $\Phi$, up to a subsequence,
\begin{equation*}\label{app-martingale-X3-convergence-2}
	\begin{split}
	\lim_{k\rightarrow\infty}\mathbb E^{{\mathring Q}}\left[\int_0^{T+1}\int_0^{T+1}\cdots\int_0^{T+1}\right.&~	\left|\mathcal M^{\phi,\mathring X^{(3),k},\mathring Y^k,\mathring Q^k}_t\Phi(\mathring \zeta^{k}_{t_1},\cdots,\mathring \zeta^{k}_{t_n})\right.\\
	&~\left.\left.-\mathcal M^{\phi,\mathring X^{(3)},\mathring Y,\mathring Q}_t\Phi(\mathring \zeta^{}_{t_1},\cdots,\mathring \zeta^{}_{t_n})\right|\,dt_1\cdots d{t_n}dt\right]=0,
\end{split}
\end{equation*}
where
\begin{equation}\label{app-zeta-1}
\mathring\zeta^{k}_\cdot:=(\mathring X^{k}_{\cdot},\mathring Z^{k}_{\cdot})\quad\textrm{ and }\quad
\mathring\zeta^{}_{\cdot}:=(\mathring X^{}_\cdot,\mathring Z^{}_\cdot).  
\end{equation}
It implies up to a subsequence for almost every $(t,t_1,\cdots,t_n)\in[0,T+1]^{n+1}$
\begin{equation}\label{app-martingale-X3-convergence-2}
\begin{split}
\lim_{k\rightarrow\infty}\mathbb E^{\mathring{\mathbb Q}}	\left|\mathcal M^{\phi,\mathring X^{(3),k},\mathring Y^k,\mathring Q^k}_t\Phi(\mathring \zeta^{k}_{t_1},\cdots,\mathring \zeta^{k}_{t_n})\right.\left.-\mathcal M^{\phi,\mathring X^{(3)},\mathring Y,\mathring Q}_t\Phi(\mathring \zeta^{}_{t_1},\cdots,\mathring \zeta^{}_{t_n})\right|=0.
\end{split}
\end{equation}
Thus, for almost every $(s,t,t_1,\cdots,t_n)\in[0,T+1]^{n+2}$ with $(t,t_1,\cdots,t_n)\in[s,T+1]\times[0,s]^{n}$, any continuous and bounded function $\Phi$ on $(\mathbb R^{d}\times\mathbb R^d\times\mathbb R^d\times\mathbb R^d\times\mathbb R^d)^n$ and any continuous and bounded function $\varphi$ which is defined on $\widetilde {\mathcal U}_{0,T}$ and $\mathcal F_s^Q$ measurable we have
\begin{equation*}
	\begin{split}
	0=&~\mathbb E^{{\mathbb P}^{m,[k],*}}\left(\mathcal M^{\phi,X^{o,(3)},Y^{[k]},Q}_t-\mathcal M^{\phi,X^{o,(3)},Y^{[k]},Q}_s\right)\Phi(\zeta^{o}_{t_1},\cdots,\zeta^{o}_{t_n})\varphi(Q)\\
%	\overset{\textrm{by }\eqref{app-identity-law(3)}}{=}&~\mathbb E^{\widehat{\mathbb P}^{k}}\left(\mathcal M^{\phi,\widehat X^{(3),k},\widehat Y^{k},\widehat Q^k}_t-\mathcal M^{\phi,\widehat X^{(3),k},\widehat Y^{k},\widehat Q^k}_s\right)\Phi(\widehat \zeta^{k}_{t_1},\cdots,\widehat \zeta^{k}_{t_n})\varphi(\widehat Q^k)\\
	\overset{\textrm{by }\eqref{app-identity-law(3)}}{=}&~\mathbb E^{\mathring{\mathbb Q}^{}}\left(\mathcal M^{\phi,\mathring X^{(3),k},\mathring Y^{k},\mathring Q^k}_t-\mathcal M^{\phi,\mathring X^{(3),k},\mathring Y^{k},\mathring Q^k}_s\right)\Phi(\mathring \zeta^{k}_{t_1},\cdots,\mathring \zeta^{k}_{t_n})\varphi(\mathring Q^k)\\
	\overset{\textrm{by }\eqref{app-martingale-X3-convergence-2}}{\rightarrow}&~\mathbb E^{\mathring{\mathbb Q}^{}}\left(\mathcal M^{\phi,\mathring X^{(3)},\mathring Y^{},\mathring Q}_t-\mathcal M^{\phi,\mathring X^{(3)},\mathring Y^{},\mathring Q}_s\right)\Phi(\mathring \zeta^{}_{t_1},\cdots,\mathring \zeta^{}_{t_n})\varphi(\mathring Q)\\
	\overset{\textrm{by }\eqref{def-P-*}}{=}&~\mathbb E^{{\mathbb P}^{m,*}}\left(\mathcal M^{\phi,X^{(3)},Y^{},Q}_t-\mathcal M^{\phi,X^{(3)},Y^{}, Q}_s\right)\Phi( \zeta^{}_{t_1},\cdots, \zeta^{}_{t_n})\varphi(Q),
	\end{split}
\end{equation*}
where $\mathring\zeta$ and $\mathring\zeta^k$ are defined as \eqref{app-zeta-1}, and $\zeta^o$ and $\zeta$ are defined as 
%\begin{equation*}
$\zeta^o_{\cdot}:=(X^o_{\cdot},Z_\cdot)$ and $\zeta^{}_{\cdot}:=(X^{}_{\cdot},Z^{}_{\cdot}). $ 
%\end{equation*}
 By the right continuity of $\mathcal M^{\phi,X^{(3)},Y,Q}$ and $X^{(3)}$ we have for any $(t,t_1,\cdots,t_n)\in[s,T+1]\times[0,s]^{n}$ 
 \[
 	\mathbb E^{{\mathbb P}^{m,*}}\left(\mathcal M^{\phi,X^{(3)},Y^{},Q}_t-\mathcal M^{\phi,X^{(3)},Y^{}, Q}_s\right)\Phi(\zeta^{}_{t_1},\cdots,\zeta^{}_{t_n})\varphi(Q)=0.
 \]
Finally, using continuous functions $\Phi$ and $\varphi$ to approximate indicator function and by monotone class theorem we get $\mathcal M^{\phi,X^{(3)},Y^{},Q}$ is a $(\mathbb P^{m,*},(\mathcal F_t)_{0\leq t\leq T})$ martingale. 
\end{proof}
%
%\begin{lemma}\label{app-convergence-1st-moment}
%For $i=1,2$, up to a subsequence $\{\mathbb P^{m,[k],*}\circ(Z^{(i),[k]})^{-1}\}_k$ converges to $\mu^{(i),m,*}$ in $\mathcal P_{p}(\widetilde{\mathcal A}^m_{0,T+1})$, where $\mu^{(i),m,*}$ is defined in \eqref{def-mu-*}. In particular,  we have the convergence of the first order moment, i.e.,
%\[
%	\int_{\widetilde{\mathcal A}^m_{0,T+1}}x\,\mathbb P^{m,[k],*}\circ(Z^{(i),[k]})^{-1}(dx)\rightarrow\int_{\widetilde{\mathcal A}^m_{0,T+1}}x\,\mu^{(i),m,*}(dx),\quad\textrm{in }(\widetilde{\mathcal A}^m_{0,T+1},M_1).
%\]
%\end{lemma}
%\begin{proof}
%The definition of Wasserstein metric, boundedness of $Z$, the convergence \eqref{KPP-convergence} and \eqref{app-identity-law(3)}-\eqref{def-P-*} imply for $i=1,2$
%\[
%	\mathcal W^p_p\left(\mathbb P^{m,[k],*}\circ(Z^{(i),[k]})^{-1},\mathbb P^{m,*}\circ(Z^{(i)})^{-1}\right)\leq\mathbb E^{\overline{\mathbb Q}}\left[d_{WM_1}(\overline Z^{(i),k,[k]},\overline Z^{(i)})^p\right]\rightarrow 0,
%\]
%where $d_{WM_1}$ is the weak $M_1$ metric. The convergence of first order moments follows.
%\end{proof}

\vspace{-3.6mm}
For each $(\underline{\mathbb P},\underline X,\underline Q,\underline\mu)$, define

\vspace{-7.6mm}
\begin{equation*}\label{eq:mathcal J}
	\begin{split}
	\mathcal J(\underline{\mathbb P},\underline X,\underline Q,\underline\mu):=&~\mathbb E^{\underline{\mathbb P}}\left[\sum_{j=1}^d\int_{\underline X^{(1)}_{j,0-}}^{\underline X^{(1)}_{j,T+1}}h_j(x)\,dx-\int_0^Th(\underline X^{(1)}_t)\cdot b^{(1)}(t,\underline X^{(1)}_t,\underline\mu_t^{(1)})\,dt-\frac{1}{2}\sum_{j=1}^d\int_0^Ta^{(1)}_{jj}(t)h'_j(\underline X^{(1)}_{j,t})\,dt\right.\\
	&~+\sum_{j=1}^d\int_{\underline X^{(2)}_{j,0-}}^{\underline X^{(2)}_{j,T+1}}h_j(x)\,dx-\int_0^Th(\underline X^{(2)}_t)\cdot b^{(2)}(t,\underline X^{(2)}_t,\underline\mu_t^{(2)})\,dt-\frac{1}{2}\sum_{j=1}^d\int_0^Ta^{(2)}_{jj}(t)h'_j(\underline X^{(2)}_{j,t})\,dt\\
	&~\left.+\int_0^T\int_Uf(t,\underline X_t,\underline\mu_t,u)\,\underline Q_t(du)dt+g( \underline X_{T+1},\underline\mu_{T+1})\right].
	\end{split}
\end{equation*}
By Lemma \ref{lem-new-cost} and \eqref{trivial-extension} it holds that

\vspace{-7.6mm}
\begin{equation}\label{cost-Jo-mathcalJ}
J^o(\mathbb P;\nu)=\mathcal J(\mathbb P,X^o,Q,\nu)\quad \textrm{and}\quad J(\mathbb P;\mu)=\mathcal J(\mathbb P,X,Q,\mu)\textrm{ if }\mu_{T+1}=\mu_T.
\end{equation}
With \eqref{cost-Jo-mathcalJ}, the next Lemma shows any admissible relaxed control corresponding to $\mu^{m,*}$ inducing a finite cost can be approximated by a sequence of admissible relaxed controls corresponding to $\mu^{m,[k],*}$.
\begin{lemma}\label{app-convergence-any-law}
For any $\mathbb P\in\mathcal R^m(\mu^{m,*})$ with $J(\mathbb P;\mu^{m,*})<\infty$, we can find a sequence $\mathbb P^k\in\mathcal R^{m,[k]}(\mu^{m,[k],*})$ such that
$
    J^o(\mathbb P^k;\mu^{m,[k],*})\rightarrow J(\mathbb P;\mu^{m,*}),
$
where $\mu^{m,*}$ is defined in \eqref{def-mu-*}.
\end{lemma}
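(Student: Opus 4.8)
The plan is to construct the approximating controls $\mathbb{P}^k$ directly from $\mathbb{P}$ by re-solving the state equations with smoothed singular controls while keeping the driving noise fixed, and then to let $k\to\infty$ in the cost term by term. First I would realize $\mathbb{P}$: by Proposition \ref{martingale-representation}, write $\mathbb{P}\circ(X,Q,Z)^{-1}=\mathrm{Law}(X,Z,Q)$ for a weak solution $(X,Z,Q,W,N)$ of \eqref{resume-SDE-12}--\eqref{resume-SDE-3} with mean field $\mu^{m,*}$, and set $Y:=X^{(3)}-\alpha^{(1)}Z^{(1)}+\alpha^{(2)}Z^{(2)}$ (the drift-plus-Poisson part of $X^{(3)}$). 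On the same space, using the trivial extension \eqref{trivial-extension} of the coefficients to $[0,T+1]$, let $(X^{(1),[k]},X^{(2),[k]},X^{(3),[k]})$ solve on $[0,T+1]$ the level-$k$ dynamics of \eqref{MFG-smooth} driven by the moving averages $Z^{(i),[k]}_t=k\int_{t-1/k}^tZ^{(i)}_s\,ds$, the same $(Q,W,N)$, and the fixed measure $\mu^{m,[k],*}$; this is well posed since $b^{(1)},b^{(2)},b^{(3)}$ are Lipschitz in the state, $Z^{(i),[k]}$ is continuous of bounded variation, and $l$ is bounded. Put $\mathbb{P}^k:=\mathrm{Law}(X^{(\cdot),[k]},Q,Z)$. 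Because $(X^{(\cdot),[k]},Z,Q,W,N)$ solves exactly the weak formulation of the level-$k$ MFG at $\mu^{m,[k],*}$, the converse direction of the level-$k$ analogue of Proposition \ref{martingale-representation} gives $\mathbb{P}^k\in\mathcal{R}^{m,[k]}(\mu^{m,[k],*})$, and $\mathbb{P}^k$ is supported on $\Omega^{m,o}$ since $Z^{(i)}\in\widetilde{\mathcal{A}}^m_{0,T}$ and $Q\in\widetilde{\mathcal{U}}_{0,T}$. By \eqref{cost-Jo-mathcalJ}, and $\mu^{m,*}_{T+1}=\mu^{m,*}_T$ (valid because $\mathbb{P}$ is supported on $\Omega^m$), it then suffices to prove $\mathbb{E}[\mathcal{G}^k]\to\mathbb{E}[\mathcal{G}]$ on the common space, where $\mathcal{G}^k$ and $\mathcal{G}$ are the integrands of $\mathcal{J}$ at $(X^{(\cdot),[k]},Q,\mu^{m,[k],*})$ and at $(X,Q,\mu^{m,*})$.

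The quantitative engine is the estimate $\int_0^{T+1}\|Z^{(i),[k]}_s-Z^{(i)}_s\|^p\,ds\to0$ a.s.: $Z^{(i),[k]}\to Z^{(i)}$ in $(\widetilde{\mathcal{A}}_{0,T+1},M_1)$ a.s.\ by \cite[Theorem 12.9.3(ii)]{Whitt-2002} (cf.\ \eqref{KPP-convergence}), $M_1$-convergence of monotone paths forces Lebesgue-a.e.\ pointwise convergence, and the integrands are dominated (by $m^p$ in the present finite-fuel setting; an analogous $L^1$ domination, using $J(\mathbb{P};\mu^{m,*})<\infty$ together with coercivity, serves in the $m=\infty$ case). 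Feeding this into a Gronwall argument built on the Lipschitz continuity of $b^{(1)},b^{(2)},b^{(3)}$ — the Brownian and Poisson inputs, being identical on both sides, cancel — shows that the continuous parts $K^{(i),[k]}_\cdot:=\int_0^\cdot b^{(i)}(s,X^{(i),[k]}_s)\,ds+\int_0^\cdot\sigma^{(i)}(s)\,dW^{(i)}_s$ ($i=1,2$) and the drift-plus-Poisson part $Y^{[k]}$ of $X^{(3),[k]}$ converge uniformly on $[0,T+1]$, a.s., to $K^{(i)}$ and $Y$, and that $\mathbb{E}\int_0^{T+1}\|X^{(3),[k]}_t-X^{(3)}_t\|^p\,dt\to0$. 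I would also establish $\mu^{m,[k],*}\to\mu^{m,*}$ in $\mathcal{W}_p$, read off from \eqref{def-mu-*}, the a.s.\ convergences \eqref{step2-a.s.-1}--\eqref{step2-convergence-bar-X3k-to-bar-X-3}, \cite[Theorem 12.9.3(ii)]{Whitt-2002} and the uniform moment bounds \eqref{widetilde-C-1}; in particular $\overline{\mu}^{(i),m,[k],*}_t\to\overline{\mu}^{(i),m,*}_t$ for a.e.\ $t$ and at $t=T+1$ (monotone $M_1$-convergence gives endpoint convergence), and the time-$t$ marginals of $\mu^{m,[k],*}$ converge to those of $\mu^{m,*}$ in $\mathcal{W}_p$ for a.e.\ $t$.

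With these ingredients the limit passes termwise. From $X^{(i),[k]}_t=K^{(i),[k]}_t+\kappa^{(i)}\overline{\mu}^{(i),m,[k],*}_t+\eta^{(i)}Z^{(i),[k]}_t$ one gets $X^{(i),[k]}_t\to X^{(i)}_t$ for a.e.\ $t$, a.s.\ ($i=1,2$), and the $L^p$-in-time bound gives the same for $i=3$ along a subsequence. Hence, by continuity of $h,h',b^{(1)},b^{(2)}$ and of $f$ in $(x,\nu,u)$, the running integrands in $\mathcal{J}$ converge for a.e.\ $(t,\omega)$; the growth bounds $\mathcal{A}_3$ together with the uniform moment bounds \eqref{widetilde-C-1} — available at every exponent, since $\mu^{(1),m,[k],*},\mu^{(2),m,[k],*}$ are bounded by $m$ and the induced moments of $X^{(i),[k]}$ are controlled uniformly in $k$ — supply uniform integrability on $\Omega\times[0,T]$, so dominated convergence handles the three drift/diffusion-correction integrals and $\int_0^T\int_Uf$. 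For the terminal data, the trivial extension makes $b^{(i)},\sigma^{(i)},b^{(3)},l,\lambda$ vanish on $(T,T+1]$, so $K^{(i),[k]}$ and $Y^{[k]}$ are constant there while $Z^{(i),[k]}_{T+1}=Z^{(i)}_T$ for $k\ge1$; therefore $X^{(i),[k]}_{T+1}\to X^{(i)}_{T+1}$ a.s.\ ($i=1,2,3$), and $\int_0^{X^{(i),[k]}_{j,T+1}}h_j$ and $g(X^{(\cdot),[k]}_{T+1},\mu^{m,[k],*}_{T+1})$ converge by continuity of $y\mapsto\int_0^yh_j$ and of $g$ plus the same uniform integrability. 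Collecting the terms gives $J^o(\mathbb{P}^k;\mu^{m,[k],*})=\mathcal{J}(\mathbb{P}^k,X^{(\cdot),[k]},Q,\mu^{m,[k],*})\to\mathcal{J}(\mathbb{P},X,Q,\mu^{m,*})=J(\mathbb{P};\mu^{m,*})$.

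The step I expect to be genuinely delicate is conceptual rather than computational. One cannot hope for path-space convergence of $X^{(3),[k]}$ to $X^{(3)}$: at a time where $Z^{(1)}$, $Z^{(2)}$ and the Poisson process jump simultaneously in opposite directions, addition is not $M_1$-continuous, so no ambient path topology is available — this is exactly what forces the two-step strategy. The proposal sidesteps the issue entirely: the cost $\mathcal{J}$ feels $X^{(3)}$ only through a time integral, handled by the $L^p$-in-time Gronwall estimate which lives in $L^p([0,T+1])$ and never touches a path topology, and through the single terminal value $X^{(3)}_{T+1}$, which is pinned down because the trivial extension freezes the jump-bearing part $Y^{[k]}$ after time $T$ and the smoothed controls at $T+1$ already coincide with their time-$T$ values. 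Getting exactly these two observations right — and the parallel, easier treatment of $X^{(1),[k]},X^{(2),[k]}$, for which again only a.e.-in-$t$ and terminal convergence are invoked (the surviving jumps there being co-directional by $\kappa^{(i)}\eta^{(i)}\ge0$, so even path $M_1$-convergence would be available if wanted) — is where the argument must be careful; the rest is routine dominated convergence driven by $\mathcal{A}_3$ and the moment bounds.
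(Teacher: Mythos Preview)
Your proposal is correct and follows essentially the same route as the paper: realize $\mathbb P$ via Proposition~\ref{martingale-representation}, re-solve the level-$k$ state equations on the same probability space with the smoothed controls $Z^{(i),[k]}$ and the measure $\mu^{m,[k],*}$ while keeping the same driving noise, set $\mathbb P^k$ to be the resulting law, and pass to the limit in $\mathcal J$ using $L^p$-in-time convergence of the states together with terminal-value convergence forced by the trivial extension \eqref{trivial-extension}. One small slip: the identity $\mu^{m,*}_{T+1}=\mu^{m,*}_T$ holds because $\mu^{m,*}$ is built from $\overline{\mathbb Q}$ (equivalently from $\mathbb P^{m,*}$) via \eqref{def-mu-*} and the limit processes $\overline X,\overline Z$ are constant on $[T,T+1]$ by Step~1 of Lemma~\ref{lem:stability-martingale-2}; it has nothing to do with the particular $\mathbb P\in\mathcal R^m(\mu^{m,*})$ you started from.
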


\vspace{-5.2mm}
%\begin{proof}
%Since $X$ is $\widetilde{\mathcal D}_{0,T}(\mathbb R)$ valued, we can rewrite the cost functional as 
%\begin{equation}\label{new-cost}
%    \begin{split}
%    J(\mathbb P;\mu^{*})=&~\mathbb E^{{\mathbb P}}\left[\int_{X^{(1)}_{0-}}^{ X^{(1)}_{T+1}}h(x)\,dx-\int_0^Th( X^{(1)}_t)b^{(1)}(t, X^{(1)}_t)\,dt\right.\\
%    &~+\int_{X^{(2)}_{0-}}^{X^{(2)}_{T+1}}h(x)\,dx-\int_0^Th(X^{(2)}_t)b^{(2)}(t, X^{(2)}_t)\,dt\\
%    &~\left.+\int_0^T\int_Uf(t, X_t,\mu^*_t,u)\, Q_t(du)dt+g( X_{T+1})\right],
%    \end{split}
%\end{equation}
%where we replace the terminal cost by $g(X^{(i)}_{T+1})$. 
%For MFG$^{[k]}$, the problem is changed because $X_{T+1}\neq X_T$. But by the same method, \colorbox{green}{ we still get the existence on $\widetilde{\mathcal D}(0,T+1)\times\widetilde{\mathcal A}(0,T)$.} Moreover, for the original problem on $\widetilde{\mathcal D}(0,T)\times\widetilde{\mathcal A}(0,T)$ we have $X^{(i)}_{T+1}=X^{(i)}_T$. Thus, the new cost does not change the original problem but makes the approximation possible.
\textit{Proof.}
The admissibility of $\mathbb P$ implies the existence of $(\breve\Omega,\breve{\mathcal F},\breve{\mathbb P})$ and $(\breve X^{},\breve Q,\breve Z,\breve W,\breve N)$ such that

\vspace{-5.3mm}
\begin{equation}\label{step2-breve-X1}
    \breve X^{(i)}_\cdot=\int_0^\cdot b^{(i)}(s,\breve X^{(i)}_s,\mu^{(i),m,*}_s)\,ds+\kappa^{(i)}\overline\mu^{(i),m,*}_\cdot+\eta^{(i)}\breve Z^{(i)}_\cdot+\int_0^\cdot\sigma^{(i)}(s)\,d\breve W^{(i)}_s,\quad i=1,2,
\end{equation}

\vspace{-5.6mm}
%\begin{equation}\label{step2-breve-X2}
%    \breve X^{(2)}_\cdot=\int_0^\cdot b^{(2)}(s,\breve X^{(2)}_s)\,ds+\kappa^{(2)}\overline\mu^{(2),m,*}_\cdot+\eta^{(2)}\breve Z^{(2)}_\cdot+\int_0^\cdot\sigma^{(2)}(s)\,d\breve W^{(2)}_s,
%\end{equation}
\begin{equation}\label{step2-breve-X3}
        \breve{X}^{(3)}_\cdot=\int_0^\cdot\int_U b^{(3)}(s,\breve{X}^{(3)}_s,u)\,\breve{Q}_s(du)ds+\int_0^\cdot\int_Ul(s,u)\widetilde{\breve N}(ds,du)+\alpha^{(1)}\breve{Z}^{(1)}_\cdot-\alpha^{(2)}\breve Z^{(2)}_\cdot,
\end{equation}
and
\begin{equation}\label{app-identity-law-2}
    \breve{\mathbb P}\circ\left(\breve X,\breve Q,\breve Z\right)^{-1}=\mathbb P^{}\circ \left(X,Q,Z\right)^{-1}.
\end{equation}
%By letting $\breve{Y}=\breve X^{(3)}-\alpha^{(1)}\breve Z^{(1)}+\alpha^{(2)}\breve Z^{(2)}$, we have
%\begin{equation}\label{step2-id-breve-Y}
%    \breve{\mathbb P}\circ\left(\breve Y,\breve X^{(1)},\breve X^{(2)},\breve X^{(3)},\breve Q,\breve Z^{(1)},\breve Z^{(2)}\right)^{-1}=\mathbb P^{}\circ \left(Y,X^{(1)},X^{(2)},X^{(3)},Q,Z^{(1)},Z^{(2)}\right)^{-1},
%\end{equation}
%where $Y=X^{(3)}-\alpha^{(1)} Z^{(1)}+\alpha^{(2)} Z^{(2)}$.
Let $\breve{X}^{(1),k}$, $\breve{X}^{(2),k}$ and $\breve{X}^{(3),k}$ be the unique strong solution to the following dynamics, respectively,
\begin{equation}\label{step2-breve-X2k}
    \breve{X}^{(i),k}_\cdot=\int_0^\cdot b^{(i)}(s,\breve X^{(i),k}_s,\mu^{(i),m,[k],*}_s)\,ds+\kappa^{(i)}\overline\mu^{(i),m,[k],*}_\cdot+\eta^{(i)}\breve Z^{(i),[k]}_\cdot+\int_0^\cdot\sigma^{(i)}(s)\breve W^{(i)}_s,\quad i=1,2
\end{equation}
and
\begin{equation}\label{step2-breve-X3k}
    \breve{X}^{(3),k}_\cdot=\int_0^\cdot\int_U b^{(3)}(s,\breve{X}^{(3),k}_s,u)\,\breve{Q}_s(du)ds+\int_0^\cdot\int_Ul(s,u)\widetilde{\breve N}(ds,du)+\alpha^{(1)}\breve{Z}^{(1),[k]}_\cdot-\alpha^{(2)}\breve Z^{(2),[k]}_\cdot,
\end{equation}
where we recall $\mu^{m,[k],*}$ is the optimal mean field aggregation from Section \ref{section-finite-fuel-smooth}. 
Define

\vspace{-5.6mm}
\begin{equation}\label{app-identity-law-1}
    \mathbb P^k\circ(X^o,Q,Z)^{-1}:=\breve{\mathbb P}\circ\left(\breve X^{k},\breve Q,\breve Z\right)^{-1},
\end{equation}

\vspace{-2.6mm}
which together with the equations \eqref{step2-breve-X2k}-\eqref{step2-breve-X3k} implies the admissibility of $\mathbb P^k$, i.e., $\mathbb P^k\in\mathcal R^{m,[k]}(\mu^{m,[k],*})$. By the fact
$\breve Z^{(i),[k]}\rightarrow\breve{Z}^{(i)}\textrm{ in }(\widetilde{\mathcal D}_{0,T+1},M_1)$  $\breve{\mathbb P}$\textrm{ a.s.,}
we have the following convergence from \eqref{step2-breve-X1}-\eqref{step2-breve-X3k}
\begin{equation}\label{app-convergence-breveXk-breveX}
    \mathbb E^{\breve{\mathbb P}}\left[\int_0^{T+1}\left\|\breve X^k_t-\breve X_t\right\|^p\,dt\right]\rightarrow 0\textrm{ and }\breve X^{k}_{T+1}\rightarrow \breve X^{}_{T+1}\overset{\textrm{by }\eqref{trivial-extension}}{=}\breve X_T.
\end{equation}
Moreover, by \eqref{step2-a.s.-1}-\eqref{def-mu-*} we have

\vspace{-6.6mm}
\begin{equation}\label{eq:mu-T+1=mu_T}
	\mu^{m,[k],*}_t\rightarrow\mu^{m,*}_t\textrm{ for almost all }t\in[0,T+1]\textrm{ including }T+1\textrm{ and }\mu^{m,*}_{T+1}=\mu^{m,*}_T.
\end{equation}

\vspace{-4.6mm}
Therefore, by choosing a subsequence if necessary, we have

\vspace{-9.6mm}
\begin{align*}
    %\begin{split}
        &~J^o(\mathbb P^k;\mu^{m,[k],*})\overset{\textrm{ by }\eqref{cost-Jo-mathcalJ}}{
       =}\mathcal J(\mathbb P^k,X^o,Q,\mu^{m,[k],*})%\mathbb E^{\mathbb P^k}\left[\int_{X^{o,(1)}_{0-}}^{X^{o,(1)}_{T+1}}h(x)\,dx-\int_0^{T+1}h(X^{o,(1)}_t)\cdot b^{(1)}(t,X^{o,(1)}_t)\,dt-\frac{1}{2}\sum_{j=1}^d\int_0^{T+1}\sigma^{(1),j}(t)\cdot\sigma^{(1),j}(t)h'_j(X^{o,(1),j}_t)\,dt\right.\\
%    &~+\int_{X^{o,(2)}_{0-}}^{X^{o,(2)}_{T+1}}h(x)\,dx-\int_0^{T+1}h(X^{o,(2)}_t)\cdot b^{(2)}(t,X^{o,(2)}_t)\,dt-\frac{1}{2}\sum_{j=1}^d\int_0^{T+1}\sigma^{(2),j}(t)\cdot\sigma^{(2),j}(t)h'_j(X^{o,(2),j}_t)\,dt\\
%    &~\left.+\int_0^{T+1}\int_Uf(t,X^{o}_t,\mu^{[k],*}_t,u)\,Q^o(dt,du)+g(X^{o}_{T+1})\right]\\
    \overset{\textrm{by }\eqref{app-identity-law-1}}{=}\mathcal J(\check{\mathbb P},\check X^k,\check Q,\mu^{m,[k],*})\\
    \overset{\textrm{by }\eqref{app-convergence-breveXk-breveX},\eqref{eq:mu-T+1=mu_T}}{\rightarrow}&~\mathcal J(\check{\mathbb P},\check X,\check Q,\mu^{m,*})
    \overset{\textrm{by }\eqref{app-identity-law-2}}{=}\mathcal J(\mathbb P,X,Q,\mu^{m,*})
    \overset{\textrm{by }\eqref{cost-Jo-mathcalJ}}{=}J(\mathbb P;\mu^{m,*}).\qquad\qquad \qquad\qquad\quad\qquad \square
    %\end{split}
\end{align*}
%\end{proof}

\vspace{-2.6mm}
The following theorem shows $\mu^{m,*}$ defined in \eqref{def-mu-*} is an equilibrium of \eqref{general-MFG}.

\vspace{-1.6mm}
\begin{theorem}\label{thm:existence-m}
	Under Assumptions $\mathcal A_1$-$\mathcal A_6$, it holds that $\mathbb P^{m,*}\in\mathcal R^{m,*}(\mu^{m,*})$.
\end{theorem}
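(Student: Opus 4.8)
The plan is to combine the optimality of the approximating equilibria $\mathbb{P}^{m,[k],*}$ with the approximation Lemma~\ref{app-convergence-any-law} and a stability argument for the cost functional. Admissibility, $\mathbb{P}^{m,*}\in\mathcal{R}^m(\mu^{m,*})$, has already been obtained in Lemma~\ref{lem:stability-martingale-2}, and by \eqref{def-mu-*}--\eqref{def-P-*} the measure $\mu^{m,*}$ is precisely the tuple of marginals of $\mathbb{P}^{m,*}$; thus it only remains to prove optimality, i.e.\ $J(\mathbb{P}^{m,*};\mu^{m,*})\le J(\mathbb{P};\mu^{m,*})$ for every $\mathbb{P}\in\mathcal{R}^m(\mu^{m,*})$.

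The crucial preliminary step is the limit identity $J^o(\mathbb{P}^{m,[k],*};\mu^{m,[k],*})\to J(\mathbb{P}^{m,*};\mu^{m,*})$. By \eqref{cost-Jo-mathcalJ} one has $J^o(\mathbb{P}^{m,[k],*};\mu^{m,[k],*})=\mathcal{J}(\mathbb{P}^{m,[k],*},X^o,Q,\mu^{m,[k],*})$ and, since $\mu^{m,*}_{T+1}=\mu^{m,*}_T$ by \eqref{eq:mu-T+1=mu_T}, $J(\mathbb{P}^{m,*};\mu^{m,*})=\mathcal{J}(\mathbb{P}^{m,*},X,Q,\mu^{m,*})$. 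Using the identity-in-law \eqref{app-identity-law(3)} and the definition \eqref{def-P-*}, I would rewrite both quantities as expectations on $(\overline{\Omega},\overline{\mathbb Q})$, the first in $(\overline X^{(1),k},\overline X^{(2),k},\overline X^{(3),k},\overline Q^k,\mu^{m,[k],*})$ and the second in $(\overline X^{(1)},\overline X^{(2)},\overline X^{(3)},\overline Q,\mu^{m,*})$, and then pass to the limit $k\to\infty$ inside the expectation. The a.s.\ convergences \eqref{step2-a.s.-1} and \eqref{step2-convergence-bar-X3k-to-bar-X-3} give $\overline X^{(i),k}\to\overline X^{(i)}$ ($i=1,2,3$) and $\overline Q^k\to\overline Q$ $\overline{\mathbb Q}$-a.s., and \eqref{eq:mu-T+1=mu_T} gives $\mu^{m,[k],*}_t\to\mu^{m,*}_t$ for a.e.\ $t$ and at $t=T+1$; continuity of $h_j,h'_j,b^{(i)},a^{(i)}$ ($\mathcal{A}_1,\mathcal{A}_4$) and of $f,g$ ($\mathcal{A}_2$) then yields $\overline{\mathbb Q}$-a.s.\ convergence of the $\mathcal{J}$-integrand, the time integrals over $[0,T]$ being handled by dominated convergence from the $M_1$-pointwise-a.e.\ convergence (as in the proof of Lemma~\ref{continuity-J-mu-P}) and the $T+1$-terms by continuity together with the convergence $\overline X^{(i),k}_{T+1}\to\overline X^{(i)}_{T+1}$ obtained as in \eqref{step2-convergence-bar-X3k-to-bar-X-3} and Step~1 of the proof of Lemma~\ref{lem:stability-martingale-2}; finally, the polynomial growth bounds $\mathcal{A}_3$, the uniform-in-$k$ moment estimates of the $\overline X^{(i),k}$ (obtained as in Lemma~\ref{relative-compactness-union-control-rule}) and the finite-fuel bound on $\mathcal{W}_p(\mu^{m,[k],*},\delta_0)$ provide the uniform integrability needed to pass from a.s.\ convergence to convergence of expectations. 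This gives the limit identity, and in particular $J(\mathbb{P}^{m,*};\mu^{m,*})<\infty$.

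With this in hand the argument closes immediately. Fix $\mathbb{P}\in\mathcal{R}^m(\mu^{m,*})$; if $J(\mathbb{P};\mu^{m,*})=\infty$ there is nothing to prove, so assume it is finite. By Lemma~\ref{app-convergence-any-law} there is a sequence $\mathbb{P}^k\in\mathcal{R}^{m,[k]}(\mu^{m,[k],*})$ with $J^o(\mathbb{P}^k;\mu^{m,[k],*})\to J(\mathbb{P};\mu^{m,*})$, and since $\mathbb{P}^{m,[k],*}\in\mathcal{R}^{m,[k],*}(\mu^{m,[k],*})$ (Theorem~\ref{existence-MFG-finite-fuel}) minimizes $J^o(\,\cdot\,;\mu^{m,[k],*})$ over $\mathcal{R}^{m,[k]}(\mu^{m,[k],*})$, we have $J^o(\mathbb{P}^{m,[k],*};\mu^{m,[k],*})\le J^o(\mathbb{P}^k;\mu^{m,[k],*})$ for every $k$. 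Consequently
\[
J(\mathbb{P}^{m,*};\mu^{m,*})=\lim_{k\to\infty}J^o(\mathbb{P}^{m,[k],*};\mu^{m,[k],*})\le\liminf_{k\to\infty}J^o(\mathbb{P}^k;\mu^{m,[k],*})=J(\mathbb{P};\mu^{m,*}),
\]
so $\mathbb{P}^{m,*}$ minimizes $J(\,\cdot\,;\mu^{m,*})$ over $\mathcal{R}^m(\mu^{m,*})$, which is exactly $\mathbb{P}^{m,*}\in\mathcal{R}^{m,*}(\mu^{m,*})$.

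I expect the main obstacle to be the limit identity of the second paragraph, i.e.\ the passage to the limit in $\mathcal{J}$ along the sequence of equilibria. The delicate points are the treatment of the terms evaluated at the terminal time $T+1$ --- since $M_1$-convergence only gives convergence at a.e.\ time, one has to exploit the explicit form of the smoothed controls $Z^{[k]}$ and the trivial extension \eqref{trivial-extension} of the coefficients, as was done for $\overline X^{(3),k}$ in \eqref{step2-convergence-bar-X3k-to-bar-X-3} --- and the unboundedness of the integrands, which forces one to rely on the uniform moment bounds and the growth conditions $\mathcal{A}_3$. It is worth noting that only the ``Fatou direction'' $\liminf_k J^o(\mathbb{P}^{m,[k],*};\mu^{m,[k],*})\ge J(\mathbb{P}^{m,*};\mu^{m,*})$ is actually needed in the chain above, which is somewhat more robust than the full convergence.
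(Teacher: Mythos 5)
Your proposal is correct and follows essentially the same route as the paper: construct $\mathbb{P}^k$ via Lemma~\ref{app-convergence-any-law}, invoke the optimality of $\mathbb{P}^{m,[k],*}$, and pass to the limit in $\mathcal{J}$ using \eqref{app-identity-law(3)}, \eqref{step2-a.s.-1}, \eqref{step2-convergence-bar-X3k-to-bar-X-3} and \eqref{cost-Jo-mathcalJ} to identify $\lim_k J^o(\mathbb{P}^{m,[k],*};\mu^{m,[k],*})$ with $J(\mathbb{P}^{m,*};\mu^{m,*})$. Your elaboration of the convergence of the cost functional (terminal-time terms, uniform integrability) fills in details the paper leaves implicit, and your observation that only the Fatou direction is needed is a valid minor refinement, not a different argument.
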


\vspace{-4.6mm}
%\begin{proof}
\textit{Proof.} For any $\mathbb P\in\mathcal R^m(\mu^{m,*})$ with $J(\mathbb P;\mu^{m,*})<\infty$, let $\mathbb P^k$ be the probability measure constructed in Lemma \ref{app-convergence-any-law}. Thus, Lemma \ref{app-convergence-any-law} and optimality of $\mathbb P^{m,[k],*}$ imply

\vspace{-9.6mm}
\begin{align*}
        &~J(\mathbb P;\mu^{m,*})\overset{\textrm{by Lemma } \ref{app-convergence-any-law}}{\leftarrow} J^o(\mathbb P^k;\mu^{m,[k],*})\geq J^o(\mathbb P^{m,[k],*};\mu^{m,[k],*})
        =\mathcal J(\mathbb P^{m,[k],*},X^o,Q,\mu^{m,[k],*})\\
   &~ \overset{\textrm{by }\eqref{app-identity-law(3)}}{=}\mathcal J(\mathring{\mathbb Q},\mathring X^k,\mathring Q^k,\mu^{m,[k],*})
    \overset{\textrm{by }\eqref{step2-a.s.-1}\textrm{ and }\eqref{step2-convergence-bar-X3k-to-bar-X-3}}{\rightarrow}\mathcal J(\mathring{\mathbb Q},\mathring X,\mathring Q,\mu^{m,*})
    \overset{\textrm{by }\eqref{def-P-*},\eqref{cost-Jo-mathcalJ}}{=}J(\mathbb P^{m,*};\mu^{m,*}). ~\qquad~ \square
\end{align*}
%\end{proof}

\vspace{-10.6mm}
\section{Existence of Equilibria with General Singular Controls}\label{sec:general-control}

\vspace{-3.9mm}
In this section, we prove Theorem \ref{thm:general-control}. From Section \ref{proof}, for each $m\in(0,\infty)$ there exists an equilibrium $\mathbb P^{m,*}$ for the MFG with finite fuel constraint, i.e., $\mathbb P^{m,*}\in\mathcal R^{m,*}(\mu^{m,*})$, where $\mu^{m,*}=(\mathbb P^{m,*}\circ(Z^{(1)})^{-1},\mathbb P^{m,*}\circ(Z^{(2)})^{-1},\mathbb P^{m,*}\circ(X^{(1)})^{-1},\mathbb P^{m,*}\circ(X^{(2)})^{-1},\mathbb P^{m,*}\circ(X^{(3)})^{-1})$. In order to drop the finite fuel constraint, we need the following uniform bound for $p$ moments of $Z^{(1)}$ and $Z^{(2)}$ under $\mathbb P^{m,*}$. %This is sufficient for the relative compactness of $\{\mathbb P^{m,*}\circ (Z^{(1)})^{-1}\}_m$ and $\{\mathbb P^{m,*}\circ (Z^{(2)})^{-1}\}_m$.

\vspace{-2.3mm}
\begin{lemma}\label{lem:uniform-bound}
Under assumptions $\mathcal A_1$, $\mathcal A_2$, $\mathcal A_4$, $\mathcal A_6$ and $\mathcal A_7$,  we have the following uniform estimate
$
	\sup_m\mathbb E^{\mathbb P^{m,*}}[\|Z^{(1)}_T\|^{p}+\|Z^{(2)}_T\|^{p}]<\infty.
$
%In particular, $\{\mathbb P^{m,*} \}_m$ is relatively compact.
\end{lemma}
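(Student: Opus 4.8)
The plan is to play the optimality (fixed-point) property of $\mathbb{P}^{m,*}$ against the coercivity in the mean-field variable built into $\mathcal{A}_7$. Write $A_m:=\mathbb{E}^{\mathbb{P}^{m,*}}[\|Z^{(1)}_T\|^p+\|Z^{(2)}_T\|^p]$. Since $\mathbb{P}^{m,*}\in\mathcal{R}^{m,*}(\mu^{m,*})$ respects the finite-fuel constraint $\|Z^{(i)}_T\|\le m$, we have $A_m\le 2m^p<\infty$ and all state moments appearing below are finite for each fixed $m$, so the estimates may be carried out freely; the whole content is to make the bound independent of $m$. I would first record two elementary consequences of the fixed point: by Jensen, $\sup_t\|\overline{\mu}^{(i),m,*}_t\|=\|\mathbb{E}^{\mathbb{P}^{m,*}}[Z^{(i)}_T]\|\le A_m^{1/p}$; and, since the first two coordinates of $\mu^{m,*}_T$ are the $\mathbb{P}^{m,*}$-laws of $Z^{(1)}_T,Z^{(2)}_T$, we get $\mathcal{W}_p^p(\mu^{m,*}_T,\delta_0)\ge A_m$.

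Step 1 (upper bound by the null control). Let $\mathbb{P}^{m,0}\in\mathcal{R}^{m}(\mu^{m,*})$ be the relaxed control with $Z^{(1)}=Z^{(2)}\equiv 0$ (and any fixed measurable regular control); it is admissible for $\mu^{m,*}$, so optimality of $\mathbb{P}^{m,*}$ gives $J(\mathbb{P}^{m,*};\mu^{m,*})\le J(\mathbb{P}^{m,0};\mu^{m,*})$. The \emph{upper} growth bounds on $h,f,g$ from $\mathcal{A}_2$ and $\mathcal{A}_7$ do not involve the mean-field argument; combining them with Gronwall moment estimates for the state SDEs under $\mathbb{P}^{m,0}$ (whose only mean-field input enters linearly through $\kappa^{(i)}\overline{\mu}^{(i),m,*}$) and the bound $\|\overline{\mu}^{(i),m,*}\|_\infty\le A_m^{1/p}$, one obtains $J(\mathbb{P}^{m,0};\mu^{m,*})\le C(1+A_m)$ with $C$ independent of $m$.

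Step 2 (lower bound by coercivity). Decompose $J(\mathbb{P}^{m,*};\mu^{m,*})$ into the two potential terms $\mathbb{E}^{\mathbb{P}^{m,*}}[\mathcal{H}^{(i)}]$ with $\mathcal{H}^{(i)}:=\int_0^Th(X^{(i)}_s)\cdot d(\kappa^{(i)}\overline{\mu}^{(i)}_s+\eta^{(i)}Z^{(i)}_s)^c+\sum_{t\le T}\int_0^{\Delta X^{(i)}_t}h(X^{(i)}_{t-}+x)\,dx$, the running cost, and $\mathbb{E}^{\mathbb{P}^{m,*}}[g(X_T,\mu^{m,*}_T)]$. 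Since $\kappa^{(i)}\overline{\mu}^{(i)}+\eta^{(i)}Z^{(i)}$ is monotone and $h_j\ge -C_4$ (first line of $\mathcal{A}_7$), one has the pathwise bound $\mathcal{H}^{(i)}\ge -C_4\,d\,\|\kappa^{(i)}\overline{\mu}^{(i)}_T+\eta^{(i)}Z^{(i)}_T\|$, hence $\mathbb{E}^{\mathbb{P}^{m,*}}[\mathcal{H}^{(i)}]\ge -C A_m^{1/p}-C$; $\mathcal{A}_7$ forces $f\ge -C_4$, so the running cost is $\ge -C_4T$; and the lower coercive bound on $g$ together with $\mathcal{W}_p^p(\mu^{m,*}_T,\delta_0)\ge A_m$ gives $\mathbb{E}^{\mathbb{P}^{m,*}}[g(X_T,\mu^{m,*}_T)]\ge -C_4+C_4\mathcal{W}_p^p(\mu^{m,*}_T,\delta_0)\ge -C_4+C_4 A_m$. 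Altogether $J(\mathbb{P}^{m,*};\mu^{m,*})\ge C_4 A_m-C A_m^{1/p}-C$, and if needed the additional coercive bound $-h(x)\cdot b^{(i)}(t,x)\ge C_4\|x\|^p-C_4$ in $\mathcal{A}_7$ contributes a further nonnegative term $C_4\mathbb{E}^{\mathbb{P}^{m,*}}[\int_0^T\|X^{(i)}_s\|^p ds]$ to absorb the polynomial remainder.

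Combining Steps 1 and 2 yields $C_4 A_m-C A_m^{1/p}-C\le C(1+A_m)$, and since $p\ge2$ this bounds $A_m$ uniformly in $m$. The genuine obstacle is the self-referential structure of the argument: the comparison control is run against the \emph{equilibrium} mean field $\mu^{m,*}$ itself, so the null-control cost in Step~1 is only $O(A_m)$ rather than $O(1)$, and the scheme closes only because the coercivity of $g$ in the mean-field variable produces a matching $+C_4A_m$ on the optimal side; making this dominance precise (tracking the interplay between the coercivity constant $C_4$ and the amplification constants of the state SDEs) is the delicate point, and one must also dispose separately of the degenerate case $\eta^{(i)}=\alpha^{(i)}=0$, in which $Z^{(i)}$ influences neither the states nor the cost and the equilibrium may simply be taken with $Z^{(i)}\equiv0$.
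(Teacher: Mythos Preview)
Your comparison scheme (optimality of $\mathbb{P}^{m,*}$ against the null control, coercivity of the cost) is the same as the paper's, but the way you organize the inequality destroys the only mechanism that allows it to close. The issue is exactly the one you flag at the end and then brush aside: from $C_4 A_m - C A_m^{1/p}-C\le C(1+A_m)$ you cannot conclude anything, because the constant $C$ on the right is $C_4$ multiplied by a Gronwall amplification factor coming from the state SDEs, and there is no reason for it to be smaller than $C_4$. The statement ``since $p\ge 2$ this bounds $A_m$'' is simply false as written. The damage is done already in Step~1, where you estimate $\|\overline\mu^{(i),m,*}\|_\infty^p=\|\mathbb{E}^{\mathbb{P}^{m,*}}[Z^{(i)}_T]\|^p$ by $A_m$ via Jensen; once both sides are expressed in $A_m$ the loop is genuinely unclosable without a smallness condition on the coefficients.

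The paper avoids this by never passing to $A_m$ on the comparison side. It keeps the two quantities $\|\mathbb{E}^{\mathbb{P}^{m,*}}[Z^{(i)}_T]\|^p$ and $\mathbb{E}^{\mathbb{P}^{m,*}}[\|Z^{(i)}_T\|^p]$ separate throughout: the null-control upper bound is $J(\mathbb{P}_0;\mu^{m,*})\le C\bigl(1+\sum_i\|\mathbb{E}^{\mathbb{P}^{m,*}}[Z^{(i)}_T]\|^p\bigr)$, and on the optimal side it uses the $\|x\|^p$-coercivity of $g$ (not the $\mathcal{W}_p^p(\nu,\delta_0)$-coercivity you use), obtaining $J(\mathbb{P}^{m,*};\mu^{m,*})\ge -C+C\sum_i\mathbb{E}^{\mathbb{P}^{m,*}}[\|X^{(i)}_T\|^p]$, which via the state SDE converts to $-C+C\sum_i\mathbb{E}^{\mathbb{P}^{m,*}}[\|Z^{(i)}_T\|^p]-C\sum_i\|\mathbb{E}^{\mathbb{P}^{m,*}}[Z^{(i)}_T]\|^p$. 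The resulting inequality is therefore
\[
\mathbb{E}^{\mathbb{P}^{m,*}}[\|Z^{(1)}_T\|^p+\|Z^{(2)}_T\|^p]\le C\Bigl(1+\|\mathbb{E}^{\mathbb{P}^{m,*}}[Z^{(1)}_T]\|^p+\|\mathbb{E}^{\mathbb{P}^{m,*}}[Z^{(2)}_T]\|^p\Bigr),
\]
with the \emph{norm of the expectation} on the right rather than the expectation of the norm. This is the Jensen gap that makes the argument work; the paper then closes by splitting $\mathbb{E}[Z^{(i)}_T]=\mathbb{E}[Z^{(i)}_T\mathbf{1}_{\{Z^{(i)}_T<K\}}]+\mathbb{E}[Z^{(i)}_T\mathbf{1}_{\{Z^{(i)}_T\ge K\}}]$ and absorbing the tail piece into the left-hand side for $K$ large. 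Your route, by exploiting only the $\mathcal{W}_p^p(\nu,\delta_0)$-coercivity and then immediately applying Jensen, throws this gap away and leaves nothing to absorb.
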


\vspace{-6.6mm}
\begin{proof}
	W.l.o.g., we assume $\eta^{(i)}\alpha^{(i)}\neq 0$ for $i=1,2$ and $p$ is an integer.
Proposition \ref{martingale-representation}	yields $\mathbb Q$ and $(\ddot X,\ddot Z,\ddot Q,\ddot W,\ddot N)$ such that $\mathbb P^{m,*}=\mathbb Q\circ (\ddot X,\ddot Z,\ddot Q)^{-1}$ and

\vspace{-10.1mm}
\begin{equation*}
	\begin{split}
		\ddot X^{(i)}_t=&~\int_0^t b^{(i)}(s,\ddot X^{(i)}_s,\mu^{(i),m,*}_s)\,ds+\kappa^{(i)}\overline\mu^{(i),m,*}_t+\eta^{(i)} \ddot Z^{(i)}_t+\int_0^t\sigma^{(i)}(s)\,d\ddot W^{(i)}_s,\quad i=1,2,\\
		\ddot X^{(3)}_t=&~\int_0^t\int_U b^{(3)}(s,\ddot X^{(3)}_s,u)\ddot Q(ds,du)+\alpha^{(1)}\ddot Z^{(1)}_t-\alpha^{(2)}\ddot Z^{(2)}_t+ \int_0^t\int_U l(s,u)\widetilde{\ddot N}(ds,du).
	\end{split}
\end{equation*}

\vspace{-5.6mm}
In the following, we first establish lower bounds of $\ddot X^{(i)}$ and $J(\mathbb P^{m,*};\mu^{m,*})$, then construct a $\mathbb P_0\in\mathcal R^m(\mu^{m,*})$ and establish an upper bound of $J(\mathbb P_0;\mu^{m,*})$, finally we complete the proof by using the optimality of $\mathbb P^{m,*}$.

\vspace{-2.6mm}
\textbf{Step 1: lower bound of $\ddot X^{(i)}$ and $J(\mathbb P^{m,*};\mu^{m,*})$.}

\vspace{-3.6mm}
By assumption $\mathcal A_7$, it holds for each $i=1,2$ and $j=1,\cdots,d$ that

\vspace{-9.6mm}
\begin{equation*}
	\begin{split}
	|\ddot X^{(i)}_{j,t}|%=&~\left|   	\int_0^t \Big(b^{(i)}_j(s,\ddot X^{(i)}_s,\mu^{(i),m,*}_s)+C_4\Big)\,ds+\kappa^{(i)}\overline\mu^{(i),m,*}_{j,t}+\eta^{(i)} \ddot Z^{(i)}_{j,t}-C_4t+\left(\int_0^t\sigma^{(i)}(s)\,d\ddot W^{(i)}_s\right)_j						\right|\\
	\geq &~\left|   	\int_0^t \Big(b^{(i)}_j(s,\ddot X^{(i)}_s,\mu^{(i),m,*}_s)+C_4\Big)\,ds+\kappa^{(i)}\overline\mu^{(i),m,*}_{j,t}+\eta^{(i)} \ddot Z^{(i)}_{j,t}	\right|-C_4t-\left|\left(\int_0^t\sigma^{(i)}(s)\,d\ddot W^{(i)}_s\right)_j			\right|		\\
	\geq&~   	\int_0^t C_4|\ddot X^{(i)}_{j,s}|\,ds+\int_0^tC_4\mathcal W_p(\mu^{(i),m,*}_s,\delta_0	)\,ds+\kappa^{(i)}\overline\mu^{(i),m,*}_{j,t}+\eta^{(i)} \ddot Z^{(i)}_{j,t}-C_4t-\left\|\int_0^t\sigma^{(i)}(s)\,d\ddot W^{(i)}_s			\right\|,
	\end{split}
\end{equation*}

\vspace{-4.1mm}
which implies
%\begin{equation*}
%	\begin{split}
$		-|\ddot X^{(i)}_{j,t}|\leq \int_0^t C_4(-|\ddot X^{(i)}_{j,s}|)\,ds-\int_0^tC_4\mathcal W_p(\mu^{(i),m,*}_s,\delta_0	)\,ds  - \kappa^{(i)}\overline\mu^{(i),m,*}_{j,t}-\eta^{(i)} \ddot Z^{(i)}_{j,t}+C_4t+\left\|\int_0^t\sigma^{(i)}(s)\,d\ddot W^{(i)}_s			\right\|.$
%	\end{split}
%\end{equation*}
Here, we recall $\left\|\int_0^t\sigma^{(i)}(s)\,d\ddot W^{(i)}_s			\right\|=\max_{j=1,\cdots,d}\left|\left(\int_0^t\sigma^{(i)}(s)\,d\ddot W^{(i)}_s\right)_j			\right|$.
Gr\"onwall's inequality implies that

\vspace{-9.6mm}
\begin{equation*}
	\begin{split}
		-|\ddot X^{(i)}_{j,t}|\leq&~-\int_0^tC_4\mathcal W_p(\mu^{(i),m,*}_s,\delta_0	)\,ds  - \kappa^{(i)}\overline\mu^{(i),m,*}_{j,t}-\eta^{(i)} \ddot Z^{(i)}_{j,t}+C_4t+\left\|\int_0^t\sigma^{(i)}(s)\,d\ddot W^{(i)}_s			\right\| \\
		&~+\int_0^tC_4\left(  -\int_0^sC_4\mathcal W_p(\mu^{(i),m,*}_r,\delta_0	)\,dr		    - \kappa^{(i)}\overline\mu^{(i),m,*}_{j,s}-\eta^{(i)} \ddot Z^{(i)}_{j,s}+C_4s+\left\|\int_0^s\sigma^{(i)}(r)\,d\ddot W^{(i)}_r			\right\|  \right) e^{C_4(t-s)}\,ds,
	\end{split}
\end{equation*}
which further implies that

\vspace{-9.1mm}
\begin{align}\label{eq:lower-bound-j-1}
%	\begin{split}
		|\ddot X^{(i)}_{j,t}|%\geq&~  \int_0^tC_4\mathcal W_p(\mu^{(i),m,*}_s,\delta_0	)\,ds+ \kappa^{(i)}\overline\mu^{(i),m,*}_{j,t}+\eta^{(i)} \ddot Z^{(i)}_{j,t}-C_4t-\left\|\int_0^t\sigma^{(i)}(s)\,d\ddot W^{(i)}_s			\right\| \\
		%&~+\int_0^tC_4\left( \int_0^sC_4\mathcal W_p(\mu^{(i),m,*}_r,\delta_0	)\,dr+ \kappa^{(i)}\overline\mu^{(i),m,*}_{j,s}+\eta^{(i)} \ddot Z^{(i)}_{j,s}-C_4s-\left\|\int_0^s\sigma^{(i)}(r)\,d\ddot W^{(i)}_r			\right\|  \right) e^{C_4(t-s)}\,ds\\
		\geq&~ \int_0^tC_4\mathcal W_p(\mu^{(i),m,*}_s,\delta_0	)\,ds+	\kappa^{(i)}\overline\mu^{(i),m,*}_{j,t}+\int_0^tC_4\bigg(\int_0^sC_4\mathcal W_p(\mu^{(i),m,*}_r,\delta_0	)\,dr+ \kappa^{(i)}\overline\mu^{(i),m,*}_{j,s}\bigg)e^{C_4(t-s)}\,ds  \nonumber\\
		&~+\left(\eta^{(i)} \ddot Z^{(i)}_{j,t}+\int_0^tC_4\eta^{(i)} \ddot Z^{(i)}_{j,s} e^{C_4(t-s)}\,ds\right)   \nonumber\\
		&~-C_4t-\left\|\int_0^t\sigma^{(i)}(s)\,d\ddot W^{(i)}_s			\right\|-C_4\int_0^t\left(C_4s+\left\|\int_0^s\sigma^{(i)}(r)\,d\ddot W^{(i)}_r			\right\|  \right) e^{C_4(t-s)}\,ds   \nonumber\\
		:=&~I^{(i)}_{1,j}(t)+I^{(i)}_{2,j}(t)-I_3^{(i)}(t).
%	\end{split}
\end{align}

\vspace{-4.6mm}
Define $\Omega_1=\left\{ \omega:  I^{(i)}_{3}(t)>I^{(i)}_{1,j}(t) + I^{(i)}_{2,j}(t) \right\}$ and $\Omega_2=\left\{\omega:	I^{(i)}_{1,j}(t)+ I^{(i)}_{2,j}(t) \geq I^{(i)}_{3}(t)	\right\}$. On $\Omega_2$, the r.h.s. of \eqref{eq:lower-bound-j-1} is nonnegative. Taking $|\cdot|^p$ on both sides, we get

\vspace{-9.3mm}
\begin{equation}\label{eq:lower-bound-j-2}
	\begin{split}
	|\ddot X^{(i)}_{j,t}|^p		\geq&~ \{I^{(i)}_{1,j}(t)+I^{(i)}_{2,j}(t)\}^p1_{\Omega_2}+(-I^{(i)}_{3}(t))^p1_{\Omega_2}+\sum_{n=1}^{p-1} \begin{pmatrix}
				p \\
				n
			\end{pmatrix} \{I^{(i)}_{1,j}(t)+I^{(i)}_{2,j}(t)\}^{p-n}(-I^{(i)}_{3}(t))^n1_{\Omega_2}.
	\end{split}
\end{equation}

\vspace{-5.6mm}
On $\Omega_1$ it holds that $I^{(i)}_{3}(t)^p	1_{\Omega_1}\geq \{I^{(i)}_{1,j}(t)+I^{(i)}_{2,j}(t)	\} ^p1_{\Omega_1}$, which together with \eqref{eq:lower-bound-j-2} implies 
%\begin{equation*}
%	\begin{split}
	$	|\ddot X^{(i)}_{j,t}|^p + I^{(i)}_{3}(t)^p	1_{\Omega_1} \geq \{	I^{(i)}_{1,j}(t)+I^{(i)}_{2,j}(t)	\}^p + (-I^{(i)}_{3}(t))^p1_{\Omega_2}+\sum_{n=1}^{p-1} \begin{pmatrix}
			p \\
			n
		\end{pmatrix} \{I^{(i)}_{1,j}(t)+I^{(i)}_{2,j}(t)\}^{p-n}(-I_{3}(t))^n1_{\Omega_2}$,
%	\end{split}
%\end{equation*} 
which further implies that by moving $I_3^{(i)}(t)^p1_{\Omega_1}$ to the r.h.s.

\vspace{-8.6mm}
\begin{equation}\label{eq:lower-bound-j-3}
	\begin{split}
		|\ddot X^{(i)}_{j,t}|^p  %\geq  &~\{	I^{(i)}_{1,j}(t)+I^{(i)}_{2,j}(t)	\}^p-I^{(i)}_{3}(t)^p-\sum_{n=1}^{p-1} \begin{pmatrix}
%			p \\
%			n
%		\end{pmatrix} \{I^{(i)}_{1,j}(t)+I^{(i)}_{2,j}(t)	\}^{p-n}
%	I^{(i)}_{3}(t)^n\\
	\geq&~	I^{(i)}_{1,j}(t)^p+I^{(i)}_{2,j}(t)^p-I^{(i)}_{3}(t)^p-\sum_{n=1}^{p-1} \begin{pmatrix}
		p \\
		n
	\end{pmatrix} \{I^{(i)}_{1,j}(t)+I^{(i)}_{2,j}(t)	\}^{p-n}
	I^{(i)}_{3}(t)^n.
	\end{split}
\end{equation}

\vspace{-5.1mm}
By taking maximum over $j=1,\cdots,d$, one has

\vspace{-8.6mm}
\begin{equation}\label{eq:lower-bound-max}
	\begin{split}
		\|\ddot X^{(i)}_t\|^p\geq&~  	\|I^{(i)}_{1}(t)\|^p-I^{(i)}_3(t)^p-\sum_{n=1}^{p-1} \begin{pmatrix}
			p \\
			n
		\end{pmatrix} \left\{	\|I^{(i)}_{1}(t)\|+\|I^{(i)}_{2}(t)\|	\right\}^{p-n}I^{(i)}_3(t)^n,
	\end{split}
\end{equation}

\vspace{-5.6mm}
where $\|I^{(i)}_1(t)\|=\max_{j=1,\cdots,d}I^{(i)}_{1,j}(t)$. 
By assumptions $\mathcal A_3$, $\mathcal A_4$ and $\mathcal A_7$,
%\begin{align*}
%	\begin{split}
%		J(\mathbb P^{m.*};\mu^{m,*})%\geq&~ -2C_4d-3C_4T-C_4-\frac{C_2}{2}\sum_{i=1}^2\sum_{j=1}^d\mathbb E^{\mathbb P^{m,*}}\left[	\int_0^T |a_{jj}^{(i)}(t)| (1+|X^{(i)}_{j,t}|^{p-1})\,dt		\right]\\
%		&~+C_4\sum_{i=1}^2\sum_{j=1}^d\mathbb E^{\mathbb P^{m,*}}[ |X^{(i)}_{j,T}|^p ]+C_4\sum_{i=1}^2\mathbb E^{\mathbb P^{m,*}}\left[	\int_0^T \| X^{(i)}_t\|^p	\right]+C_4\sum_{i=1}^2\int_0^T\mathcal W_p^p( \mu^{(i),m,*}_t,\delta_0	)\,dt\\
%		&~+C_4\mathbb E^{\mathbb P^{m,*}}\left[	\int_0^T\|X_t\|^p\,dt	\right]+C_4\int_0^T\mathcal W^p_p(\mu^{m,*}_t,\delta_0)\,dt\\
%		&~+C_4\mathbb E^{\mathbb P^{m,*}}[ \|  X_T  \|^p ]   +C_4\mathcal W_p^p(	\mu^{m,*}_T,\delta_0	)\\
%		\geq&~ -2C_4d-3C_4T-C_4-\frac{C_2}{2}\sum_{i=1}^2\sum_{j=1}^d\mathbb E^{\mathbb P^{m,*}}\left[	\int_0^T|a_{jj}^{(i)}(t)|(1+ |X^{(i)}_{j,t}|^{p-1})\,dt		\right]\\
%				&~+C_4\sum_{i=1}^2\sum_{j=1}^d\mathbb E^{\mathbb P^{m,*}}[ |X^{(i)}_{j,T}|^p ]+C_4\sum_{i=1}^2\mathbb E^{\mathbb P^{m,*}}\left[	\int_0^T \| X^{(i)}_t\|^p\,dt	\right]+C_4\mathbb E^{\mathbb P^{m,*}}\left[\int_0^T\max_{i=1,2}\|X^{(i)}_t\|^p\,dt		\right]\\
%				&~+C_4\sum_{i=1}^2\int_0^T\mathcal W_p^p( \mu^{(i),m,*}_t,\delta_0	)\,dt
%				+C_4\int_0^T\mathcal W^p_p(\mu^{m,*}_t,\delta_0)\,dt\\
%				&~+C_4\mathbb E^{\mathbb P^{m,*}}\left[\max_{i=1,2} \|X^{(i)}_T\|^p		\right]+C_4\mathcal W_p^p(	\mu^{m,*}_T,\delta_0	),
%	\end{split}
%\end{align*}
%where in the second inequality we drop the terms involving $\|X^{(3)}\|^p$. 
\eqref{eq:lower-bound-j-3} and \eqref{eq:lower-bound-max}, it implies that

\vspace{-9.6mm}
\begin{align}\label{eq:lower-bound-J}
%	\begin{split}
		&~J(\mathbb P^{m,*};\mu^{m,*})  \nonumber\\
		\geq &~ -2C_4d-3C_4T-C_4-\frac{C_2}{2}\sum_{i=1}^2\sum_{j=1}^d\mathbb E^{\mathbb P^{m,*}}\left[	\int_0^T|a_{jj}^{(i)}(t)|(1+ |X^{(i)}_{j,t}|^{p-1})\,dt		\right]  \nonumber\\
		&~+C_4\sum_{i=1}^2\sum_{j=1}^d\left(	I^{(i)}_{1,j}(T)^p+\mathbb E^{\mathbb P^{m,*}} \left[	  I^{(i)}_{2,j}(T)^p-I_3^{(i)}(T)^p -\sum_{n=1}^{p-1}	  \begin{pmatrix}
			p \nonumber\\
			n
		\end{pmatrix} \left\{	I^{(i)}_{1,j}(t)+I^{(i)}_{2,j}(t)	\right\}^{p-n}I^{(i)}_3(t)^n		\right]\right)    \nonumber\\
		&~+C_4\sum_{i=1}^2\left(	\int_0^T\|I^{(i)}_{1}(t)\|^p\,dt -\mathbb E^{\mathbb P^{m,*}}\left[	\int_0^T	I^{(i)}_3(t)^p+\sum_{n=1}^{p-1} \begin{pmatrix}
			p \\
			n
		\end{pmatrix} \left\{	\|I^{(i)}_{1}(t)\|+\|I^{(i)}_{2}(t)\|	\right\}^{p-n}I^{(i)}_3(t)^n		\,dt		\right]		\right)   \nonumber\\
		&~+C_4\left(	\int_0^T\max_{i=1,2}\|I_1^{(i)}(t)\|^p\,dt	-\mathbb E^{\mathbb P^{m,*}}\left[	\int_0^T \max_{i=1,2}\left\{	I^{(i)}_3(t)^p+\sum_{n=1}^{p-1} \begin{pmatrix}
			p \\
			n
		\end{pmatrix} \left\{	\|I^{(i)}_{1}(t)\|+\|I^{(i)}_{2}(t)\|	\right\}^{p-n}I^{(i)}_3(t)^n			\right\}   \,dt			\right]			\right)		\nonumber\\
	&~+C_4\left(	\max_{i=1,2}\|I^{(i)}_{1}(T)\|^p-\mathbb E^{\mathbb P^{m,*}}\left[\max_{i=1,2}\left\{I^{(i)}_3(T)^p+\sum_{n=1}^{p-1} \begin{pmatrix}
		p \\
		n
	\end{pmatrix} \left\{	\|I^{(i)}_{1}(T)\|+\|I^{(i)}_{2}(T)\|	\right\}^{p-n}I^{(i)}_3(T)^n\right\}\right]		\right)  \nonumber\\
	&~+C_4\sum_{i=1}^2\int_0^T\mathcal W_p^p(	\mu^{(i),m,*}_t,\delta_0	)		\,dt+C_4\int_0^T\mathcal W_p^p(\mu^{m,*}_t,\delta_0)\,dt + C_4\mathcal W_p^p(\mu^{m,*}_T,\delta_0).
%	\end{split}
\end{align}

\vspace{-4.1mm}
\textbf{Step 2: construction of $\mathbb P_0$ and upper bound of $J(\mathbb P_0;\mu^{m,*})$.}

\vspace{-2.6mm}
Choose $u_0\in U$ and a probability measure $\mathbb P$ on some probability space that is large enough to support two Brownian motions $\widehat W^{(1)}$ and $\widehat W^{(2)}$ and a Poisson process $\widehat N$ with intensity $\lambda$. Define $\breve X^{(i)}$, $i=1,2,3$ as the unique strong solutions to the following SDEs:
%
%\vspace{-10.6mm}
%\begin{equation*}
%	\begin{split}
		$\breve X^{(i)}_t=\int_0^t b(s,\breve X^{(i)}_s,\mu^{(i),m,*}_s)\,ds+\kappa^{(i)}\overline\mu^{(i),m,*}_t+\int_0^t\sigma^{(i)}_s\,d\widehat W^{(i)}_s,~ i=1,2,$ and 
		$\breve X^{(3)}_t=\int_0^tb^{(3)}(s,\breve X^{(3)}_s,u_0)\,ds+\int_0^t l(s,u_0)\widetilde{\widehat N}(ds).$
%	\end{split}
%\end{equation*}
%
%\vspace{-5.1mm}
Define $\mathbb P_0:=\mathbb P\circ (\breve X,\breve Z,\breve Q)^{-1}$, where $\breve Z\equiv 0$ and $\breve Q(dt,du)\equiv\delta_{u_0}(du)dt$. Then $\mathbb P_0\in\mathcal R^m(\mu^{m,*})$ and $\mathbb P_0(Z= 0,Q=\delta_{u_0}(du)dt)=1$. 
%
%\vspace{-2.6mm}
By assumption $\mathcal A_7$, we have
%\begin{equation*}
%	\begin{split}
$		|\breve X^{(i)}_{j,t}|		\leq 	\int_0^t C_4\left( 1+|\breve X^{(i)}_{j,t}| +\mathcal W_p(\mu^{(i),m,*}_s,\delta_0)		\right)\,ds +\kappa^{(i)} \overline \mu^{m,*}_t+\left\|	\int_0^t\sigma^{(i)}_s\,d\widehat W^{(i)}_s		\right\|.$
%	\end{split}
%\end{equation*}
Gr\"onwall's inequality implies that

\vspace{-8.6mm}
\begin{equation}\label{eq:upper-bound-j}
	\begin{split}
		|\breve X_{j,t}^{(i)}|\leq&~C_4\int_0^t\mathcal W_p(\mu^{(i),m,*}_s,\delta_0)\,ds+ \kappa^{(i)} \overline \mu^{m,*}_{j,t}+C_4t+\left\|	\int_0^t\sigma^{(i)}_s\,d\widehat W^{(i)}_s		\right\| \\ &~+C_4\int_0^t\left( C_4\int_0^s\mathcal W_p(\mu^{(i),m,*}_r,\delta_0)\,dr+\kappa^{(i)} \overline \mu^{m,*}_{j,s}+C_4s+\left\|	\int_0^s\sigma^{(i)}_r\,d\widehat W^{(i)}_r		\right\|  \right)e^{C_4(t-s)}\,ds\\
		=&~  I^{(i)}_{1,j}(t) + I^{(i)}_3(t),\qquad i=1,2,
	\end{split}
\end{equation}
where $I_{1,j}^{(i)}$ and $I_3^{(i)}$ are defined as in \textbf{Step 1}. The inequality \eqref{eq:upper-bound-j} further implies that by taking maximum over $j=1,\cdots,d$
\begin{equation}\label{eq:upper-bound-max}
	\|\breve X^{(i)}_t\|\leq \|I^{(i)}_1(t)\| + I_3^{(i)}(t),\qquad i=1,2.
\end{equation}
Moreover, standard argument implies $\mathbb E^{\mathbb P_0}\left[\sup_{0\leq t\leq T}\|X^{(3)}_t\|^p\right]+\mathbb E^{\mathbb P_0}\left[\int_0^T\|X^{(3)}_t\|^p\,dt\right]\leq C<\infty$.
By assumptions $\mathcal A_3$, $\mathcal A_4$, $\mathcal A_7$, \eqref{eq:upper-bound-j} and \eqref{eq:upper-bound-max}, we have

\vspace{-9.6mm}
\begin{align}\label{general-control-estimate-J0}
%	\begin{split}
	&~J(\mathbb P_0;\mu^{m,*})  
%	\leq&~ 2C_4d+3C_4T+C_4 +\frac{C_2}{2}\sum_{i=1}^2\sum_{j=1}^d\mathbb E^{\mathbb P_0}\left[	\int_0^T |a_{jj}^{(i)}(t)| (1+|X_{j,t}^{(i)}|^{p-1})\,dt	\right]\\
%	&~+\sum_{i=1}^2\sum_{j=1}^dC_4\mathbb E^{\mathbb P_0}\left[	 |X^{(i)}_{j,T}|^p	\right] +C_4\sum_{i=1}^2\mathbb E^{\mathbb P_0}\left[\int_0^T\|X^{(i)}_t\|^p\,dt	\right]+C_4\mathbb E^{\mathbb P_0}\left[	 \int_0^T\|X_t\|^p\,dt		\right]\\
%	&~ +C_4\mathbb E^{\mathbb P_0}[\|X_T\|^p] +C_4\sum_{i=1}^2\int_0^T\mathcal W_p^p(	\mu^{(i),m,*}_t,\delta_0	)		\,dt +C_4\int_0^T\mathcal W_p^p(\mu^{m,*}_t,\delta_0)\,dt+C_4\mathcal W_p^p(\mu^{m,*}_T,\delta_0)\\
\leq  2C_4d+3C_4T+C_4 +\frac{C_2}{2}\sum_{i=1}^2\sum_{j=1}^d\mathbb E^{\mathbb P_0}\left[	\int_0^T |a_{jj}^{(i)}(t)| (1+|X_{j,t}^{(i)}|^{p-1})\,dt	\right]   \nonumber\\
	&~+C_4\sum_{i=1}^2\sum_{j=1}^d\left(	I_{1,j}^{(i)}(T)^p	+ \mathbb E^{\mathbb P_0}\left[I^{(i)}_3(T)^p +\sum_{n=1}^{p-1}\begin{pmatrix}	p\\ n	 \end{pmatrix} I_{1,j}^{(i)}(T)^{p-n}I_3^{(i)}(T)^n		\right] \right)    \nonumber\\
	&~+C_4 \sum_{i=1}^2\left(\int_0^T \|I_1^{(i)}(t)\|^p\,dt +\mathbb E^{\mathbb P_0}\left[\int_0^T I_3^{(i)}(t)^p	+ \sum_{n=1}^{p-1}\begin{pmatrix}	p\\ n	 \end{pmatrix} \|I_{1}^{(i)}(t)\|^{p-n}I_3^{(i)}(t)^n	\,dt\right]	\right)		\nonumber\\
	&~+C_4\left(	\int_0^T\max_{i=1,2}\|I_1^{(i)}(t)\|^p\,dt		+\mathbb E^{\mathbb P_0}\left[		\int_0^T \max_{i=1,2}\left\{I_3^{(i)}(t)^p	+ \sum_{n=1}^{p-1}\begin{pmatrix}	p\\ n	 \end{pmatrix} \|I_{1}^{(i)}(t)\|^{p-n}I_3^{(i)}(t)^n\right\}	\,dt		\right]	\right)			\nonumber\\
	&~+C_4\left(\max_{i=1,2} \| I_1^{(i)}(T) \|^p	 +\mathbb E^{\mathbb P_0}\left[ \max_{i=1,2}\left\{ I_3^{(i)}(T)^p +	\sum_{n=1}^{p-1}\begin{pmatrix}	p\\ n	 \end{pmatrix} \|I_{1}^{(i)}(T)\|^{p-n}I_3^{(i)}(T)^n \right\}	\right]		\right)      \nonumber\\
	&~+C_4\sum_{i=1}^2\int_0^T\mathcal W_p^p(	\mu^{(i),m,*}_t,\delta_0	)		\,dt+C_4\int_0^T\mathcal W_p^p(\mu^{m,*}_t,\delta_0)\,dt+C_4\mathcal W_p^p(\mu^{m,*}_T,\delta_0)+C_4C.
%	\end{split}
\end{align}
%where we used \eqref{eq:upper-bound-j} and \eqref{eq:upper-bound-max} in the second inequality.

\vspace{-4.1mm}
\textbf{Step 3: complete the proof by the optimality of $\mathbb P^{m,*}$.}

\vspace{-2.1mm}
From \eqref{eq:lower-bound-J}, \eqref{general-control-estimate-J0} and $J(\mathbb P^{m,*};\mu^{m,*})\leq J(\mathbb P_0;\mu^{m,*})$, we can see that terms with $\sum_{j=1}^d|I^{(i)}_{1,j}|^p$, $\int_0^T\|I_1^{(i)}(t)\|^p\,dt$, $\|I_1^{(i)}(T)\|^p$, $\sum_{i=1}^2\int_0^T\mathcal W_p^p(	\mu^{(i),m,*}_t,\delta_0	)		\,dt$, $\int_0^T\mathcal W_p^p(\mu^{m,*}_t,\delta_0)\,dt$ and $\mathcal W_p^p(\mu^{m,*}_T,\delta_0)$ cancel out. Note that all terms with $I_3^{(i)}$ are bounded uniformly in $m$. Using H\"older's inequality to terms with $\|I_2^{(i)}\|^{p-n}(I_3^{(i)})^n$ in \eqref{eq:lower-bound-J}, we obtain a positive constant $C$ that is independent of $m$, such that

\vspace{-9.6mm}
\begin{equation*}
	\begin{split}
	\sum_{i=1}^2\mathbb E^{\mathbb P^{m,*}}[\|Z^{(i)}_T\|^p	]\leq &~ C\left(1+ \sum_{i=1}^2 \sum_{n=1}^{p-1}	\mathbb E^{\mathbb P^{m,*}}[ \|Z_T^{(i)}\|^{p-n} ]+\sum_{i=1}^2 \sum_{n=1}^{p-1}	\mathbb E^{\mathbb P^{m,*}}[ \|(Z_T^{(i)})^p\| ]^{\frac{p-n}{p}}			\right)\\
	\leq&~ C\left(1+\sum_{i=1}^2 \sum_{n=1}^{p-1}	\mathbb E^{\mathbb P^{m,*}}[ \|(Z_T^{(i)})^p\| ]^{\frac{p-n}{p}}			\right).
	\end{split}
\end{equation*}
If $\mathbb E^{\mathbb P^{m,*}}[ \|Z_T^{(i)}\|^p ]\geq (4Cp)^{\frac{p}{n}}$, it holds that $\mathbb E^{\mathbb P^{m,*}}[ \|Z^{(i)}_T\|^p ]^{\frac{p-n}{p} }\leq \frac{1}{4Cp}\mathbb E^{\mathbb P^{m,*}}[ \|Z^{(i)}_T\|^p ]$, which implies that $\mathbb E^{\mathbb P^{m,*}}[ \|Z^{(i)}_T\|^p ]^{\frac{p-n}{p} }$$\leq  (4Cp)^{\frac{p-n}{n}} + \frac{1}{4Cp}\mathbb E^{\mathbb P^{m,*}}[ \|Z^{(i)}_T\|^p ]$. Thus, we have that
$\sum_{i=1}^2\mathbb E^{\mathbb P^{m,*}}[\|Z^{(i)}_T\|^{p}]\leq C\Big(1+2\sum_{n=1}^{p-1}(4Cp)^{\frac{p-n}{n}}\Big)+\frac{1}{4}\sum_{i=1}^2	\mathbb E^{\mathbb P^{m,*}}[\|Z^{(i)}_T\|^{p}],
$
which implies the desired result.%uniform boundedness of $\sum_{i=1}^2	\mathbb E^{\mathbb P^{m,*}}[\|Z^{(1)}_T\|^{p}]$.
%The relative compactness is obtained by the same arguments as in Lemma \ref{relative-compactness-union-control-rule}.
\end{proof}

\vspace{-3.6mm}
Recall
$
	Y=X^{(3)}-\alpha^{(1)}Z^{(1)}+\alpha^{(2)}Z^{(2)}.
$
By Lemma \ref{lem:uniform-bound} and the same arguments as Lemma \ref{relative-compactness-union-control-rule}, the sequence $\{\mathbb P^{m,*}\circ(X^{(1)},X^{(2)},Q,Z^{(1)},Z^{(2)},Y)^{-1}\}_m$ is relatively compact in $\mathcal W_{p'}$ for any $1<p'<p$. Denote by $\breve{\mathbb P}^*$ the weak limit. Skorokhod representation implies  the existence of $(\breve\Omega,\breve{\mathcal F},{\mathbb Q})$ and two tuples of stochastic processes $(\breve X^{(1)},\breve X^{(2)},\breve Q,\breve Z^{(1)},\breve Z^{(2)},\breve Y)$  and $(X^{(1),m},X^{(2),m},Q^m,Z^{(1),m},Z^{(2),m},Y^m)$ such that

\vspace{-7.6mm}
\begin{equation}\label{general-control-Xm-to-Xbar}
	\left\{\begin{split}
		&~\mathbb Q\circ(X^{(1),m},X^{(2),m},Q^m,Z^{(1),m},Z^{(2),m},Y^m)^{-1}	=\mathbb P^{m,*}\circ(X^{(1)},X^{(2)},Q,Z^{(1)},Z^{(2)},Y)^{-1},\\
		&~\mathbb Q\circ(\breve X^{(1)},\breve X^{(2)},\breve Q,\breve Z^{(1)},\breve Z^{(2)},\breve Y)^{-1}=\breve{\mathbb P}^*,\\
		&~(X^{(1),m},X^{(2),m},Q^m,Z^{(1),m},Z^{(2),m},Y^m)\rightarrow (\breve X^{(1)},\breve X^{(2)},\breve Q,\breve Z^{(1)},\breve Z^{(2)},\breve Y)\quad\mathbb Q\textrm{ a.s.}.
	\end{split}\right.
\end{equation}

\vspace{-4.6mm}
Let $ X^{(3),m}=Y^m+\alpha^{(1)}Z^{(1),m}-\alpha^{(2)}Z^{(2),m}$ and $\breve X^{(3)}=\breve Y+\alpha^{(1)}\breve Z^{(1)}-\alpha^{(2)}\breve Z^{(2)}$. Consequently, we have
$
	\mathbb P^{m,*}=\mathbb Q\circ(X^{(1),m},X^{(2),m},X^{(3),m},Q^m,Z^{(1),m},Z^{(2),m})^{-1}.
$
Define the candidate of the equilibrium as

\vspace{-5.6mm}
\begin{equation}\label{general-control-candidate-equilibrium}
	\mathbb P^*=\mathbb Q\circ (\breve X,\breve Q,\breve Z)^{-1},~\mu^{(i),*}=\mathbb P^*\circ(Z^{(i)})^{-1}, ~i=1,2,~ \mu^{(j),*}=\mathbb P^*\circ (X^{(j)})^{-1},~j=1,2,3.
\end{equation}

\vspace{-3.6mm}
For each constant $K$ define $J_K(\mathbb P;\mu)$ the same as $J(\mathbb P;\mu)$ in Lemma \ref{lem-new-cost} but with $h_j$, $h\cdot b^{(i)}$, $a^{(i)}_{jj}h'_j$, $f$ and $g$ replaced by $h_j\wedge K$, $(h\cdot b^{(i)})\wedge K$, $(a^{(i)}_{jj}h'_j)\wedge K$, $f\wedge K$ and $g\wedge K$. Then from Lemma \ref{lem-new-cost}, \eqref{general-control-Xm-to-Xbar} and \eqref{ass:convergence-lower-order} in assumption $\mathcal A_7$ we have
$
\lim_{m\rightarrow\infty}J(\mathbb P^{m,*};\mu^{m,*})\geq \lim_{m\rightarrow\infty}J_K(\mathbb P^{m,*};\mu^{m,*}) =J_K(\mathbb P^*;\mu^{*}).
$
Letting $K$ go to infinity, monotone convergence and \eqref{ass:coercive} in assumption $\mathcal A_7$ imply

\vspace{-5.6mm}
\begin{equation}\label{general-control-lsc-J}
	\lim_{m\rightarrow\infty}J(\mathbb P^{m,*};\mu^{m,*})\geq J(\mathbb P^*;\mu^{*}).
\end{equation}

\vspace{-3.6mm}
The same argument as in Lemma \ref{lem:uniform-bound} implies $\mathbb E^{\mathbb P^*}[(Z^{(i)}_T)^p]<\infty$, which further implies $\mathbb P^*\in\mathcal P_p(\Omega)$. It can be checked by the same arguments as in Proposition \ref{admissibility-limit-P-proposition} and Lemma \ref{lem:stability-martingale-2} that $\mathbb P^*\in\mathcal R^\infty(\mu^*)$. 
Together with \eqref{general-control-lsc-J}, the next theorem concludes this section.
\begin{theorem}
	Under $\mathcal A_1-\mathcal A_7$, the probability measures defined in \eqref{general-control-candidate-equilibrium} is a relaxed solution to MFG \eqref{general-MFG} with general singular controls, i.e., $J(\mathbb P^*;\mu^*)=\sup_{\mathbb P\in\mathcal R^\infty(\mu^*)}J(\mathbb P;\mu^*)$.
\end{theorem}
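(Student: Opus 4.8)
The plan is to prove the nontrivial half of the asserted identity, namely
$J(\mathbb P;\mu^*)\le J(\mathbb P^*;\mu^*)$ for every $\mathbb P\in\mathcal R^\infty(\mu^*)$; since $\mathbb P^*\in\mathcal R^\infty(\mu^*)$ has already been verified (via Proposition \ref{admissibility-limit-P-proposition} and Lemma \ref{lem:stability-martingale-2}), the reverse inequality $J(\mathbb P^*;\mu^*)\le\sup_{\mathbb P\in\mathcal R^\infty(\mu^*)}J(\mathbb P;\mu^*)$ is automatic, and the two combine to the claim. When $J(\mathbb P;\mu^*)=-\infty$ there is nothing to prove, so the analysis concentrates on the generic case $-\infty<J(\mathbb P;\mu^*)<\infty$, in which the coercivity argument of Lemma \ref{lem:uniform-bound} yields $\mathbb E^{\mathbb P}[\|Z^{(i)}_T\|^p]<\infty$, $i=1,2$. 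The strategy is the one already used for the final theorem of Section \ref{sec:approximation}: route $\mathbb P$ through the fuel-constrained problems by approximating it, for each level $m$, by an admissible competitor for the fuel-$m$ equilibrium aggregate $\mu^{m,*}$, invoke the optimality of $\mathbb P^{m,*}$ there, and pass to the limit $m\to\infty$ with the stability estimate \eqref{general-control-lsc-J}.

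For the approximation I would imitate Lemma \ref{app-convergence-any-law}, with the fuel truncation $Z\mapsto Z\wedge m$ playing the role the smoothing $Z\mapsto Z^{[k]}$ played there. Realize $\mathbb P$ via Proposition \ref{martingale-representation} on an extension $(\breve\Omega,\breve{\mathcal F},\breve{\mathbb P})$ carrying $(\breve X,\breve Q,\breve Z,\breve W,\breve N)$ solving \eqref{resume-SDE-12}--\eqref{resume-SDE-3} with aggregate $\mu^*$. Cap each singular control componentwise, $\breve Z^{(i),m}:=\breve Z^{(i)}\wedge m$ ($i=1,2$); these remain non-decreasing and c\`adl\`ag, satisfy $\|\breve Z^{(i),m}_T\|\le m$, and $\breve Z^{(i),m}\to\breve Z^{(i)}$ in $(\widetilde{\mathcal A}_{0,T},M_1)$ $\breve{\mathbb P}$-a.s.\ as $m\to\infty$ (capping a monotone path alters it only above a level tending to $\infty$, and $\breve Z^{(i)}_T<\infty$ a.s.). Plugging $(\breve Z^m,\breve Q,\breve W,\breve N)$ and the aggregate $\mu^{m,*}$ into \eqref{resume-SDE-12}--\eqref{resume-SDE-3}, let $\breve X^m$ be the strong solution and put $\mathbb P^m:=\breve{\mathbb P}\circ(\breve X^m,\breve Q,\breve Z^m)^{-1}$; then $\breve Z^m\in\widetilde{\mathcal A}^m_{0,T}\times\widetilde{\mathcal A}^m_{0,T}$ and the martingale problems of Definition \ref{control-rule-mu} hold, so $\mathbb P^m\in\mathcal R^m(\mu^{m,*})$.

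The next step is the continuity $J(\mathbb P^m;\mu^{m,*})\to J(\mathbb P;\mu^*)$. Rewriting both costs in the $\mathcal J$-form via \eqref{cost-Jo-mathcalJ} (legitimate since $\mu^{m,*}_{T+1}=\mu^{m,*}_T$ and $\mu^*_{T+1}=\mu^*_T$ by \eqref{trivial-extension}) gives $J(\mathbb P^m;\mu^{m,*})=\mathcal J(\breve{\mathbb P},\breve X^m,\breve Q,\mu^{m,*})$ and $J(\mathbb P;\mu^*)=\mathcal J(\breve{\mathbb P},\breve X,\breve Q,\mu^*)$, where $\mu$ enters $\mathcal J$ only through its time marginals inside $f$ and $g$. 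The a.s.\ convergences $\breve Z^m\to\breve Z$ and $\mu^{m,*}\to\mu^*$ (the latter being part of \eqref{general-control-Xm-to-Xbar}, which also yields $\mu^{m,*}_t\to\mu^*_t$ for a.e.\ $t$) propagate through the state equations to give $\mathbb E^{\breve{\mathbb P}}[\int_0^T\|\breve X^m_t-\breve X_t\|^p\,dt]\to0$ and $\breve X^m_T\to\breve X_T$ $\breve{\mathbb P}$-a.s.; since $\breve Z^m\le\breve Z$ componentwise and $\mathbb E^{\breve{\mathbb P}}[\|\breve Z_T\|^p]<\infty$, the $p$-th moments of $\breve X^m$ are dominated uniformly in $m$. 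Combining this with the continuity of $f,g,h,h'$ from $\mathcal A_2$ and, to bypass uniform integrability of the terminal variables, the truncation-of-integrands device used to derive \eqref{general-control-lsc-J}, one obtains $J(\mathbb P^m;\mu^{m,*})\to J(\mathbb P;\mu^*)$. Finally, the optimality of $\mathbb P^{m,*}$ within $\mathcal R^m(\mu^{m,*})$ (the final theorem of Section \ref{sec:approximation}) together with the stability estimate \eqref{general-control-lsc-J}, upon letting $m\to\infty$ in the comparison of $J(\mathbb P^m;\mu^{m,*})$ with $J(\mathbb P^{m,*};\mu^{m,*})$, yields $J(\mathbb P;\mu^*)\le J(\mathbb P^*;\mu^*)$, completing the argument.

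I expect the main obstacle to be precisely the continuity of $\mathcal J$ along the fuel truncation: one must confirm that componentwise capping of a monotone c\`adl\`ag path at level $m$ converges in the weak $M_1$ topology and is compatible with the simultaneous convergence $\mu^{m,*}\to\mu^*$, and one must extract enough uniform integrability — from the domination $\breve Z^m\le\breve Z$ and the coercive lower bounds of $\mathcal A_7$, handled via a truncation of the integrands exactly as in the proof of \eqref{general-control-lsc-J} — to pass $\mathcal J$ to the limit despite the possibly large atoms that $\breve Z$ may place at the terminal time $T$, the same location where $\widetilde{\mathcal D}_{0,T+1}$ and the trivial extension \eqref{trivial-extension} were needed earlier. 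The remaining ingredients — admissibility $\mathbb P^*\in\mathcal R^\infty(\mu^*)$ and the limit on the $\mathbb P^{m,*}$-side — are already available from Proposition \ref{admissibility-limit-P-proposition}, Lemma \ref{lem:stability-martingale-2}, and estimate \eqref{general-control-lsc-J}.
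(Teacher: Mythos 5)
Your proposal follows essentially the same route as the paper: realize $\mathbb P$ on an extension via Proposition \ref{martingale-representation}, cap the singular controls componentwise at level $m$ to produce $\mathbb P^m\in\mathcal R^m(\mu^{m,*})$, show $J(\mathbb P^m;\mu^{m,*})\to J(\mathbb P;\mu^*)$ by dominated convergence (with integrability supplied by Lemma \ref{lem:uniform-bound} and Fatou), and close the loop with the optimality of $\mathbb P^{m,*}$ and the stability estimate \eqref{general-control-lsc-J}. One caveat: the inequality you announce as your goal, $J(\mathbb P;\mu^*)\le J(\mathbb P^*;\mu^*)$, is the reverse of what your own chain produces — since $\mathcal R^{m,*}$ is an $\operatorname{argmin}$, optimality gives $J(\mathbb P^{m,*};\mu^{m,*})\le J(\mathbb P^m;\mu^{m,*})$, and combined with \eqref{general-control-lsc-J} this yields $J(\mathbb P^*;\mu^*)\le\lim_m J(\mathbb P^m;\mu^{m,*})=J(\mathbb P;\mu^*)$, i.e.\ $\mathbb P^*$ is a minimizer; the ``$\sup$'' in the theorem's display is evidently a typo for ``$\inf$'', and your argument, read with the corrected direction, is the paper's.
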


\vspace{-7.6mm}
\begin{proof}
First, we prove for each $\mathbb P\in\mathcal R^\infty(\mu^*)$ with $J(\mathbb P;\mu^*)<\infty$, there exists $\mathbb P^m\in\mathcal R^m(\mu^{m,*})$ such that 

\vspace{-6.6mm}
\begin{equation}\label{general-control-J(Pm)-J(P)}
	\lim_{m\rightarrow\infty}J(\mathbb P^m;\mu^{m,*})=J(\mathbb P;\mu^*).
\end{equation}

\vspace{-4.1mm}
Note that $\mathbb P\in\mathcal R^\infty(\mu^*)$ yields a tuple $(\widehat X,\widehat Q,\widehat Z,\widehat W,\widehat N)$ defined on a probability space $(\widehat{\Omega},\widehat{\mathcal F},\widehat{\mathbb P})$ such that

\vspace{-9.6mm}
\begin{equation*}
	\left\{\begin{split}
		\widehat X^{(i)}_t=&~\int_0^tb^{(i)}(s,\widehat X^{(i)}_s,\mu^{(i),*}_s)\,ds+\kappa^{(i)}\overline\mu^{(i),*}_t+\eta^{(i)}\widehat Z^{(i)}_t+\int_0^t\sigma^{(i)}(s)\,d\widehat W^{(i)}_s,\quad i=1,2,\\
		\widehat X^{(3)}_t=&~\int_0^t\int_U b^{(3)}(s,\widehat{X}^{(3)}_s,u)\,\widehat{Q}_s(du)ds+\int_0^t\int_Ul(s,u)\widetilde{\widehat N}(ds,du)+\alpha^{(1)}\widehat{Z}^{(1)}_t-\alpha^{(2)}\widehat Z^{(2)}_t,\\
		\mathbb P=&~\widehat{\mathbb P}\circ(\widehat X,\widehat Q,\widehat Z)^{-1}.
	\end{split}\right.
\end{equation*}
Define 
\begin{equation*}
	\widehat Z^{(i),m}_{j,t}=\left\{ \begin{split}
			&~\widehat Z^{(i)}_{j,t},\quad \widehat Z^{(i)}_{j,t}\leq m,\\
			&~m,\quad \widehat Z^{(i)}_{j,t}>m,
	\end{split}
	\right. \quad i=1,2,\quad j=1,\cdots,d,\quad\textrm{and}\quad \mathbb P^m=\widehat{\mathbb P}\circ(\widehat X^m,\widehat Q,\widehat Z^m)^{-1},
\end{equation*}

\vspace{-4.6mm}
%and 
%$
%	\mathbb P^m=\widehat{\mathbb P}\circ(\widehat X^m,\widehat Q,\widehat Z^m)^{-1},
%$
where $\widehat X^m$ is defined as $\widehat X$ with $\widehat Z$ replaced by $\widehat Z^m$  and $\mu^*$ replaced by $\mu^{m,*}$.
%\begin{equation*}
%\left\{\begin{split}
%\widehat X^{(i),m}_t=&~\int_0^tb^{(i)}(s,\widehat X^{(i),m})\,ds+\kappa^{(i)}\overline\mu^{m,*}_s+\eta^{(i)}\widehat Z^{(i),m}_t+\int_0^t\sigma^{(i)}(s)\,d\widehat W_s^{(i)},\quad i=1,2,\\
%\widehat X^{(3),m}_t=&~\int_0^t\int_U b^{(3)}(s,\widehat{X}^{(3),m}_s,u)\,\widehat{Q}_s(du)ds+\int_0^t\int_Ul(s,u)\widetilde{\widehat N}(ds,du)+\alpha^{(1)}\widehat{Z}^{(1),m}_t-\alpha^{(2)}\widehat Z^{(2),m}_t.
%\end{split}\right.
%\end{equation*}
Obviously, $\mathbb P^m\in\mathcal R^m(\mu^{m,*})$. Next we verify the convergence \eqref{general-control-J(Pm)-J(P)}. %For each continuous time $t\in[0,T]$ of $\mu^{(i),*}$ including $T$, it holds that
%\[
%	\int_0^te^{-\rho^{(i)}(t-s)}\,d\overline\mu^{m,*}_s\rightarrow \int_0^te^{-\rho^{(i)}(t-s)}\,d\overline\mu^{*}_s.
%\]
By definitions of $\widehat X^{(i),m}$ and $\widehat X^{(i)}$ and Gr\"onwall's inequality, for a.e. $t\in[0,T]$ including $T$ and for $i=1,2$, we have that
%\begin{equation*}
%	\begin{split}
%	|\widehat X^{(i),m}_t-\widehat X^{(i)}_t|\leq&~C\left| \overline\mu^{m,*}_t-\overline\mu^{*}_t\right|+C\left| \widehat Z^{(i),m}_t-\widehat Z^{(i)}_t\right|+C\int_0^t\left| \overline\mu^{m,*}_s-\overline\mu^{*}_s\right|+\left| \widehat Z^{(i),m}_s-\widehat Z^{(i)}_s\right|\,ds\\
%	=&~C\left| \overline\mu^{m,*}_t-\overline\mu^{*}_t\right|+C\left| \widehat Z^{(i),m}_t-\widehat Z^{(i)}_t\right|1_{\{\widehat Z^{(i)}_t>m\}}+C\int_0^t\left| \overline\mu^{m,*}_s-\overline\mu^{*}_s\right|+\left| \widehat Z^{(i),m}_s-\widehat Z^{(i)}_s\right|1_{\{\widehat Z^{(i)}_s>m\}}\,ds\\
%	\leq&~C\left| \overline\mu^{m,*}_t-\overline\mu^{*}_t\right|+2C\widehat Z^{(i)}_t1_{\{\widehat Z^{(i)}_t>m\}}+C\int_0^t\left| \overline\mu^{m,*}_s-\overline\mu^{*}_s\right|+2\widehat Z^{(i)}_s1_{\{\widehat Z^{(i)}_s>m\}}\,ds\\
%	\rightarrow&~0,\textrm{ as }m\rightarrow\infty.
%	\end{split}
%\end{equation*}
$
	\| \widehat X^{(i),m}_t-\widehat X^{(i)}_t \|\rightarrow 0,
$
and for any $t\in[0,T]$,
$
	\|\widehat X^{(3),m}_t-\widehat X^{(3)}_t\|\rightarrow 0.
$
Note that $\widehat Z^m_t\leq \widehat Z_t$ componentwisely. Assumption $\mathcal A_7$ implies for $i=1,2$
\begin{align*}
%	\begin{split}
	&~\left|h(\widehat X^{(i),m}_t)\cdot b^{(i)}(t,\widehat X^{(i),m}_t,\mu^{(i),m,*}_t)-h(\widehat X^{(i)}_t)\cdot b^{(i)}(t,\widehat X^{(i)}_t,\mu^{(i),*}_t)\right|\\
	\leq&~ C\Big(1+\|\widehat X^{(i),m}_t\|^{p}+\|\widehat X^{(i)}_t\|^{p}+\mathcal W^p_p(\mu^{(i),m,*}_t,\delta_0)+\mathcal W_p^p(\mu^{(i)}_t,\delta_0)\Big)\\
%	\leq&~C\left(1+\|\overline\mu^{(i),m,*}_T\|^{p}+\|\widehat Z^{(i),m}_T\|^{p}+\|\overline\mu^{(i),*}_T\|^{p}+\|\widehat Z^{(i)}_T\|^{p}+\left\|\int_0^t\sigma^{(i)}(s)\,d\widehat W^{(i)}_s\right\|^{p}\right)\\
	\leq&~C\left(\sup_m\mathbb E^{\mathbb P^{m,*}}[\|Z^{(i)}_T\|^{p}]+\mathbb E^{\mathbb P^{*}}[\|Z^{(i)}_T\|^{p}]+\|\widehat Z^{(i)}_T\|^{p}+\left\|\int_0^t\sigma^{(i)}(s)\,d\widehat W^{(i)}_s\right\|^{p}\right),
%	\end{split}
\end{align*}

\vspace{-5.6mm}
which is integrable. Indeed, the finiteness of $\sup_m\mathbb E^{\mathbb P^{m,*}}[\|Z^{(i)}_T\|^{p}]$ is given by Lemma \ref{lem:uniform-bound}. Fatou's lemma implies $\mathbb E^{\mathbb P^{*}}[\|Z^{(i)}_T\|^{p}]<\infty$. The same argument as in Lemma \ref{lem:uniform-bound} implies $\mathbb E^{\widehat{\mathbb P}}[\|Z^{(i)}_T\|^{p}]<\infty$. 
Thus, dominated convergence yields
%
%\vspace{-6.2mm}
%\begin{equation*}
%	\begin{split}
		$\mathbb E^{\widehat{\mathbb P}}\bigg[\int_0^T\Big|h(\widehat X^{(i),m}_t)\cdot b^{(i)}(t,\widehat X^{(i),m}_t,\mu^{(i),m,*}_t)-h(\widehat X^{(i)}_t)\cdot b^{(i)}(t,\widehat X^{(i)}_t,\mu^{(i),*}_t)\Big|\,dt  \bigg]\rightarrow 0.$
%	\end{split}
%\end{equation*}
%
%\vspace{-4.6mm}
Similarly, we have the convergence of other terms in the cost. 
%\begin{equation*}
%\begin{split}
%&~\mathbb E^{\widehat{\mathbb P}}\left[\int_{0}^{\widehat X^{(1),m}_{T}}h(x)\,dx+\int_{0}^{\widehat X^{(2),m}_{T}}h(x)\,dx+\int_0^T\int_Uf(t, \widehat X^m_t,\mu^{m,*}_t,u)\, \widehat Q_t(du)dt+g( \widehat X^m_{T})\right]\\
%\rightarrow&~\mathbb E^{\widehat{\mathbb P}}\left[\int_{0}^{\widehat X^{(1)}_{T}}h(x)\,dx+\int_{0}^{\widehat X^{(2)}_{T}}h(x)\,dx+\int_0^T\int_Uf(t, \widehat X_t,\mu^{*}_t,u)\, \widehat Q_t(du)dt+g( \widehat X_{T})\right].
%\end{split}
%\end{equation*}
Finally, by \eqref{general-control-lsc-J} and the optimality of $\mathbb P^{m,*}$ w.r.t. to $\mu^{m,*}$, we have
$
		J(\mathbb P^*;\mu^*)\leq \lim_{m\rightarrow\infty}J(\mathbb P^{m,*};\mu^{m,*})\leq \lim_{m\rightarrow\infty}J(\mathbb P^m;\mu^{m,*})=J(\mathbb P;\mu^*).
$
\end{proof}

\vspace{-10.6mm}
\section{Conclusion}

\vspace{-5.6mm}
We study a class of extended MFGs with singular controls by relaxed solution method. The simultaneous jumps of singular controls in different directions make it difficult to verify the tightness. In order to establish the existence of equilibria result, we smooth the singular controls to circumvent the tightness issue and then take approximation. %The analysis in our forthcoming paper \cite{FHX2021} yields a characterization of the equilibrium for a modified version of the MFG in Section 2.1 without passive orders but with random volatility $\sigma$ and liquidation constraint.

\vspace{-6.6mm}
\begin{appendix}
\section{$\mathcal D([0,T];\mathbb R^d)$ is Polish under the Weak $M_1$ Topology}\label{app:complete}

\vspace{-5.6mm}
This section proves that $\mathcal D([0,T];\mathbb R^d)$ is a Polish space under the weak $M_1$ topology. Denote by $SM_1$ and $WM_1$ the strong and the weak $M_1$ topologies, respectively. Since $SM_1$ and $WM_1$ coincide in $\mathcal D([0,T];\mathbb R)$, we use $M_1$ to denote $SM_1$ and $WM_1$ in $\mathcal D([0,T];\mathbb R)$.

%\vspace{-1.9mm}
\begin{proposition}\label{prop:appendix}
	The space $\mathcal D([0,T];\mathbb R^d)$ is a Polish space under the weak $M_1$ topology.
\end{proposition}

\vspace{-7.6mm}
\begin{proof}
 First, it is well known that $\mathcal D([0,T];\mathbb R^d)$ is separable under $J_1$ topology; see e.g. \cite[Section 11.5]{Whitt-2002}. Thus, $\mathcal D([0,T];\mathbb R^d)$ is separable under ($S$- and $W$-) $M_1$ topology since $J_1$ is stronger than ($S$- and $W$-) $M_1$ topology. It remains to prove the topological completeness of $WM_1$. By \cite[Theorem 12.8.1]{Whitt-2002}, $\mathcal D([0,T];\mathbb R^d)$ is topologically complete under $SM_1$. In particular, this is true when $d=1$. Therefore, there is a homeomorphic mapping $f:(\mathcal D([0,T];\mathbb R),d_{S M_1}(=d_{WM_1}=d_{M_1}))\rightarrow(\mathcal D([0,T];\mathbb R),\widehat d_s)$, where $\widehat d_s$ is the complete metric on $\mathcal D([0,T];\mathbb R)$. For any Cauchy sequence $\{x_n \}\subseteq(\mathcal D([0,T];\mathbb R^d),d_{WM_1})$, i.e., $\|x_n-x_m\|_{d_{WM_1}}\rightarrow 0$, \cite[Theorem 12.5.2]{Whitt-2002} implies that $d_{M_1}(x_n^i,x^i_m)\rightarrow 0$, for any $i=1,\cdots, d$, which implies by the continuity of $f$ that $\widehat d_s(f(x^i_n),f(x^i_m))\rightarrow 0$, for any $i=1,\cdots,d$. By the completeness of $\widehat d_s$ there exists $x^i\in\mathcal D([0,T];\mathbb R)$ such that $\widehat d_s(f(x^i_n),f(x^i))\rightarrow 0$ for each $i=1,\cdots, d$, which implies by the continuity of $f^{-1}$ that $d_{M_1}(x^i_n,x^i)\rightarrow 0$. By \cite[Theorem 12.5.2]{Whitt-2002} again we have $d_{WM_1}(x_n,x)\rightarrow 0$, where $x=(x^1,\cdots, x^d)\in\mathcal D([0,T];\mathbb R^d)$.
 \end{proof}

\vspace{-8.6mm}
\section{Transformation of the Cost Functional} 
 \begin{lemma}\label{lem-new-cost}
 	
 	\vspace{-4.6mm}
 	Under assumptions $\mathcal A_1$-$\mathcal A_4$, the cost functional \eqref{cost-relaxed-control} can be rewritten as 
 	\begin{align*}
 %	\begin{split}
 	J(\mathbb P;\mu^{})=&~\mathbb E^{{\mathbb P}}\left[ \sum_{i=1}^2 \sum_{j=1}^d\int_{X^{(i)}_{j,0-}}^{ X^{(i)}_{j,T}}h_j(x)\,dx-\sum_{i=1}^2\int_0^Th( X^{(i)}_t)\cdot b^{(i)}(t, X^{(i)}_t,\mu^{(i)}_t)\,dt\right.\\
 &~\left.	-\frac{1}{2}\sum_{i=1}^2\sum_{j=1}^d\int_0^Ta^{(i)}_{jj}(t)h'_j(X^{(i)}_{j,t})\,dt
 %	&~+\sum_{j=1}^d\int_{X^{(2)}_{j,0-}}^{X^{(2)}_{j,T}}h_j(x)\,dx-\int_0^Th(X^{(2)}_t)\cdot b^{(2)}(t, X^{(2)}_t,\mu^{(2)}_t)\,dt-\frac{1}{2}\sum_{j=1}^d\int_0^Ta^{(2)}_{jj}(t)h'_j(X^{(2)}_{j,t})\,dt\\
 	+\int_0^T\int_Uf(t, X_t,\mu^{}_t,u)\, Q_t(du)dt+g( X_{T},\mu_T)\right].
% 	\end{split}
 	\end{align*}

\vspace{-5.6mm}
 \end{lemma}
% \begin{proof}
\textit{Proof.} The desired result follows from using It\^o's formula as follows and then taking expectation:
 	\begin{align*}\label{cost-transform-2}
 	%\begin{split}
 	&~\int_{X^{(1)}_{j,0-}}^{X^{(1)}_{j,T}}h_j(x)\,dx
 	=\int_0^Th_j(X^{(1)}_{j,s}) b^{(1)}_j(s,X^{(1)}_s,\mu^{(1)}_s)\,ds+\int_0^Th_j(X^{(1)}_{j,s-})\,d\left(\kappa^{(1)}\overline\mu^{(1)}_{j,s}+\eta^{(1)}Z^{(1)}_{j,s}\right)\\
 	&~\qquad\qquad  +\frac{1}{2}\int_0^Th'_j(X^{(1)}_{j,s})a^{(1)}_{jj}(s)\,ds
 	+\sum_{0\leq t\leq T}\left(\int_{X^{(1)}_{j,t-}}^{X^{(1)}_{j,t}}h_j(x)\,dx-h_j(X^{(1)}_{j,t-})\Delta X^{(1)}_{j,t}\right)+\textrm{martingale}\\
 	=&~\int_0^Th_j(X^{(1)}_{j,s})b^{(1)}_j(s,X^{(1)}_s,\mu^{(1)}_s)\,ds+\frac{1}{2}\int_0^Th'_j(X^{(1),j}_{s})a^{(1)}_{jj}(s)\,ds\\
 	&~+\int_0^Th_j(X^{(1)}_{j,s-})\,d(\kappa^{(1)}\overline\mu^{(1)}_{j,s}+\eta^{(1)}Z^{(1)}_{j,s})^c
 	+\sum_{0\leq t\leq T}\left(\int_{0}^{\Delta X^{(1)}_{j,t}}h_j(y+X^{(1)}_{j,t-})\,dy\right)+\textrm{martingale}. \qquad\qquad\square
 %	\end{split}
 	\end{align*}
% \end{proof}
\end{appendix}

\vspace{-10.6mm}

\bibliography{bib_Guanxing_thesis}

\end{document}